\newtheorem{theorem}{Theorem}[section]
\newtheorem{Assumption}[theorem]{Assumption}
\newtheorem{corollary}[theorem]{Corollary}
\newtheorem{lemma}[theorem]{Lemma}
\newtheorem{proposition}[theorem]{Proposition}
\newtheorem{remark}[theorem]{Remark}
\newenvironment{proof}[1][Proof]{\noindent\textit{#1.} }{\hfill \rule{0.5em}{0.5em}}
\numberwithin{equation}{section}
\newcommand{\R}{\mathbb{R}}
\begin{document}

\title{Large Speed Traveling Waves for the Rosenzweig-MacArthur Model with Spatial Diffusion }
\author{\textsc{Arnaud Ducrot$^{a,}${\small \thanks{Research was partially supported by CNRS and  National Natural Science Foundation of China (Grant No.11811530272)}} , Zhihua Liu$^{b,}$ {\small \thanks{Research was partially supported by CNRS and National Natural Science Foundation of China (Grant Nos. 11871007 and 11811530272) and the Fundamental Research Funds for the Central Universities.}}  and Pierre Magal$^{c,*}$} \\
$^{a}${\small \textit{Normandie Univ, UNIHAVRE, LMAH, FR-CNRS-3335, ISCN, 76600 Le Havre, France}}  \\
$^{b}${\small \textit{School of Mathematical Sciences, Beijing Normal
University,}}\\
{\small \textit{Beijing 100875, People's Republic of China }} \\
$^{c}${\small \textit{Univ. Bordeaux, IMB, UMR 5251, F-33400 Talence, France.}} \\
{\small \textit{CNRS, IMB, UMR 5251, F-33400 Talence, France.}} }

\maketitle

\begin{center}
\textit{This article is dedicated to Professor Sze-Bi Hsu.}
\end{center}

\begin{abstract}
This paper focuses on traveling wave solutions for the so-called Rosenzweig-MacArthur model with spatial diffusion. The main results of this note are concerned with the existence and uniqueness of traveling wave solution as well as periodic wave train solution in the large wave speed asymptotic.
Depending on the model parameters we more particularly study the existence and this uniqueness of a traveling wave
connecting two equilibria or connecting an equilibrium point and a periodic wave train. We also discuss the existence and uniqueness of such a periodic wave train. Our analysis is based on ordinary differential techniques by coupling the theories of invariant manifolds together with those of global attractors.
\end{abstract}

\vspace{0.1in}\noindent \textbf{Key words}. Predator prey, periodic wave train, traveling wave, Rosenzweig-MacArthur Model.

\vspace{0.1in}\noindent \textbf{2010 Mathematical Subject Classification}. 35J61, 35C07, 34C23.

\section{Introduction}
In this work we study the traveling solutions for the so-called diffusive Rosenzweig-MacArthur predator-prey system that reads as follows
\begin{equation}\label{1.1}
\left\lbrace
\begin{array}{lll}
u_{t} &=&\delta _{1}u_{xx}+Au\left(1-\dfrac{u}{K}\right)-B\dfrac{uv}{1+Eu},\vspace{0.1cm} \\

v_{t} &=&\delta _{2}v_{xx}-Cv+D\dfrac{uv}{1+Eu}.
\end{array}
\right.
\end{equation}
This system is posed for the one-dimensional spatial variable $x\in\mathbb R$ while $t$ denotes time.

In the above system of equations $u=u(t,x)$ denotes the density of the prey population while $v=v(t,x)$ corresponds to those of the predator, at time $t>0$ and spatial location $x\in\mathbb R$. The positive parameters $\delta_1$ and $\delta_2$ represent the diffusion coefficients for the prey and the predator, respectively. The underlying kinetic system describes the dynamics of the populations as well as their interactions and reads as the following ordinary differential equations (ODE for short)
\begin{equation}\label{1.2}
\left\lbrace
\begin{array}{lll}
u'(t) &=&Au\left(1-\dfrac{u}{K}\right)-B\dfrac{uv}{1+Eu},\\
v'(t) &=&-Cv+D\dfrac{uv}{1+Eu},
\end{array}
\right.
\end{equation}
wherein $A$, $B$, $C$, $D$ and $K$ are given positive constants.
More precisely $A$ stands for the growth factor for the prey species, $K$ denotes its carrying capacity, $B$ and $D$ are the interaction rates for the two species while $C$ corresponds to the natural death rate for the predator. Finally the parameter $E$ measures the "satiation" effect of the predator population. We refer the reader to Holling  \cite{Holling} for more details on this model.

The aim of this work is to discuss the existence and qualitative properties of the traveling wave and the periodic wave train solutions for \eqref{1.1} .
To discuss this issue, we first rescale the system by introducing
\begin{eqnarray*}
U &=&Eu,\;V=Bv/C,\;t^{\prime }=Ct,\;x^{\prime }=(C/\delta _{2})^{1/2}x, \\
d &=&\frac{\delta _{1}}{\delta _{2}},\;\alpha =A/(ECK),\;\gamma =EK,\;\beta =D/(EC).
\end{eqnarray*}
With these new variables and normalized parameters, \eqref{1.1} rewrites, omitting the prime for notational simplicity, as the following reaction-diffusion system
\begin{equation}\label{1.3}
\left\lbrace
\begin{array}{ll}
U_{t} &=dU_{xx}+\alpha U(\gamma -U)-\dfrac{UV}{1+U},   \\
V_{t} &=V_{xx}-V+\beta \dfrac{UV}{1+U},  
\end{array}
\right.
\end{equation}
while the underlying kinetic system, namely \eqref{1.2}, becomes
\begin{equation}\label{1.3bis}
\left\lbrace
\begin{array}{ll}
U' =&\alpha U(\gamma -U)-\dfrac{UV}{1+U},   \\
V' =&-V+\beta \dfrac{UV}{1+U}.
\end{array}
\right.
\end{equation}

As mentioned above, the goal of this work is to discuss some properties of the traveling wave and periodic wave train solutions for the reaction-diffusion system \eqref{1.3}. Here recall that a traveling wave solution corresponds to an entire solution of \eqref{1.3} (that is a solution defined for all time $t\in\mathbb R$) of the form
\begin{equation*}
U(t,x)=u(s),\;\;V(t,x)=v(s)\text{ with }s=x+ct,
\end{equation*}
where $c\in\R$ is some constant that stands for the wave speed. When the profile $s\mapsto (u(s),v(s))$ is periodic we speak about periodic wave train with speed $c$.
Plugging this specific form into \eqref{1.3} yields the following ODE system for the wave profiles $(u,v)=(u(s),v(s))$ for $s\in\R$
\begin{equation}\label{1.4}
\left\lbrace
\begin{array}{lll}
cu^{\prime } &=&du^{\prime \prime }+\alpha u(\gamma -u)-\dfrac{uv}{1+u}, \\
cv^{\prime } &=&v^{\prime \prime }-v+\beta \dfrac{uv}{1+u}.  
\end{array}
\right.
\end{equation}

Traveling wave solutions for the above system or more generally for predator-prey systems have been widely investigated in the last decades. One may refer the reader to the works of Dunbar \cite{Dumbar83, Dunbar84, Dumbar86} who proposed ODE methods coupled topological arguments to prove the existence of such special solutions. One may also refer to Gardner \cite{Gardner84} who developed topological arguments based on Conley index to obtain the existence of solutions with suitable behaviour at $s=\pm\infty$. We also refer to Huang, Lu and Ruan \cite{Huang03} for more general results also based on a coupling between ODE methods and topological arguments. We refer to Ruan \cite{Ruan} a result of existence of periodic wave train by using using Hopf bifurcation method.  We refer the reader to the work of Hosono \cite{Hosono} and the references cited therein for results about the Lotka-Volterra predator-prey system as well as to the recent work of Li and Xiao \cite{Li-Xiao} (see also the references therein) for results about the existence of traveling waves for more general functional responses and also for a nice review on this topic. The connexion between wave solutions and the asymptotic behaviour of the Cauchy problem \eqref{1.3} (when equipped with suitable initial data) has been scarcely studied. One may refer the reader to Gardner \cite{Gardner91} who studied the local stability of wave solutions and to Ducrot, Giletti and Matano \cite{DGM19} (and the references therein) for results related to the so-called asymptotic speed of spread.

One important difficulty when studying traveling wave solutions for predator-prey interactions relies on the ability of the underlying kinetic to generate sustained oscillations, typically through Hopf bifurcation. Hence the behaviour of the solutions of the corresponding reaction-diffusion system are expected to exhibit somehow complex spatio-temporal oscillations. Therefore the traveling wave solutions describing for instance the spatial invasion of a predator is also expected to exhibit oscillating patterns, connecting a predator-free equilibrium and some oscillating state, such as a periodic wave train (see \cite{Huang03, Ruan} for results about the existence of such periodic solutions using bifurcation methods). According to our knowledge, this question related to the shape and the behaviour of traveling waves remains largely open. It has been addressed by Dunbar in \cite{Dumbar86} and further developed by Huang \cite{Huang12}. In this aforementioned work, the author developed refined singular perturbation analysis based on the hyperbolicity of the periodic solutions of the kinetic system to construct oscillating traveling wave in the large speed asymptotic. In this work we revisit this issue by developing a dynamical system approach to obtain a complete picture of the traveling wave solutions for system \eqref{1.4}, in the large wave speed asymptotic, $c\gg 1$. Our methodology also allows us to provide uniqueness results, on the one hand for traveling waves and, on the other hand, also for periodic wave trains with large wave speed.

In this paper, we describe in particular sharp conditions on the parameters of the system that ensure the existence of a unique traveling wave solution for \eqref{1.4} connecting the predator free equilibrium to the interior equilibrium or to a unique periodic wave train. To reach such a refined description, we develop a methodology based on dynamical system arguments. Here we will more precisely couple center manifold and more generally invariant manifold reduction together with the global attractor theory and qualitative analysis for ODE.

To perform our analysis, we make use of successive rescaling arguments to restrict our analysis to a system of two ordinary differential equations. Firstly let us set
\begin{equation}
\widehat{u}(s)=u(-cs),\widehat{v}(s)=v(-cs). \notag
\end{equation}%
Then, dropping the hats on $u$ and $v$ for
notational convenience, \eqref{1.4} becomes
\begin{equation}\label{1.5}
\left\{
\begin{array}{ll}
-u^{\prime } & =\dfrac{d}{c^{2}}u^{\prime \prime }+\alpha u(\gamma -u)- \dfrac{uv}{1+u}, \\
-v^{\prime } & =\dfrac{1}{c^{2}}v^{\prime \prime }-v+\beta \dfrac{uv}{1+u}.
\end{array}
\right.
\end{equation}
Next let us set $\varepsilon =\frac{1}{c^{2}}$, $u_{1}=u,u_{2}=u^{\prime }$, $v_{1}=v$ and $v_{2}=v^{\prime }$, so that the above problem \eqref{1.5} rewrites as
\begin{equation}
\left\{
\begin{array}{lll}
u_{1}^{\prime } & = & u_{2}, \\
d\varepsilon u_{2}^{\prime } & = & -u_{2}-\alpha u_{1}(\gamma -u_{1})+\dfrac{u_{1}v_{1}}{1+u_{1}}, \\
v_{1}^{\prime } & = & v_{2}, \\
\varepsilon v_{2}^{\prime } & = & -v_{2}+v_{1}-\beta \dfrac{u_{1}v_{1}}{1+u_{1}}.
\end{array}
\right.  \label{1.6}
\end{equation}
Set
\begin{equation*}
\widehat{u}_{1}(s)=u_{1}(\varepsilon s),\widehat{u}_{2}(s)=u_{2}(\varepsilon
s),\widehat{v}_{1}(s)=v_{1}(\varepsilon s),\widehat{v}_{2}(s)=v_{2}(\varepsilon s),
\end{equation*}
and \eqref{1.6} becomes (dropping the hats for notational convenience)
\begin{equation}
\left\{
\begin{array}{lll}
u_{1}^{\prime } & = & \varepsilon u_{2}, \\
du_{2}^{\prime } & = & -u_{2}-\alpha u_{1}(\gamma -u_{1})+\dfrac{u_{1}v_{1}}{1+u_{1}}, \\
v_{1}^{\prime } & = & \varepsilon v_{2}, \\
v_{2}^{\prime } & = & -v_{2}+v_{1}-\beta \dfrac{u_{1}v_{1}}{1+u_{1}},
\end{array}
\right.  \label{1.7}
\end{equation}
where all the parameters $d,\alpha,\gamma,\beta$ and $\varepsilon$ are strictly
positive.

As mentioned above, in this paper we will investigate traveling waves and periodic wave trains for \eqref{1.4}, that correspond to  heteroclinic connexions and periodic orbits, respectively, for system \eqref{1.6} or equivalently \eqref{1.7}. Here we focus our study on the large speed asymptotic, namely $c \gg 1$, that is $0<\varepsilon=\frac{1}{c^2}\ll 1$. To study this problem we will use center manifold reduction arguments to rewrite \eqref{1.7} on a suitable invariant set as a small perturbation of the kinetic system \eqref{1.3bis}. The description of the heteroclinic and periodic orbits of the perturbed problem are then investigated using global attractor theory.

The organization of the paper is as follows. Section 2 is devoted to the description of the global attractor for the Rosenzweig-MacArthur model \eqref{1.3bis} with a particular attention paid on the heteroclinic orbits and their uniqueness. Section 3 is concerned with the study of some complete orbit of \eqref{1.7}, in the regime $0<\varepsilon\ll 1$. We first reformulate this problem as a small perturbation of \eqref{1.3bis}. We then study its global attractor and derive existence and uniqueness results for the traveling waves and periodic wave trains for \eqref{1.4} whenever $c$ is large enough. In the last section we present some numerical simulations for the system in order illustrate our results.

\section{Global attractors for the Rosenzweig-MacArthur model}

In this section we propose a refine description of the global attractor of the Rosenzweig-MacArthur
model
\begin{equation}
\left\{
\begin{array}{ll}
U^{\prime }(t)=\alpha U(\gamma -U)-\dfrac{UV}{1+U}, &  \\
V^{\prime }(t)=-V+\beta \dfrac{UV}{1+U}. &
\end{array}%
\right.  \label{2.1}
\end{equation}
The results presented in this section are mainly due to Hsu \cite{Hsu}[Theorem 3.3], Hsu, Hubbell and Waltman \cite[Lemma 4]
{Hsu-Hubbell-Waltman} where the global stability of the interior equilibrium is obtained by using the Dulac criteria, and to  Cheng \cite{Cheng81} who proved the uniqueness of the periodic orbit. In this section, we reformulate these results using the theory of the global attractor and as mentioned above we propose a refine description of this object by studying the existence and uniqueness of heteroclinic orbit starting from the no predator region $(V=0$) to the interior global attractor (where $U>0$ and $V>0$).
The results presented in the next main section, about \eqref{1.7}, will make use of the refined description presented in this section.

To study \eqref{2.1} let us first observe that this system admits the following equilibrium points.
The boundary equilibria are given by
\begin{equation*}
(\overline{U}_{0},\overline{V}_{0})=(0,0)\text{ and }(\overline{U}_{1},%
\overline{V}_{1})=(\gamma ,0).
\end{equation*}
and the unique \textit{interior equilibrium} whenever $\gamma \left( \beta -1\right) >1$, that is given by
\begin{equation*}
(\overline{U}_{2},\overline{V}_{2})=\left(\dfrac{1}{\beta -1},\dfrac{\alpha \beta %
\left[ \gamma \left( \beta -1\right) -1\right] }{\left( \beta -1\right) ^{2}}%
\right).
\end{equation*}
Next define the functions
\begin{equation*}
F(U,V) =\alpha U(\gamma -U)-\dfrac{UV}{1+U}=\frac{U}{1+U}[f(U)-V]
\end{equation*}
and
\begin{equation*}
G(U,V)=V\left( \dfrac{\beta U}{1+U}-1\right) =(\beta -1)\dfrac{V\left( U-\overline{U}_{2}\right) }{1+U}
\end{equation*}
with the nullclines
$$
f(U)=\alpha (\gamma -U)\left( 1+U\right)
$$
and
$$U=\overline{U}_{2}$$
for $U$-equation and $V$-equation, respectively. Note that the map $f(U)$ is symmetric with respect to the vertical line $U=\frac{\gamma-1}{2}$.  \\

From now on, we make use of the following assumption, ensuring that \eqref{2.1} admits the 3 equilibrium points described above.
\begin{Assumption}
\label{ASS2.1} We assume that $\gamma \left( \beta -1\right) >1$.
\end{Assumption}
Our first result investigates the local behaviour of \eqref{2.1} around the interior equilibrium.
\begin{lemma}
\label{LE2.2} Let Assumption \ref{ASS2.1} be satisfied. The interior
equilibrium is locally exponentially stable (or is a sink) if $\gamma \left(
\beta -1\right) <\beta +1$. Moreover whenever $\gamma \left( \beta -1\right)
=\beta +1$ the linearized equation has two purely imaginary (conjugated)
eigenvalues
\begin{equation*}
\lambda_\pm=\pm i \sqrt{\frac{\alpha}{\beta} \left[ \gamma \left( \beta
-1\right) -1\right]}.
\end{equation*}
Furthermore the interior equilibrium is a source if $\gamma \left( \beta
-1\right) >\beta +1$. More precisely, the linearized equation of system %
\eqref{2.1} around the interior equilibrium has two conjugated complex
eigenvalues with strictly positive real part.
\end{lemma}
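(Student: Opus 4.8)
\noindent\emph{Sketch of proof.} The statement concerns only the linearization of \eqref{2.1} at the interior equilibrium $(\overline U_{2},\overline V_{2})$, so the plan is to compute the Jacobian there, extract its trace and determinant, and conclude from the elementary theory of $2\times2$ matrices. To avoid differentiating the rational nonlinearity $UV/(1+U)$ directly, I would use the factored forms already recorded in the text, $F(U,V)=\frac{U}{1+U}\,[f(U)-V]$ with $f(U)=\alpha(\gamma-U)(1+U)$, and $G(U,V)=(\beta-1)\frac{V(U-\overline U_{2})}{1+U}$. At the interior equilibrium one has $U=\overline U_{2}$ and $V=f(\overline U_{2})=\overline V_{2}$, so the terms carrying the factors $U-\overline U_{2}$ or $f(U)-V$ drop out upon differentiation, leaving
\begin{equation*}
\partial_{U}F=\frac{\overline U_{2}}{1+\overline U_{2}}\,f'(\overline U_{2}),\qquad
\partial_{V}F=-\frac{\overline U_{2}}{1+\overline U_{2}},\qquad
\partial_{U}G=(\beta-1)\frac{\overline V_{2}}{1+\overline U_{2}},\qquad
\partial_{V}G=0.
\end{equation*}

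Next I would substitute $\overline U_{2}=1/(\beta-1)$, hence $1+\overline U_{2}=\beta/(\beta-1)$, together with $f'(U)=\alpha(\gamma-1-2U)$ and the explicit value of $\overline V_{2}$. A short computation then yields the trace $\tau:=\partial_{U}F=\frac{\alpha}{\beta}\cdot\frac{\gamma(\beta-1)-(\beta+1)}{\beta-1}$ and the determinant $\Delta:=-\partial_{V}F\,\partial_{U}G=\frac{\alpha[\gamma(\beta-1)-1]}{\beta}$, so the characteristic polynomial is $\lambda^{2}-\tau\lambda+\Delta=0$. The key structural observation is that, under Assumption \ref{ASS2.1}, $\Delta>0$ \emph{always}, while the sign of $\tau$ equals the sign of $\gamma(\beta-1)-(\beta+1)$.

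From this the three cases follow by inspection of $\lambda^{2}-\tau\lambda+\Delta$. If $\gamma(\beta-1)<\beta+1$, then $\tau<0$; whether the two roots are real (then both negative, since their product $\Delta>0$ and sum $\tau<0$) or complex conjugate (then with real part $\tau/2<0$), they have strictly negative real part, so the equilibrium is a sink. If $\gamma(\beta-1)=\beta+1$, then $\tau=0$ and the roots are $\pm i\sqrt{\Delta}=\pm i\sqrt{\frac{\alpha}{\beta}[\gamma(\beta-1)-1]}$, which is exactly the stated pair $\lambda_{\pm}$. If $\gamma(\beta-1)>\beta+1$, then $\tau>0$ and, symmetrically, both roots have strictly positive real part, so the equilibrium is a source.

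I do not anticipate a genuine obstacle: the argument is routine algebra once the factored forms are exploited. The only point to handle with a little care is the claim that, in the source regime, the two eigenvalues form a complex conjugate pair: this holds precisely when $\tau^{2}<4\Delta$, which is true near the Hopf threshold $\gamma(\beta-1)=\beta+1$ but can fail for large $\alpha$ (where the source is a node rather than a focus). Since only the sign of the real part is needed in what follows, I would state the source conclusion in terms of that sign — which is unconditional given $\Delta>0$ and $\tau>0$ — and note the complex-conjugate description as valid in the relevant parameter range.
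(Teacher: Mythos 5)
Your proof is correct and follows essentially the same route as the paper: compute the Jacobian at $(\overline U_{2},\overline V_{2})$, read off trace and determinant, and conclude from the sign pattern. The paper simply displays the Jacobian matrix and says "the result follows from straightforward algebra"; your derivation of the entries via the factored forms $F=\frac{U}{1+U}[f(U)-V]$ and $G=(\beta-1)\frac{V(U-\overline U_2)}{1+U}$ is a clean way to arrive at the same matrix, and your trace and determinant agree with the matrix printed in the paper.

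One substantive point you raise is worth dwelling on: the lemma's final sentence ("the linearized equation \dots has two conjugated complex eigenvalues with strictly positive real part") is, as you observe, not a consequence of $\gamma(\beta-1)>\beta+1$ alone. From your expressions, $\tau^{2}-4\Delta=\frac{\alpha}{\beta}\bigl(\frac{\alpha}{\beta(\beta-1)^{2}}[\gamma(\beta-1)-(\beta+1)]^{2}-4[\gamma(\beta-1)-1]\bigr)$, which is positive for $\alpha$ large with the other parameters fixed, in which case the eigenvalues are two distinct positive reals and the equilibrium is an unstable node, not a focus. Only the weaker conclusion — both eigenvalues have strictly positive real part, giving a two-dimensional unstable manifold — holds unconditionally, and that is indeed what the downstream arguments in the paper actually need (in particular the perturbation lemma in Section~3 and the Poincar\'e–Bendixson argument for Corollary~3.11 only use instability of the equilibrium and the trapping region). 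So your decision to state the source case in terms of the sign of the real part, flagging the complex-conjugate description as valid only in a sub-range of parameters, is a genuine improvement on the lemma as written rather than a departure from it.
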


\begin{remark} \label{REM2.3} System \eqref{2.1} undergoes an Hopf bifurcation around the
interior equilibrium whenever we choose the bifurcation parameter $\gamma$.
Moreover the Hopf bifurcation occurs at $\gamma=\gamma^\star$ where
\begin{equation*}
\gamma^\star \left( \beta -1\right) =\beta +1.
\end{equation*}
\end{remark}

\begin{proof}
The Jacobian matrix of the system (\ref{2.1}) at the equilibrium $(\overline{%
U}_{2},\overline{V}_{2})$ becomes
\begin{equation*}
\left(
\begin{array}{cc}
\alpha \left( \frac{\gamma \left( \beta -1\right) -\left(\beta +1 \right)}{%
\beta \left( \beta -1\right) }\right) & -\frac{1}{\beta } \\
\alpha \left[ \gamma \left( \beta -1\right) -1\right] & 0%
\end{array}%
\right) .
\end{equation*}
 and the result follows from straightforward algebra.
\end{proof}

We now discuss the existence of global attractor for \eqref{2.1}.

\begin{proposition}[Global attractor]
\label{PROP2.4} Let Assumption \ref{ASS2.1} be satisfied. There exists $R>0$
such that the triangle
\begin{equation*}
\mathbb{T}=\left\{ (U,V)\in \lbrack 0,\infty )^{2}:\beta U+V\leq R\right\}
\end{equation*}%
is positively invariant by the semiflow generated by \eqref{2.1}. The
(positive) semiflow generated by \eqref{2.1} in $\mathbb{R}^2_+$ admits a global attractor, denoted by $\mathcal{A}_{\mathbb{R}^2_+}$, that is contained in $\mathbb T$. Furthermore the triangle
$\mathbb{T}$ contains all the non negative equilibria of
\eqref{2.1}.
\end{proposition}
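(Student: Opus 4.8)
The plan is to exhibit an explicit affine functional of $(U,V)$ that is dissipative along the flow, use it to get simultaneously global existence and a compact absorbing set, and then invoke the classical existence theorem for global attractors of dissipative semiflows in finite dimension.

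First I would record that $\mathbb{R}^2_+$ is positively invariant: on the axis $\{U=0\}$ we have $U'=0$ and on $\{V=0\}$ we have $V'=0$, so both axes are invariant curves of \eqref{2.1} and no orbit starting in the open quadrant can leave it; in particular a solution issued from $\mathbb{R}^2_+$ keeps both components nonnegative, and since the right-hand side of \eqref{2.1} is smooth there (the denominator $1+U$ never vanishes), local existence and uniqueness hold.

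The core of the proof is the choice $W:=\beta U+V$. Multiplying the $U$-equation of \eqref{2.1} by $\beta$ and adding the $V$-equation, the functional-response terms $\pm\beta\,\dfrac{UV}{1+U}$ cancel, so that
\begin{equation*}
W'=\beta\alpha U(\gamma-U)-V=\bigl(\beta\alpha U(\gamma-U)+\beta U\bigr)-W=:g(U)-W,
\end{equation*}
where $g(U)=-\beta\alpha U^{2}+\beta(\alpha\gamma+1)U$ is a concave parabola, hence bounded above on $[0,\infty)$ by $M:=\dfrac{\beta(\alpha\gamma+1)^{2}}{4\alpha}>0$. Therefore $W'\le M-W$ along every nonnegative solution, and Gronwall's inequality gives $W(t)\le M+(W(0)-M)e^{-t}$. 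Since $U,V\ge 0$, this keeps $(U,V)$ bounded on the maximal interval of existence, so all solutions are global; thus \eqref{2.1} generates a continuous semiflow $\Phi$ on $\mathbb{R}^2_+$, and moreover $\limsup_{t\to+\infty}W(t)\le M$.

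Now I would fix any $R>M$ and analyse the vector field on the boundary of $\mathbb{T}=\{(U,V)\in[0,\infty)^2:\beta U+V\le R\}$. On the leg $\{U=0\}$ the field is tangent and $V$ decreases; on the leg $\{V=0\}$ the field is tangent and $U$ satisfies the logistic equation $U'=\alpha U(\gamma-U)$, which keeps $U\le\max(U(0),\gamma)\le R/\beta$ because $R>M\ge g(\gamma)=\beta\gamma$; on the open hypotenuse $\{\beta U+V=R\}$ we have $W'=g(U)-R\le M-R<0$, so the field points strictly inward. By the standard subtangential (Nagumo) criterion, $\mathbb{T}$ is positively invariant. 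The estimate $\limsup_t W(t)\le M<R$ also shows that $\mathbb{T}$ absorbs every bounded subset of $\mathbb{R}^2_+$ in finite time; since $\mathbb{R}^2_+$ is finite-dimensional, a continuous semiflow with a compact positively invariant absorbing set admits a global attractor, here $\mathcal{A}_{\mathbb{R}^2_+}=\bigcap_{t\ge 0}\overline{\bigcup_{s\ge t}\Phi(s)\mathbb{T}}$, and positive invariance of $\mathbb{T}$ gives $\mathcal{A}_{\mathbb{R}^2_+}\subset\mathbb{T}$. For the last assertion, any equilibrium $(\overline{U},\overline{V})$ of \eqref{2.1} satisfies $W'=0$, hence $\beta\overline{U}+\overline{V}=g(\overline{U})\le M<R$, so each of $(\overline{U}_{0},\overline{V}_{0})$, $(\overline{U}_{1},\overline{V}_{1})$ and $(\overline{U}_{2},\overline{V}_{2})$ belongs to $\mathbb{T}$.

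I do not anticipate a genuine obstacle here. The single idea that must be found is the weighted sum $W=\beta U+V$ that annihilates the functional-response terms and reduces the system to the scalar dissipative inequality $W'\le M-W$; after that, boundedness of the parabola $g$, the Gronwall estimate, the boundary check for positive invariance of $\mathbb{T}$, and the abstract construction of the attractor from a compact absorbing set are all routine. The only mild care points are treating the corners of $\mathbb{T}$ and confirming global existence of solutions before speaking of a semiflow and its attractor.
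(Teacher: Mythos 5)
Your proof is correct and rests on exactly the same idea as the paper's one-line argument: the weighted functional $W=\beta U+V$ kills the interaction terms, and the resulting inequality (equivalently, $\beta F+G<0$ on the hypotenuse for $R$ large) makes the triangle positively invariant and absorbing. You have simply spelled out the details the paper leaves implicit — the Gronwall bound $W'\le M-W$, the boundary check on the legs, global existence, and the equilibrium estimate $\beta\overline U+\overline V=g(\overline U)\le M$ — so this is the same route, just written out.
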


\begin{proof}
The result follows from the fact that for all $R>0$ large enough
\begin{equation*}
\beta F(U,V)+G(U,V)<0
\end{equation*}%
whenever $\beta U+V=R$ and $U\geq 0$ and $V\geq 0$.
\end{proof}

We now discuss the existence of an interior attractor. To that aim we consider the regions
\begin{equation*}
\partial_U \mathbb{R}^2_+= \left\lbrace (U,V) \in \mathbb{R}^2_+: V =0
\right\rbrace ,
\end{equation*}
and
\begin{equation*}
\partial_V \mathbb{R}^2_+= \left\lbrace (U,V) \in \mathbb{R}^2_+: U =0
\right\rbrace ,
\end{equation*}
as well as the interior region
\begin{equation*}
\mathrm{Int} \left( \mathbb{R}^2_+ \right) = \left\lbrace (U,V) \in \mathbb{R%
}^2_+: U >0 \text{ and } V >0 \right\rbrace.
\end{equation*}
They are all positively invariant by the semiflow generated by \eqref{2.1}.

We now decompose the state space $M:=\R^2_+$ into
the interior region
\begin{equation*}
\overset{\circ}{M}=\mathrm{Int} \left( \mathbb{R}^2_+ \right)
\end{equation*}
and the boundary region
\begin{equation*}
\partial M=M \setminus \overset{\circ}{M}= \partial_U \mathbb{R}^2_+ \cup
\partial_V \mathbb{R}^2_+.
\end{equation*}
Then by using Hale and Waltman \cite[Theorem 4.1]{Hale-Waltman} we obtain
the following uniform persistence result with respect to the state space
decomposition $(\partial M,\overset{\circ}{M})$.

\begin{proposition}[Uniform persistence]
\label{PROP2.5} Let Assumption \ref{ASS2.1} be satisfied. There exists a
constant $\Theta>0$, such that for each $(U_0,V_0) \in [0, \infty)^2$ with $%
U_0>0$ and $V_0>0$
\begin{equation*}
\liminf_{t \to \infty} U(t) \geq \Theta \text{ and } \liminf_{t \to \infty}
V(t) \geq \Theta.
\end{equation*}
\end{proposition}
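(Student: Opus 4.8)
\emph{Proof strategy.} The plan is to apply the abstract uniform persistence theorem of Hale and Waltman \cite[Theorem 4.1]{Hale-Waltman} to the semiflow $\Phi$ generated by \eqref{2.1} on $M=\R^2_+$, relative to the decomposition $M=\overset{\circ}{M}\cup\partial M$ fixed above. For this I need three ingredients: first, that $\Phi$ is point dissipative and (eventually) compact; second, an acyclic covering of the ``boundary attractor'' $\widetilde{A}_{\partial M}=\bigcup_{x\in A_{\partial M}}\omega(x)$ by isolated invariant sets, where $A_{\partial M}$ is the global attractor of $\Phi|_{\partial M}$; and third, that the stable set of each piece of this covering is disjoint from the interior $\overset{\circ}{M}$.

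The first point is immediate: Proposition \ref{PROP2.4} shows $\Phi$ is point dissipative with global attractor $\mathcal{A}_{\mathbb{R}^2_+}\subset\mathbb{T}$, and since $\Phi$ comes from a smooth planar vector field it is eventually compact, so the standing hypotheses of \cite[Theorem 4.1]{Hale-Waltman} hold; restricting to the compact positively invariant set $\partial M\cap\mathbb{T}$ also produces a global attractor $A_{\partial M}$ for $\Phi|_{\partial M}$. For the second point I would compute the boundary flow explicitly. On $\partial_V\R^2_+=\{U=0\}$ the system reduces to $V'=-V$, so every orbit tends to $(0,0)$; on $\partial_U\R^2_+=\{V=0\}$ it reduces to the logistic equation $U'=\alpha U(\gamma-U)$, so $\omega(U_0,0)=\{(\gamma,0)\}$ for $U_0>0$ and $\omega(0,0)=\{(0,0)\}$. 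Hence $\widetilde{A}_{\partial M}=\{(0,0)\}\cup\{(\gamma,0)\}=:M_1\cup M_2$, both $M_i$ being hyperbolic equilibria and therefore isolated invariant sets; the covering $\{M_1,M_2\}$ is acyclic because inside $\widetilde{A}_{\partial M}$ the only connecting orbit runs from $M_1$ to $M_2$ along the branch $(0,\gamma]\times\{0\}$ of the unstable set of the origin, there is no orbit back from $M_2$ to $M_1$, and neither equilibrium admits a homoclinic loop.

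For the third point I would read off the two linearizations. At $M_1=(0,0)$ the Jacobian of \eqref{2.1} is the diagonal matrix with entries $\alpha\gamma>0$ and $-1<0$, so the stable manifold of $M_1$ is tangent to the $V$-axis, and since $\{U=0\}$ is invariant we get $W^s(M_1)\subset\{U=0\}\subset\partial M$. At $M_2=(\gamma,0)$ the Jacobian is upper triangular with diagonal entries $-\alpha\gamma<0$ and $\dfrac{\gamma(\beta-1)-1}{1+\gamma}$, the latter strictly positive by Assumption \ref{ASS2.1}; thus $M_2$ is a saddle whose stable manifold is tangent to the $U$-axis, and invariance of $\{V=0\}$ gives $W^s(M_2)\subset\{V=0\}\subset\partial M$. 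In particular $W^s(M_i)\cap\overset{\circ}{M}=\emptyset$ for $i=1,2$.

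With these facts in hand, \cite[Theorem 4.1]{Hale-Waltman} yields a constant $\Theta>0$ with $\liminf_{t\to\infty}\mathrm{dist}\big(\Phi(t)(U_0,V_0),\partial M\big)\geq\Theta$ for every $(U_0,V_0)\in\overset{\circ}{M}$, and since $\mathrm{dist}\big((U,V),\partial M\big)=\min\{U,V\}$ on $\R^2_+$ this is precisely the claimed bound. I expect no genuine analytic obstacle here: the work is almost entirely the bookkeeping needed to match the Hale--Waltman hypotheses, and the only slightly delicate point is the assertion that the stable manifolds of $M_1$ and $M_2$ lie inside the coordinate axes, which however follows at once from the hyperbolicity computations above together with the invariance of those axes.
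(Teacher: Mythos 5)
Your proof is correct and follows the same overall blueprint as the paper --- dissipativity combined with Hale and Waltman's Theorem~4.1 applied to the Morse decomposition $M_1=\{(0,0)\}$, $M_2=\{(\gamma,0)\}$ of the boundary attractor --- but it verifies the key hypothesis $W^s(M_i)\cap\overset{\circ}{M}=\emptyset$ by a different technique. The paper does so by a direct differential inequality: assuming an interior orbit stays $\varepsilon$-close to $M_1$ (resp.\ $M_2$) for all forward time, the $U$-equation (resp.\ the $V$-equation) forces $U$ (resp.\ $V$) to grow exponentially, a contradiction. You instead compute the Jacobian at each boundary saddle, identify the stable eigendirection, and appeal to uniqueness of the local stable manifold together with invariance of the coordinate axes to conclude that $W^s(M_i)$ lies entirely inside $\partial M$. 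Both arguments are sound; the paper's is slightly more elementary and robust (a pure comparison estimate with no hyperbolicity input), whereas yours invokes the stable manifold theorem and, implicitly, the fact that an interior orbit converging to a hyperbolic saddle must eventually enter the local stable manifold --- a true but unstated step needed to pass from the local stable manifold to the global stable set. A minor benefit of your write-up is that it spells out the boundary dynamics and the acyclicity of the covering explicitly, which the paper condenses into the single remark that $M_1$ is chained to $M_2$.
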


\begin{proof}
Indeed the two equilibria on the boundary $M_1=\left\lbrace (0,0)
\right\rbrace $ and $M_2=\left\lbrace (\gamma,0) \right\rbrace $ are chained
in the sense of Hale and Waltman's \cite{Hale-Waltman}. Therefore it is
sufficient to prove the local repulsivity of each of these equilibria with
respect to the interior region. Assume that
\begin{equation*}
U_0>0 \text{ and } V_0>0 \text{ and } U(t)+V(t) \leq \varepsilon, \forall t
\geq 0.
\end{equation*}
Then by using the $U$-equation of \eqref{2.1} we obtain
\begin{equation*}
U^{\prime }\geq \left[ \alpha (\gamma -\varepsilon)- \varepsilon \right] U.
\end{equation*}
Therefore by choosing $\varepsilon>0$ small enough (so that $\left[ \alpha
(\gamma -\varepsilon)- \varepsilon \right]>0$) we deduce that
\begin{equation*}
\lim_{t \to \infty }U(t)= \infty
\end{equation*}
which is impossible since the system is dissipative.

Assume that
\begin{equation*}
V_0>0 \text{ and } \vert U(t)-\gamma \vert +V(t) \leq \varepsilon, \forall t
\geq 0.
\end{equation*}
Then by using the $V$-equation of \eqref{2.1} we obtain
\begin{equation*}
V^{\prime }\geq -V+\beta \dfrac{(\gamma-\varepsilon)V}{1+(\gamma-\varepsilon)%
}.
\end{equation*}
Therefore by choosing $\varepsilon>0$ small enough (so that $\beta \dfrac{%
(\gamma-\varepsilon)}{1+(\gamma-\varepsilon)}>1 \Leftrightarrow
(\beta-1)(\gamma-\varepsilon)>1 $) we deduce that
\begin{equation*}
\lim_{t \to \infty }V(t)= \infty
\end{equation*}
which is a contradiction.
\end{proof}

By using Dulac's criterion Hsu, Hubbell and Waltman \cite[Lemma 4]%
{Hsu-Hubbell-Waltman} proved that the system has no periodic orbit whenever $\gamma
\left( \beta -1\right) <\beta +1$. More precisely, setting $\varphi(U,V)=\frac{1+U}{U} V^{\xi+1}$ for some constant $\xi>0$ such that
$$
\left(\frac{\gamma-1}{2}-\frac{\gamma-1}{4}\right)\frac{4\alpha}{\beta-1}<\xi<\left(\frac{1}{\beta-1}-\frac{\gamma-1}{4}\right)\frac{4\alpha}{\beta-1},
$$
then, the aforementioned works proved that for each $0<\eta<1$ there exists $m_\eta>0$ such that
\begin{equation}\label{Dulac}
\partial_U\left(\varphi F\right)+\partial_U\left(\varphi G\right)\leq -m_\eta,\;\forall (U,V)\in \left[\eta,\eta^{-1}\right]^2.
\end{equation}
Therefore by using the Poincar\'e
Bendixson theorem we obtain the following theorem.
\begin{theorem}[Global stability]
\label{TH2.6} Let Assumption \ref{ASS2.1} be satisfied and assume that
\begin{equation*}
\gamma \left( \beta -1\right) <\beta +1.
\end{equation*}
Then the interior equilibrium is global asymptotically stable for system %
\eqref{2.1} restricted to $\mathrm{Int}(\mathbb{R}^2_+ )$.
\end{theorem}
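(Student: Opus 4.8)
The plan is to combine the Poincaré--Bendixson trichotomy with the Bendixson--Dulac inequality recorded in \eqref{Dulac}. Fix $(U_0,V_0)\in\mathrm{Int}(\mathbb{R}^2_+)$ and let $t\mapsto(U(t),V(t))$ be the corresponding solution of \eqref{2.1}. By Proposition \ref{PROP2.4} this orbit is bounded (it enters and stays in $\mathbb{T}$), so its $\omega$-limit set $\omega:=\omega(U_0,V_0)$ is nonempty, compact, connected and invariant. By the uniform persistence statement of Proposition \ref{PROP2.5} there is a constant $\Theta>0$, independent of the initial datum, with $\liminf_{t\to\infty}U(t)\ge\Theta$ and $\liminf_{t\to\infty}V(t)\ge\Theta$; hence $\omega\subset[\Theta,\infty)^2$, and since also $\omega\subset\mathbb{T}$, the set $\omega$ lies in a fixed compact rectangle $\mathcal{K}:=[\eta,\eta^{-1}]^2$ for a suitable small $\eta\in(0,1)$ (take $\eta\le\Theta$ and $\eta^{-1}$ larger than the diameter of $\mathbb{T}$). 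In particular $\omega$ is contained in $\mathrm{Int}(\mathbb{R}^2_+)$.

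Next I would apply the Poincaré--Bendixson theorem to the planar flow \eqref{2.1}. Since the only equilibrium of \eqref{2.1} lying in $\mathrm{Int}(\mathbb{R}^2_+)$ is the interior equilibrium $(\overline{U}_2,\overline{V}_2)$ (the boundary equilibria $(0,0)$ and $(\gamma,0)$ are excluded because $\omega\subset\mathrm{Int}(\mathbb{R}^2_+)$), the limit set $\omega$ must be one of: (i) the singleton $\{(\overline{U}_2,\overline{V}_2)\}$; (ii) a periodic orbit contained in $\mathcal{K}$; or (iii) the union of $\{(\overline{U}_2,\overline{V}_2)\}$ with one or more homoclinic orbits to $(\overline{U}_2,\overline{V}_2)$.

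The core of the argument is to rule out (ii) and (iii) via \eqref{Dulac}. First note that the standing hypothesis $\gamma(\beta-1)<\beta+1$ rearranges to $(\gamma-1)(\beta-1)<2$, which is exactly the condition that makes the open interval defining $\xi$ in \eqref{Dulac} non-degenerate; thus the Dulac function $\varphi(U,V)=\frac{1+U}{U}V^{\xi+1}$ and the pointwise estimate \eqref{Dulac} are available on $\mathcal{K}$. Now suppose $\omega$ contains a closed curve $\Gamma$ (a periodic orbit in case (ii), or a homoclinic loop in case (iii)). Then $\Gamma\subset\mathcal{K}$, and since $\mathcal{K}$ is convex the bounded open region $\Omega$ enclosed by $\Gamma$ also lies in $\mathcal{K}$. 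Because the vector field $(F,G)$ is tangent to $\Gamma$, Green's theorem gives
\[
\iint_{\Omega}\Big[\partial_U(\varphi F)+\partial_V(\varphi G)\Big]\,dU\,dV=\oint_{\Gamma}\varphi\big(F\,dV-G\,dU\big)=0,
\]
while by \eqref{Dulac} the integrand on the left is $\le -m_\eta<0$, a contradiction. Hence neither (ii) nor (iii) can occur and $\omega=\{(\overline{U}_2,\overline{V}_2)\}$, i.e. every interior solution converges to the interior equilibrium. Finally, Lemma \ref{LE2.2} guarantees that under $\gamma(\beta-1)<\beta+1$ this equilibrium is a sink, hence Lyapunov stable; together with the convergence just established, $(\overline{U}_2,\overline{V}_2)$ is globally asymptotically stable for \eqref{2.1} restricted to $\mathrm{Int}(\mathbb{R}^2_+)$.

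The step I expect to require the most care is the global confinement of the $\omega$-limit set — and, in cases (ii) and (iii), of the region it encloses — into the single compact rectangle $\mathcal{K}$ on which \eqref{Dulac} is valid; this is exactly where Proposition \ref{PROP2.5} (keeping trajectories away from the coordinate axes, where $\varphi$ is singular and where the Dulac estimate degrades) is indispensable, together with Proposition \ref{PROP2.4} (keeping them away from infinity), plus the elementary but essential observation that $\mathcal{K}$ is convex so that it contains any loop it meets along with the loop's interior. A secondary point is to invoke the precise form of the Poincaré--Bendixson theorem in the presence of equilibria, so as to be certain that the only "connecting-orbit" configuration compatible with $\omega\subset\mathrm{Int}(\mathbb{R}^2_+)$ is a homoclinic loop at $(\overline{U}_2,\overline{V}_2)$, which the same Dulac computation then excludes.
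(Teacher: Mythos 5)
Your proof is correct and follows the same route as the paper: the Dulac estimate \eqref{Dulac} (in which the paper has an evident typo, $\partial_U(\varphi G)$ should read $\partial_V(\varphi G)$) rules out closed orbits, and Poincar\'e--Bendixson then forces convergence to the interior equilibrium. The paper states this in two sentences, deferring to Hsu--Hubbell--Waltman and Poincar\'e--Bendixson; you have filled in the supporting details (confinement of $\omega$-limit sets to a compact rectangle via Propositions \ref{PROP2.4}--\ref{PROP2.5}, the exclusion of homoclinic loops via the same Green's-theorem computation, and the local stability from Lemma \ref{LE2.2}), but the underlying argument is identical.
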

In the following theorem the uniqueness of the periodic orbit was proved by
Cheng \cite{Cheng81} and its stability was proved by Hsu, Hubbell and
Waltman \cite{Hsu-Hubbell-Waltman}.

\begin{theorem}[Unique stable periodic orbit]
\label{TH2.8} Assume that $\gamma \left( \beta -1\right) >\beta +1.$ Then there exists a unique stable periodic orbit surrounding the interior
equilibrium and the system has no other periodic orbit.
\end{theorem}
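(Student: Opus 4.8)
The plan is to split the claim into three parts --- existence of a periodic orbit enclosing the interior equilibrium $(\overline U_2,\overline V_2)$, its uniqueness, and its orbital stability --- and to recover stability essentially for free once the first two are in hand, so that the only genuinely analytic input is needed for uniqueness. For existence I would invoke the Poincar\'e--Bendixson theorem. Under Assumption~\ref{ASS2.1} together with $\gamma(\beta-1)>\beta+1$, Lemma~\ref{LE2.2} gives that $(\overline U_2,\overline V_2)$ is a source, Proposition~\ref{PROP2.4} provides the compact positively invariant triangle $\mathbb T$ carrying the global attractor, and Proposition~\ref{PROP2.5} gives uniform persistence. Fix any $(U_0,V_0)\in\mathrm{Int}(\mathbb R^2_+)$ distinct from $(\overline U_2,\overline V_2)$. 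Persistence and dissipativity force its $\omega$-limit set to be a non-empty, compact, connected, invariant subset of $\mathbb T\cap\mathrm{Int}(\mathbb R^2_+)$; it contains no equilibrium (the two boundary equilibria are excluded since they lie on $\partial\mathbb R^2_+$, the interior source because it cannot belong to the $\omega$-limit set of a non-constant planar orbit). Hence this $\omega$-limit set is a periodic orbit, and by the Jordan curve theorem and index theory it encloses exactly one equilibrium, necessarily $(\overline U_2,\overline V_2)$.

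For uniqueness I would follow Cheng~\cite{Cheng81}. The geometric content is that $\gamma(\beta-1)>\beta+1$ is equivalent to $\overline U_2<\tfrac{\gamma-1}{2}$, i.e. the vertical predator nullcline $U=\overline U_2$ meets the prey nullcline $V=f(U)$ on its strictly increasing branch, while $f$ is symmetric about $U=\tfrac{\gamma-1}{2}$. Setting $y=\ln V$ (legitimate since $V>0$ on a periodic orbit) and $\xi=\Psi(U)$ with $\Psi'(U)=\tfrac{1+U}{U}$, system~\eqref{2.1} becomes the generalized Li\'enard system $\dot\xi=f(U(\xi))-e^{y}$, $\dot y=(\beta-1)\tfrac{U(\xi)-\overline U_2}{1+U(\xi)}$, and the symmetry and position facts above are exactly the monotonicity hypothesis of the classical Li\'enard uniqueness theorem (Zhang Zhifen's criterion). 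Equivalently, closer to Cheng's original reasoning, one evaluates the divergence integral along any periodic orbit $\Gamma$: after noting that two of the contributions are exact differentials and drop out one is left with $\oint_\Gamma\tfrac{U f'(U)}{1+U}\,dt$, and reflecting the arc of $\Gamma$ lying in $\{U\le\overline U_2\}$ across the line $U=\overline U_2$ and using the symmetry of $f$ shows this quantity is strictly negative. Thus every periodic orbit is hyperbolic and orbitally attracting; since two nested hyperbolic attracting cycles cannot coexist (an unstable cycle would be forced between them), the periodic orbit is unique.

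For stability, let $\Gamma$ denote the unique periodic orbit. A trajectory starting just outside $\Gamma$ is trapped, by Propositions~\ref{PROP2.4}--\ref{PROP2.5}, in the compact annulus between $\Gamma$ and $\partial\mathbb T$, cannot cross $\Gamma$, and --- by Poincar\'e--Bendixson and the absence of equilibria in that region --- has $\Gamma$ as $\omega$-limit set. A trajectory starting strictly inside $\Gamma$ and away from $(\overline U_2,\overline V_2)$ stays inside $\Gamma$, cannot have the source as $\omega$-limit set nor a homoclinic loop based at it (a source has no stable direction), and again has $\Gamma$ as $\omega$-limit set. Hence $\Gamma$ is orbitally asymptotically stable --- in fact globally so on $\mathrm{Int}(\mathbb R^2_+)\setminus\{(\overline U_2,\overline V_2)\}$ --- and \eqref{2.1} has no other periodic orbit; alternatively the stability may simply be quoted from Hsu, Hubbell and Waltman~\cite{Hsu-Hubbell-Waltman}.

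The existence and stability steps are soft, resting only on the invariant region, uniform persistence, and the Poincar\'e--Bendixson trichotomy. I expect the main obstacle to be the uniqueness step: converting the qualitative facts that $f$ is parabolic (symmetric about $\tfrac{\gamma-1}{2}$) and that $\overline U_2<\tfrac{\gamma-1}{2}$ into the negative sign of the divergence integral --- equivalently into the monotonicity hypothesis of the Li\'enard uniqueness theorem --- is exactly where Cheng's reflection argument across $U=\overline U_2$ is required, and it is the only place where the parabolic shape of the prey nullcline is genuinely exploited.
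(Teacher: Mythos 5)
Your proposal follows essentially the same route as the paper, which gives no proof of Theorem~\ref{TH2.8} and simply attributes uniqueness to Cheng~\cite{Cheng81} and stability to Hsu, Hubbell and Waltman~\cite{Hsu-Hubbell-Waltman}. Your fleshed-out version --- existence via Poincar\'e--Bendixson inside the invariant triangle once the interior equilibrium is a source, the reduction of the divergence integral along a periodic orbit to $\oint_\Gamma \frac{U f'(U)}{1+U}\,dt$, and Cheng's reflection argument tied to $\overline U_2 < \frac{\gamma-1}{2}$ --- is consistent with those references and with the paper's later appeal (just after Lemma~\ref{LE3.14}) to Cheng's negativity of $\int_0^{T_0}\left[\partial_U F + \partial_V G\right]dt$.
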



In the following theorem we are using the notion of global attractor
considered first by Hale \cite{Hale88, Hale00}. We refer to Magal and Zhao
\cite{Magal-Zhao} and Magal \cite{Magal09} for more results and examples
about global attractors only attracting compact subsets.

\begin{proposition}
\label{PROP2.10} The semiflow generated by \eqref{2.1} restricted to $\mathbb{R}^2_+$ (respectively $\partial_U \mathbb{R}^2_+$, $%
\partial_V \mathbb{R}^2_+$ and $\mathrm{Int} \left( \mathbb{R}^2_+ \right)$)
has a global attractor $\mathcal{A}_{\mathbb{R}^2_+}$ (respectively $%
\mathcal{A}_{\partial_U \mathbb{R}^2_+}$, $\mathcal{A}_{\partial_V \mathbb{R}%
^2_+}$ and $\mathcal{A}_{\mathrm{Int} \left( \mathbb{R}^2_+ \right)}$) which
is a compact and connected subset which attracts all the compact subsets of $\mathbb{R}^2_+$ (respectively $\partial_U \mathbb{R}^2_+$, $\partial_V
\mathbb{R}^2_+$ and $\mathrm{Int} \left( \mathbb{R}^2_+ \right)$).
\end{proposition}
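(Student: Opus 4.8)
The plan is to establish, for each of the four invariant sets $X \in \{\mathbb{R}^2_+, \partial_U \mathbb{R}^2_+, \partial_V \mathbb{R}^2_+, \mathrm{Int}(\mathbb{R}^2_+)\}$, that the semiflow generated by \eqref{2.1} restricted to $X$ is point dissipative (every orbit eventually enters a fixed bounded set), eventually bounded on bounded sets, and asymptotically smooth — here trivially so, since we are in finite dimensions and the semiflow is continuous — and then invoke the classical existence theorem for global attractors (Hale \cite{Hale88, Hale00}). The one subtlety is that for the non-closed sets $\partial_V\mathbb{R}^2_+$ and $\mathrm{Int}(\mathbb{R}^2_+)$ one only gets an attractor attracting compact subsets rather than all bounded subsets, which is exactly the weaker notion referenced just before the statement; this is why the proposition is phrased in terms of attraction of compact sets.

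First I would handle $X = \mathbb{R}^2_+$: this is immediate from Proposition \ref{PROP2.4}, which already produces the positively invariant absorbing triangle $\mathbb{T}$, so $\mathcal{A}_{\mathbb{R}^2_+}$ exists and lies in $\mathbb{T}$; compactness and connectedness of the attractor of a continuous dissipative semiflow on a connected state space are standard. Next, $X = \partial_U\mathbb{R}^2_+ = \{V=0\}$: on this line the system reduces to the scalar logistic equation $U' = \alpha U(\gamma - U)$, whose dynamics are completely explicit; the interval $[0,\gamma]$ is absorbing, so $\mathcal{A}_{\partial_U\mathbb{R}^2_+} = [0,\gamma]\times\{0\}$, which is compact and connected. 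On $X = \partial_V\mathbb{R}^2_+ = \{U=0\}$ the system reduces to $V' = -V$, so every orbit tends to the origin; here $\{(0,0)\}$ attracts every compact (indeed bounded) subset of the positive $V$-axis, giving $\mathcal{A}_{\partial_V\mathbb{R}^2_+} = \{(0,0)\}$.

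The substantive case is $X = \mathrm{Int}(\mathbb{R}^2_+)$. By the uniform persistence result of Proposition \ref{PROP2.5}, there is $\Theta > 0$ such that every interior orbit eventually satisfies $U(t) \ge \Theta$ and $V(t) \ge \Theta$, and by Proposition \ref{PROP2.4} it also eventually satisfies $\beta U(t) + V(t) \le R$. Hence the compact set $K_\Theta := \{(U,V) : U \ge \Theta,\ V \ge \Theta,\ \beta U + V \le R\} \subset \mathrm{Int}(\mathbb{R}^2_+)$ absorbs every compact subset of $\mathrm{Int}(\mathbb{R}^2_+)$ — note that for a compact subset $C$ of the interior, $\inf_{C} U$ and $\inf_C V$ are strictly positive, so the persistence estimate applies uniformly over $C$ by continuous dependence on initial data. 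This makes the restricted semiflow point dissipative with an absorbing compact set, and the existence theorem for the compact global attractor (as in Magal \cite{Magal09}, Magal and Zhao \cite{Magal-Zhao}) yields $\mathcal{A}_{\mathrm{Int}(\mathbb{R}^2_+)}$, compact and connected, attracting all compact subsets of $\mathrm{Int}(\mathbb{R}^2_+)$ and contained in $K_\Theta$.

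The main obstacle — really the only nonroutine point — is the transition from the pointwise persistence bound of Proposition \ref{PROP2.5} to a \emph{uniform} absorption estimate on an arbitrary compact subset of the open quadrant; this requires a compactness-plus-continuity argument rather than a bare application of the persistence proposition, and it is precisely what forces the weaker "attracting compact sets" formulation of the conclusion for the non-closed invariant sets.
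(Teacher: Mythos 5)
The paper states Proposition \ref{PROP2.10} without a written proof, pointing instead to the global attractor framework of Hale \cite{Hale88,Hale00} and to Magal and Zhao \cite{Magal-Zhao}, Magal \cite{Magal09} for the case of attractors attracting only compact sets. Your overall plan --- dissipativity via the absorbing triangle $\mathbb{T}$ of Proposition \ref{PROP2.4}, explicit identification of the one-dimensional boundary attractors $[0,\gamma]\times\{0\}$ and $\{(0,0)\}$, and uniform persistence for the interior --- is exactly the toolbox the paper has in mind, and your treatment of $\mathbb{R}^2_+$, $\partial_U\mathbb{R}^2_+$ and $\partial_V\mathbb{R}^2_+$ is correct.

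The interior case, however, does not close as written. You claim that $K_\Theta=\{U\ge\Theta,\,V\ge\Theta,\,\beta U+V\le R\}$ absorbs every compact $C\subset\mathrm{Int}(\mathbb{R}^2_+)$ ``by continuous dependence on initial data.'' Proposition \ref{PROP2.5} only gives a $\liminf$ bound for each individual initial condition; it does not exhibit a positively invariant neighborhood in the interior, and continuous dependence at a fixed finite time does not control an orbit uniformly on a whole half-line $[T_x,\infty)$ for data close to $x$. In particular, orbits started near $x$ may still dip below the level $\Theta$ at arbitrarily large later times before eventually persisting, so compactness plus continuity alone does not yield a uniform entrance time into $K_\Theta$. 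Converting pointwise persistence into a compact global attractor in the interior is precisely the nontrivial content of the theorems you cite (Hale and Waltman \cite{Hale-Waltman}, Theorem 3.2; Magal and Zhao \cite{Magal-Zhao}; Magal \cite{Magal09}), and these are proved via the acyclic Morse decomposition of the boundary attractor, not by elementary compactness. The clean repair is to drop the intermediate claim about $K_\Theta$ and invoke Hale--Waltman or Magal--Zhao directly: the semiflow is point dissipative, completely continuous, and uniformly persistent with respect to the pair $\left(\mathrm{Int}(\mathbb{R}^2_+),\partial\mathbb{R}^2_+\right)$, with acyclic boundary covering $\{(0,0)\},\{(\gamma,0)\}$; this produces $\mathcal{A}_{\mathrm{Int}(\mathbb{R}^2_+)}$, compact, connected, and attracting compact subsets of the interior, which is the same appeal the paper itself makes in Proposition \ref{PROP3.7}.
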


\begin{remark}
\label{REM2.10} The global attractor $\mathcal{A}_{\mathrm{Int} \left(
\mathbb{R}^2_+ \right)}$ only attracts the compact
subsets of $\mathrm{Int} \left( \mathbb{R}^2_+ \right)$. That is to say that
$\mathcal{A}_{\mathrm{Int} \left( \mathbb{R}^2_+ \right)}$ does not attract
the bounded subsets of the interior region $\mathrm{Int} \left( \mathbb{R}%
^2_+ \right)$ (see \cite{Magal-Zhao} for more examples).
\end{remark}

It is readily checked that the global attractor in $\partial_V \mathbb{R}%
^2_+ $ is
\begin{equation*}
\mathcal{A}_{\partial_V \mathbb{R}^2_+}=\left\lbrace (0,0) \right\rbrace
\end{equation*}
while the global attractor in $\partial_U \mathbb{R}^2_+$ is
\begin{equation*}
\mathcal{A}_{\partial_U \mathbb{R}^2_+}=\left\lbrace (U,V) \in \mathbb{R}%
^2_+: U \in [0, \gamma] \text{ and }V=0 \right\rbrace.
\end{equation*}
Indeed $\mathcal{A}_{\partial_U \mathbb{R}^2_+}$ contains the two equilibria
in $\partial_U \mathbb{R}^2_+$ as well as the heteroclinic orbit joining these two
equilibria.

\begin{proposition}
Assume that $\gamma \left( \beta -1\right) >\beta +1.$ Then the global attractor $\mathcal{A}_{\mathrm{Int} \left( \mathbb{R}^2_+
\right)}$ is the union of the periodic orbit and all the points surrounded
by the periodic orbit.
\end{proposition}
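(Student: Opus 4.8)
The plan is to identify $\mathcal{A}_{\mathrm{Int}(\mathbb{R}^2_+)}$ with the maximal compact invariant subset of $\mathrm{Int}(\mathbb{R}^2_+)$ and then to squeeze it between the open disk bounded by the periodic orbit and its closure. Throughout, denote by $\Gamma$ the unique stable periodic orbit given by Theorem \ref{TH2.8}, by $e=(\overline U_2,\overline V_2)$ the interior equilibrium (a source, by Lemma \ref{LE2.2}, since $\gamma(\beta-1)>\beta+1$), and by $D$ the bounded connected component of $\mathbb{R}^2\setminus\Gamma$; by Theorem \ref{TH2.8} the orbit $\Gamma$ surrounds $e$, so $e\in D$. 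A first elementary step records that $\overline D=\Gamma\cup D$ is a \emph{compact subset of} $\mathrm{Int}(\mathbb{R}^2_+)$: indeed $\Gamma$, being a non-stationary periodic orbit, cannot meet the invariant coordinate axes, hence $\Gamma\subset\mathrm{Int}(\mathbb{R}^2_+)$ and is bounded away from $\partial\mathbb{R}^2_+$; since $\partial\mathbb{R}^2_+$ is connected and unbounded it lies in the \emph{unbounded} component of $\mathbb{R}^2\setminus\Gamma$, so $D$, and hence $\overline D$, is disjoint from it. Moreover $\overline D$ is invariant (forward and backward) because no orbit can cross the Jordan curve $\Gamma$, so $D$ is positively and negatively invariant and so is its closure.

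Next I would use that, since the semiflow restricted to $\mathrm{Int}(\mathbb{R}^2_+)$ admits a global attractor $\mathcal{A}_{\mathrm{Int}(\mathbb{R}^2_+)}$ attracting the compact subsets of $\mathrm{Int}(\mathbb{R}^2_+)$ (Proposition \ref{PROP2.10}), this attractor is precisely the maximal compact invariant subset of $\mathrm{Int}(\mathbb{R}^2_+)$, equivalently the union of all complete orbits whose closure is a compact subset of $\mathrm{Int}(\mathbb{R}^2_+)$. The inclusion $\overline D\subseteq\mathcal{A}_{\mathrm{Int}(\mathbb{R}^2_+)}$ is then immediate from the first step, $\overline D$ being itself a compact invariant subset of the interior.

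For the reverse inclusion, let $q\in\mathcal{A}_{\mathrm{Int}(\mathbb{R}^2_+)}$ and let $O$ be its complete orbit, whose closure is compact in $\mathrm{Int}(\mathbb{R}^2_+)$; assume for contradiction $q\notin\overline D$. Since distinct orbits are disjoint and $\overline D$ is invariant, $O$ lies entirely in the unbounded component of $\mathbb{R}^2\setminus\Gamma$. I would then analyse $\omega(q)$ and $\alpha(q)$, which are nonempty compact connected invariant subsets of $\mathrm{Int}(\mathbb{R}^2_+)$. By the Poincaré--Bendixson theorem each is either a periodic orbit, necessarily $\Gamma$ by Theorem \ref{TH2.8}, or it contains an equilibrium; the only equilibrium in $\mathrm{Int}(\mathbb{R}^2_+)$ is $e$, and since $e$ is a source it carries no homoclinic loop, so a limit set containing an equilibrium reduces to $\{e\}$. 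But $e\in D$ and $D$ is open, so if $\{e\}$ were the $\alpha$- or $\omega$-limit set of $q$ then $O$ would meet $D$, forcing $O\subset D$ and contradicting $q\notin\overline D$. Hence necessarily $\alpha(q)=\omega(q)=\Gamma$.

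It remains to exclude this last case, which is the only delicate point. An orbit $O$ lying strictly outside $\Gamma$ with both its $\alpha$- and $\omega$-limit sets equal to $\Gamma$ must coincide with $\Gamma$: choosing a transversal segment $\Sigma$ to the vector field meeting $\Gamma$ at a single point $z$, the successive intersections of $O$ with $\Sigma$ form a bi-infinite monotone sequence along $\Sigma$ (the monotonicity lemma underlying Poincaré--Bendixson), converging to $z$ as the index tends to $+\infty$ because $\omega(O)=\Gamma$ and to $z$ as it tends to $-\infty$ because $\alpha(O)=\Gamma$; a monotone sequence both of whose ends converge to the same point is eventually constant, so $O\cap\Sigma=\{z\}\subset\Gamma$ and thus $O=\Gamma$. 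Equivalently, this is the statement that the Poincaré return map of the asymptotically stable orbit $\Gamma$ is a strictly monotone contraction near its fixed point, so no orbit outside $\Gamma$ can be backward asymptotic to $\Gamma$. This contradicts $q\notin\overline D$, and therefore $\mathcal{A}_{\mathrm{Int}(\mathbb{R}^2_+)}=\overline D=\Gamma\cup D$, i.e. the periodic orbit together with all points it surrounds. The main obstacle is exactly this backward-time exclusion; the remainder is bookkeeping with the Jordan curve theorem and with the characterization of the attractor as the maximal compact invariant set.
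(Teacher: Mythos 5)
The paper gives no standalone proof of this proposition; it is presented as a direct consequence of Theorem \ref{TH2.8} (uniqueness of the stable periodic orbit) and Proposition \ref{PROP2.10}, and the text moves on immediately to the summary theorem. Your proof supplies the missing details correctly and along the route the authors evidently have in mind: identify $\mathcal{A}_{\mathrm{Int}(\mathbb{R}^2_+)}$ with the maximal compact invariant subset of the interior, use the Jordan curve theorem to see that $\overline D=\Gamma\cup D$ is a compact invariant subset of the interior (giving $\overline D\subseteq\mathcal{A}$), and then rule out any complete orbit lying outside $\Gamma$ with compact closure in the interior via Poincar\'e--Bendixson. Two small points are worth noting. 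First, you correctly avoid the trap of dismissing the case $\omega(q)=\{e\}$ merely ``because $e$ is a source'': what you actually use, and what works, is that $e\in D$ while $O$ must be disjoint from the invariant set $\overline D$. Second, the backward-time exclusion (no orbit outside $\Gamma$ can have $\alpha(q)=\omega(q)=\Gamma$) is indeed the only delicate step, and your bi-infinite monotone transversal-sequence argument is the clean way to do it; the parenthetical gloss about the Poincar\'e return map being a ``strictly monotone contraction'' is loose phrasing (asymptotic stability alone need not force a contraction estimate), but the transversal argument that precedes it is what carries the proof and it is correct.
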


Recall that Assumption \ref{ASS2.1} is satisfied. Firstly we summarize the above result by the description of the interior attractor $\mathcal{A}_{\mathrm{Int} \left( \mathbb{R}^2_+\right)}$ depending on the parameters. The following result is a direct consequence of the above results.

\begin{theorem}[Interior attractor]
The following holds.
\begin{itemize}
\item[{\rm (i)}] Assume that
\begin{equation*}
1<\gamma \left( \beta -1\right) <\beta +1.
\end{equation*}
Then the interior attractor $\mathcal{A}_{\mathrm{Int} \left( \mathbb{R}^2_+\right)}$ reduces to the interior equilibrium.

\item[{\rm (ii)}] Assume that $\gamma \left( \beta -1\right) >\beta +1$. Then the interior attractor $\mathcal{A}_{\mathrm{Int} \left( \mathbb{R}^2_+\right)}$ consists of the unique interior equilibrium, the unique interior
periodic orbit and an infinite number of heteroclinic orbits joining the unique interior equilibrium and the unique periodic orbit.
\end{itemize}
\end{theorem}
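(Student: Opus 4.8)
The plan is to obtain both items directly from the structural results already proved, together with the Poincar\'e--Bendixson theorem and the local description of the interior equilibrium in Lemma \ref{LE2.2}. Write $E^\star=(\overline U_2,\overline V_2)$; by Assumption \ref{ASS2.1} this is the unique equilibrium of \eqref{2.1} lying in $\mathrm{Int}(\R^2_+)$. Recall from Proposition \ref{PROP2.10} that $\mathcal A_{\mathrm{Int}(\R^2_+)}$ is a compact connected invariant set; being a global attractor for the semiflow restricted to the open quadrant, it is a compact \emph{subset} of $\mathrm{Int}(\R^2_+)$, hence bounded away from $\partial\R^2_+$, and by invariance every point of it lies on a complete orbit contained in $\mathcal A_{\mathrm{Int}(\R^2_+)}$.

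Consider first item (i), so $1<\gamma(\beta-1)<\beta+1$. By Theorem \ref{TH2.6} one has $\omega(x)=\{E^\star\}$ for every $x\in\mathrm{Int}(\R^2_+)$, so in particular $E^\star\in\mathcal A_{\mathrm{Int}(\R^2_+)}$. Let $x\in\mathcal A_{\mathrm{Int}(\R^2_+)}$ and look at its complete orbit, which stays in $\mathcal A_{\mathrm{Int}(\R^2_+)}\subset\mathrm{Int}(\R^2_+)$; then $\alpha(x)$ is a nonempty, compact, connected, invariant subset of $\mathrm{Int}(\R^2_+)$ whose only equilibrium is $E^\star$. By Poincar\'e--Bendixson, $\alpha(x)$ is $\{E^\star\}$, a periodic orbit, or a homoclinic loop based at $E^\star$; the periodic case is excluded by Theorem \ref{TH2.6} (Dulac's criterion), and the homoclinic case because $E^\star$ is a sink by Lemma \ref{LE2.2}. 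Hence $\alpha(x)=\{E^\star\}=\omega(x)$, so the orbit of $x$ would be homoclinic to the sink $E^\star$; but a nonconstant orbit entering a small neighbourhood of a sink leaves it in backward time and cannot return, so cannot accumulate on $E^\star$ as $s\to-\infty$. Therefore $x=E^\star$, i.e. $\mathcal A_{\mathrm{Int}(\R^2_+)}=\{E^\star\}$.

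Turn now to item (ii), so $\gamma(\beta-1)>\beta+1$. By Lemma \ref{LE2.2}, $E^\star$ is a source; by Theorem \ref{TH2.8} there is a unique periodic orbit $\mathcal P$, surrounding $E^\star$ and orbitally stable; and by the last proposition above, $\mathcal A_{\mathrm{Int}(\R^2_+)}=\mathcal P\cup\mathrm{int}(\mathcal P)$, where $\mathrm{int}(\mathcal P)$ is the bounded open region enclosed by $\mathcal P$. Fix $x\in\mathrm{int}(\mathcal P)\setminus\{E^\star\}$. Since $\mathrm{int}(\mathcal P)$ is forward and backward invariant (its boundary is the orbit $\mathcal P$), the complete orbit of $x$ is bounded and $\omega(x),\alpha(x)$ are nonempty compact connected invariant subsets of $\overline{\mathrm{int}(\mathcal P)}$. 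As $E^\star$ is a source it is neither an $\omega$-limit set nor the base of a homoclinic loop, and $\mathcal P$ is the only periodic orbit; Poincar\'e--Bendixson therefore forces $\omega(x)=\mathcal P$ and $\alpha(x)\in\{\{E^\star\},\mathcal P\}$. The case $\alpha(x)=\mathcal P$ is ruled out: fixing a transversal segment $\Sigma$ to the flow at a point of $\mathcal P$ reaching into $\mathrm{int}(\mathcal P)$, the successive crossings of the orbit of $x$ with $\Sigma$ are monotone along $\Sigma$, hence cannot accumulate on $\mathcal P\cap\Sigma$ both as $s\to+\infty$ and as $s\to-\infty$ (equivalently, the orbitally stable cycle $\mathcal P$ of Theorem \ref{TH2.8} cannot be an $\alpha$-limit set). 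Thus $\alpha(x)=\{E^\star\}$ and $\omega(x)=\mathcal P$, so $\mathrm{int}(\mathcal P)\setminus\{E^\star\}$ is the disjoint union of heteroclinic orbits from $E^\star$ to $\mathcal P$, and $\mathcal A_{\mathrm{Int}(\R^2_+)}=\{E^\star\}\cup(\text{these heteroclinic orbits})\cup\mathcal P$. Finally each such orbit is a smooth injectively immersed curve, hence of two-dimensional Lebesgue measure zero, whereas $\mathrm{int}(\mathcal P)\setminus\{E^\star\}$ has positive measure; a countable union of null sets being null, there must be uncountably many — in particular infinitely many — heteroclinic orbits, which finishes (ii).

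The routine parts are the invariance bookkeeping and the invocations of Poincar\'e--Bendixson. The only genuinely delicate step is excluding $\alpha(x)=\mathcal P$ in (ii) — showing that no orbit inside the limit cycle is backward-asymptotic to it — which is where the monotonicity of the first-return map on a transversal, or, alternatively, the orbital stability of $\mathcal P$ from Theorem \ref{TH2.8}, does the work.
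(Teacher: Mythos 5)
Your proof is correct and carries out, in detail, exactly what the paper regards as a ``direct consequence'' of Theorem~\ref{TH2.6}, Theorem~\ref{TH2.8} and the preceding proposition on $\mathcal{A}_{\mathrm{Int}(\mathbb{R}^2_+)}$, so the route is the same one the paper intends. The Poincar\'e--Bendixson bookkeeping for the $\alpha$- and $\omega$-limit sets of complete orbits in the attractor, the sink/source dichotomy from Lemma~\ref{LE2.2}, the monotone transversal-crossing argument ruling out $\alpha(x)=\mathcal{P}$, and the measure-zero count of heteroclinic orbits are precisely the details the paper leaves to the reader.
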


To complete this section, we are able to describe the global attractor $\mathcal{A}_{\mathbb{R}^2_+}$.
Our result reads as follows.
\begin{theorem}[Global attractor]
System \eqref{2.1} admits a unique heteroclinic orbit $(U,V)$ joining $(\gamma,0)$ to the boundary of the interior attractor $\mathcal{A}_{\mathrm{Int} \left( \mathbb{R}^2_+
\right)}$. The global attractor $\mathcal{A}_{\mathbb{R}^2_+}$ is composed of 3 disjoint parts
\begin{equation*}
\mathcal{A}_{\mathbb{R}^2_+}=[0,\gamma]\times\{0\} \bigcup \left\{(U(t),V(t)),\;t\in\R\right\}\bigcup \mathcal{A}_{\mathrm{Int} \left( \mathbb{R}^2_+
\right)}.
\end{equation*}
\end{theorem}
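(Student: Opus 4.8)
The plan is to realise the desired heteroclinic orbit as the unique branch of the unstable manifold of the saddle $(\gamma,0)$ that enters the open first quadrant, to determine its $\omega$-limit set by combining the Poincar\'e--Bendixson theorem with the description of $\mathcal{A}_{\mathrm{Int}\left(\mathbb{R}^2_+\right)}$ obtained above, and finally to recover the decomposition of $\mathcal{A}_{\mathbb{R}^2_+}$ by classifying all bounded complete orbits of \eqref{2.1}. \emph{Step 1 (construction of the orbit $\Gamma$).} First I would linearise \eqref{2.1} at $(\gamma,0)$. Using $f(\gamma)=0$, the Jacobian is upper triangular with eigenvalues $-\alpha\gamma<0$ (eigenvector along the $U$-axis) and $\mu:=\big(\gamma(\beta-1)-1\big)/(1+\gamma)>0$ by Assumption~\ref{ASS2.1}, whose eigenvector has both entries positive. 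Hence $(\gamma,0)$ is a hyperbolic saddle whose local stable manifold is the $U$-axis and whose local unstable manifold has exactly one branch pointing into $\mathrm{Int}\left(\mathbb{R}^2_+\right)$; since $\mathrm{Int}\left(\mathbb{R}^2_+\right)$ is positively invariant, this branch extends to a complete orbit $\Gamma=\{(U(t),V(t)):t\in\mathbb{R}\}\subset\mathrm{Int}\left(\mathbb{R}^2_+\right)$ with $(U(t),V(t))\to(\gamma,0)$ as $t\to-\infty$, while the other branch leaves $\mathbb{R}^2_+$ through $\{V<0\}$. Because $(\gamma,0)$ sits in the interior of $\mathbb{T}$ for $R$ large, the negative-time tail of $\Gamma$ enters $\mathbb{T}$, so $\Gamma\subset\mathbb{T}$ by Proposition~\ref{PROP2.4}; thus $\Gamma$ is bounded, $\omega(\Gamma)\neq\emptyset$, and by the uniform persistence of Proposition~\ref{PROP2.5} one has $\omega(\Gamma)\subset\mathrm{Int}\left(\mathbb{R}^2_+\right)$, hence $\omega(\Gamma)\subset\mathcal{A}_{\mathrm{Int}\left(\mathbb{R}^2_+\right)}$ since $\omega(\Gamma)$ is compact, connected and invariant.

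\emph{Steps 2--3 (the heteroclinic and its uniqueness).} I would then split according to the two regimes described above for the interior attractor. If $1<\gamma(\beta-1)<\beta+1$, Theorem~\ref{TH2.6} forces $\omega(\Gamma)=\{(\overline{U}_2,\overline{V}_2)\}$, which here is $\partial\mathcal{A}_{\mathrm{Int}\left(\mathbb{R}^2_+\right)}$. If $\gamma(\beta-1)>\beta+1$, then by Lemma~\ref{LE2.2} the interior equilibrium is a source, hence has no stable manifold, so it cannot lie in $\omega(\Gamma)$ and no homoclinic loop to it can occur; by Poincar\'e--Bendixson $\omega(\Gamma)$ is then a periodic orbit, which by Theorem~\ref{TH2.8} is the unique one, i.e.\ $\omega(\Gamma)=\partial\mathcal{A}_{\mathrm{Int}\left(\mathbb{R}^2_+\right)}$. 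In both cases $\Gamma$ joins $(\gamma,0)$ to $\partial\mathcal{A}_{\mathrm{Int}\left(\mathbb{R}^2_+\right)}$. For uniqueness, any orbit of \eqref{2.1} in $\mathbb{R}^2_+$ whose $\alpha$-limit set equals $\{(\gamma,0)\}$ lies on the one-dimensional unstable manifold of $(\gamma,0)$, which meets $\mathbb{R}^2_+$ only along $\{(\gamma,0)\}\cup\Gamma$, and therefore coincides with $\Gamma$.

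\emph{Step 4 (decomposition of $\mathcal{A}_{\mathbb{R}^2_+}$).} Recall that $\mathcal{A}_{\mathbb{R}^2_+}$ consists precisely of the points lying on a bounded complete orbit of \eqref{2.1}. On $\partial_V\mathbb{R}^2_+$ one has $V'=-V$, so the only bounded complete orbit there is $\{(0,0)\}$; on $\partial_U\mathbb{R}^2_+$ one has $U'=\alpha U(\gamma-U)$, whose bounded complete orbits fill $[0,\gamma]\times\{0\}=\mathcal{A}_{\partial_U\mathbb{R}^2_+}$. Let now $\mathcal{O}$ be a bounded complete orbit contained in $\mathrm{Int}\left(\mathbb{R}^2_+\right)$ and consider its $\alpha$-limit set, which is compact, connected, invariant and contained in $\mathcal{A}_{\mathbb{R}^2_+}$. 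I would check that $(0,0)\notin\alpha(\mathcal{O})$ (its unstable set lies in $\partial_U\mathbb{R}^2_+$) and that $\alpha(\mathcal{O})$ can be neither a polycycle nor a homoclinic loop: there is no homoclinic orbit at $(0,0)$, nor at $(\gamma,0)$ (the only orbit leaving $(\gamma,0)$ into the interior is $\Gamma$, which by Step~1 does not return), nor at $(\overline{U}_2,\overline{V}_2)$ (a hyperbolic sink or source by Lemma~\ref{LE2.2}), and there is no heteroclinic cycle among the three equilibria. By Poincar\'e--Bendixson, $\alpha(\mathcal{O})$ is thus $\{(\gamma,0)\}$, $\{(\overline{U}_2,\overline{V}_2)\}$ or the unique periodic orbit: in the first case $\mathcal{O}=\Gamma$ by Step~3; in the remaining two cases $\mathcal{O}$ lies in the backward basin of that set, which equals $\mathcal{A}_{\mathrm{Int}\left(\mathbb{R}^2_+\right)}$. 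Hence $\mathcal{A}_{\mathbb{R}^2_+}\subseteq[0,\gamma]\times\{0\}\cup\Gamma\cup\mathcal{A}_{\mathrm{Int}\left(\mathbb{R}^2_+\right)}$, and the reverse inclusion is immediate since each piece is bounded and invariant. The three pieces are pairwise disjoint: the first lies in $\{V=0\}$ and the other two in $\mathrm{Int}\left(\mathbb{R}^2_+\right)$, while $\Gamma\cap\mathcal{A}_{\mathrm{Int}\left(\mathbb{R}^2_+\right)}=\emptyset$ because otherwise invariance of $\mathcal{A}_{\mathrm{Int}\left(\mathbb{R}^2_+\right)}$ would force $\Gamma\subset\mathcal{A}_{\mathrm{Int}\left(\mathbb{R}^2_+\right)}$, hence $(\gamma,0)\in\mathcal{A}_{\mathrm{Int}\left(\mathbb{R}^2_+\right)}\subset\mathrm{Int}\left(\mathbb{R}^2_+\right)$, which is absurd.

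The main obstacle is Step~4, namely excluding ``exotic'' bounded complete orbits in the open quadrant, i.e.\ ruling out polycycles and non-trivial graphics as $\alpha$-limit sets. This is exactly the point where the source nature of the interior equilibrium (Lemma~\ref{LE2.2}), the uniqueness of the periodic orbit (Theorem~\ref{TH2.8}), and the already-established fate of $\Gamma$ (Steps~1--2) must be fed into the Poincar\'e--Bendixson trichotomy; everything else is either linear algebra at the equilibria or a direct application of the persistence and global-attractor statements proved in this section.
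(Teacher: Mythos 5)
Your proposal is correct in its conclusions and fills the same role as the paper's proof, but it follows a genuinely different route for the existence part and for excluding heteroclinics emanating from $(0,0)$.

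\begin{itemize}
\item \emph{Existence.} The paper obtains the heteroclinic indirectly, by a connectedness argument for the global attractor: since $\mathcal A_{\mathbb R^2_+}$ is connected and contains the (disjoint, compact) sets $\mathcal A_{\partial_U\mathbb R^2_+}$ and $\mathcal A_{\mathrm{Int}(\mathbb R^2_+)}$, the difference is non-empty, and Hale--Waltman's Theorem 3.2 forces the $\alpha$-limit of any point in that difference to lie in the boundary attractor and its $\omega$-limit in the interior attractor; the Morse decomposition $\{(0,0)\},\{(\gamma,0)\}$ then localises the $\alpha$-limit set. You instead produce the orbit constructively as the unique branch of the unstable manifold of the hyperbolic saddle $(\gamma,0)$ that enters $\mathrm{Int}(\mathbb R^2_+)$, and identify its $\omega$-limit through uniform persistence and Poincar\'e--Bendixson. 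Your approach is more elementary and explicit; the paper's is softer and would survive without a planar phase-portrait argument.
\item \emph{Exclusion of the heteroclinic from $(0,0)$.} The paper integrates the $V$-equation backward in time and shows $V(t)\to+\infty$ as $t\to-\infty$, contradicting boundedness in the attractor. You instead invoke the structure of $W^u((0,0))\subset\partial_U\mathbb R^2_+$, so no interior orbit can have $\alpha$-limit $\{(0,0)\}$. Both are valid; the manifold argument is slightly cleaner.
\item \emph{Uniqueness.} Here the two proofs coincide in substance: the unstable direction at $(\gamma,0)$ is one-dimensional, so any orbit asymptotic to $(\gamma,0)$ as $t\to-\infty$ must lie on that one-dimensional manifold and hence coincide with $\Gamma$. (The paper phrases this in terms of a ``center-unstable'' manifold, but since $(\gamma,0)$ is hyperbolic this is just the unstable manifold.)
\item \emph{Decomposition.} You classify all bounded complete orbits directly via Poincar\'e--Bendixson after ruling out graphics; the paper gets the same decomposition as a byproduct of the Hale--Waltman machinery. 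Your ``no polycycle'' step is compressed (it silently uses that the stable/unstable manifolds of the boundary equilibria live on the coordinate axes, that the interior equilibrium is hyperbolic, and that $\Gamma$ does not return to the boundary), but these points are all available from earlier results in the section and your sketch is essentially complete.
\end{itemize}

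One small factual slip, harmless to the argument: you assert that the unstable eigenvector at $(\gamma,0)$ ``has both entries positive.'' In fact the Jacobian there is upper triangular with a strictly negative off-diagonal entry $-\gamma/(1+\gamma)$, and one computes that the eigenvector associated with $\mu=\big(\gamma(\beta-1)-1\big)/(1+\gamma)>0$ is proportional to $(-c,1)$ with $c>0$ (this is exactly the span $E_{\lambda_2}$ written in the paper's proof). The entries have opposite signs. What you actually use --- that one branch enters $\{U<\gamma,\ V>0\}$ and the other exits through $\{V<0\}$ --- is still correct, so the conclusion stands, but the stated sign pattern should be corrected.
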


\begin{proof}
The proof of this result requires three steps. We firstly derive the existence of heteroclinic orbits using a connectedness argument for the global attractor. Then we show that heteroclinic orbits starts from the stationary point $(\gamma,0)$ and finally we conclude to the uniqueness of such heroclinic orbit by using a center unstable manifold argument (see \cite{Ducrot-Langlais-Magal} where a rather similar argument was used to derive a uniqueness property for traveling wave solutions arising in some epidemic problem).

\noindent \textbf{Connectedness arguments:} The largest global attractor  $\mathcal{A}_{\mathbb{R}^2_+}$  is connected since it attracts the convex subset $\mathbb{T}$. Since any continuous map maps a connected set into a connected set, it follows that the projection of $\mathcal{A}_{\mathbb{R}^2_+}$ on the horizontal and vertical axis is a compact interval (since a one dimensional compact connected set is an compact interval).

The global attractor $\mathcal{A}_{\mathbb{R}^2_+}$  contains the interior global attractor  $\mathcal{A}_{\mathrm{Int} \left( \mathbb{R}^2_+\right)}$ which is compact, connected and locally stable. The global attractor $\mathcal{A}_{\mathbb{R}^2_+}$ also contains the boundary attractor $\mathcal{A}_{\partial_U \mathbb{R}^2_+}$. The connectedness of $\mathcal{A}_{\mathbb{R}^2_+} $  and compactness  of $\mathcal{A}_{\mathrm{Int} \left( \mathbb{R}^2_+\right)}$ and $\mathcal{A}_{\partial_U \mathbb{R}^2_+} $ imply
$$
\mathcal{A}_{\mathbb{R}^2_+} - \left(\mathcal{A}_{\mathrm{Int} \left( \mathbb{R}^2_+\right)} \bigcup  \mathcal{A}_{\partial_U \mathbb{R}^2_+} \right) \neq \emptyset.
$$
Moreover by using Theorem 3.2 due to Hale and Waltman \cite{Hale-Waltman} we deduce that for each point $(U,V) \in \mathcal{A}_{\mathbb{R}^2_+} - \left(\mathcal{A}_{\mathrm{Int} \left( \mathbb{R}^2_+\right)} \bigcup  \mathcal{A}_{\partial_U \mathbb{R}^2_+} \right)$
the alpha and limit sets  satisfy the following
$$
\alpha(U,V) \in \mathcal{A}_{\partial_U \mathbb{R}^2_+} \text{ and } \omega(U,V) \in \mathcal{A}_{\mathrm{Int} \left( \mathbb{R}^2_+\right)}.
$$
Finally since the boundary attractor has a Morse decomposition $M_1=\left\lbrace (0,0) \right \rbrace$ and $M_2=\left\lbrace (\gamma,0) \right \rbrace$ we have either
$$
\alpha(U,V) =M_1 \text{ or } \alpha(U,V) =M_2, \forall (U,V) \in \mathcal{A}_{\mathbb{R}^2_+} - \left(\mathcal{A}_{\mathrm{Int} \left( \mathbb{R}^2_+\right)} \bigcup  \mathcal{A}_{\partial_U \mathbb{R}^2_+} \right).
$$

\noindent \textbf{No existence of heteroclinic orbit starting from $(0,0)$:  }Assume by contradiction that there exists one. By looking the $V$-equation
$$
 V^{\prime }=\left( -1+ \dfrac{U}{1+U} \right)V,
$$
we deduce that
$$
V(t)=\exp \left( \int_{t_0}^t  -1+ \dfrac{U(s)}{1+U(s)}  ds \right) V(t_0).
$$
Since $V(t_0)>0$ and there exists $T<0$ such that $U(t)$ remains sufficiently small for all negative times $t<T$, we deduce that
$$
\lim_{t \to -\infty}V(t)=+\infty
$$
which contradicts the fact that the solution belongs to the global attractor and is therefore bounded.

\bigskip
\noindent \textbf{Existence and uniqueness of an heteroclinic orbit starting from $(\gamma,0)$:  }
We only need to prove the uniqueness. The linearized equation around $(\gamma,0)$ has two eigenvalues: $\lambda _{1}=-\alpha \gamma<0$ and $\lambda _{2}=\frac{\beta \gamma}{1+\gamma}-1>0$. The center-unstable manifold at $(\gamma ,0)$
is one dimensional. Note that%
\begin{equation*}
E_{\lambda _{1}}=\left\{ \left( U,V\right) \in \mathbb{R}^{2}:V=0\right\}
\end{equation*}%
and%
\begin{equation*}
E_{\lambda _{2}}=\left\{ \left( U,V\right) \in \mathbb{R}^{2}:U-\gamma =-%
\frac{\gamma }{\gamma \left( \beta -1\right) -1+\alpha \,\gamma \left(
1+\gamma \right) }V\right\}
\end{equation*}%
with $\frac{\gamma }{\gamma \left( \beta -1\right) -1+\alpha \,\gamma \left(
1+\gamma \right) }>0.$ Note that $\mathbb{R}^{2}=E_{\lambda _{1}}\bigoplus E_{\lambda _{2}}.$ \\
Let $\psi _{cu}:$ $E_{\lambda _{2}}\rightarrow
E_{\lambda _{1}}$ be a $C^{1}$ center-unstable manifold and consider the one
dimensional manifold defined by%
\begin{equation*}
M_{cu}:=\{x_{cu}+\psi _{cu}(x_{cu}):x_{cu}\in E_{\lambda _{2}}\}.
\end{equation*}%
It is locally invariant under the semiflow generated by \eqref{2.1} around $%
(\gamma ,0)$. Since $D_{x_{cu}}\psi _{cu}(0)=0,$ the manifold $M_{cu}$ is
tangent to $E_{\lambda _{2}}$ at $(\gamma ,0)$. Moreover we know that there
exists $\varepsilon >0$, such that $M_{cu}$ contains all negative orbits of
the semiflow generated by \eqref{2.1} staying in the ball $B_{\mathbb{R}^{2}}((\gamma ,0),\varepsilon)$
for all negative times.

In order to prove the uniqueness, we assume that
there exists two heteroclinic orbits
$$O_{1} = {(U_{1}(t), V_{1}(t))}_{t\in \mathbb{R}}\subset {\mathrm{Int} \left( \mathbb{R}^2_+\right)}$$
and
$$O_{2} = {(U_{2}(t), V_{2}(t))}_{t\in \mathbb{R}}\subset {\mathrm{Int} \left( \mathbb{R}^2_+\right)}$$
going from $(\gamma,0)$ to the interior attractor $\mathcal{A}_{\mathrm{Int} \left( \mathbb{R}^2_+\right)}$.
Since
$$\lim_{t\rightarrow -\infty}(U_{1}(t), V_{1}(t))=(\gamma,0)$$
and
$$\lim_{t\rightarrow -\infty}(U_{2}(t), V_{2}(t))=(\gamma,0),$$
without loss of generality, one may assume that
$${(U_{1}(t), V_{1}(t))}_{t\leq 0}\subset B_{\mathbb{R}^{2}}((\gamma ,0),\varepsilon )$$
and
$${(U_{2}(t), V_{2}(t))}_{t\leq 0}\subset B_{\mathbb{R}^{2}}((\gamma ,0),\varepsilon )$$
which imply that
$${(U_{1}(t), V_{1}(t))}_{t\leq 0}\subset M_{cu}$$
and
$${(U_{2}(t), V_{2}(t))}_{t\leq 0}\subset M_{cu}.$$
Let $\Pi_{\lambda_{1}}$ and $\Pi_{\lambda_{2}}$ be the linear projectors from $\mathbb{R}^{2}$ to $E_{\lambda _{1}}$ and $E_{\lambda _{2}}$, respectively.
We can find $t_{1} < 0$ and $t_{2} < 0$ such that $\Pi_{\lambda_{2}}(U_{1}(t_{1}), V_{1}(t_{1})) =\Pi_{\lambda_{2}}(U_{2}(t_{2}), V_{2}(t_{2}))$ and then $\psi _{cu}(\Pi_{\lambda_{2}}(U_{1}(t_{1}), V_{1}(t_{1})))=\psi _{cu}(\Pi_{\lambda_{2}}(U_{2}(t_{2}), V_{2}(t_{2})))$. Thus $(U_{1}(t_{1}), V_{1}(t_{1}))=(U_{2}(t_{2}), V_{2}(t_{2}))$.
By the uniqueness of the solutions for system \eqref{2.1}, we get $(U_{1}(t_{1}+\cdot), V_{1}(t_{1}+\cdot))=(U_{2}(t_{2}+\cdot), V_{2}(t_{2}+\cdot))$ and thus $O_{1}= O_{2}$.  The uniqueness of the heteroclinic orbit starting from $(\gamma,0)$ follows and this completes the proof of the theorem.

\end{proof}

\begin{figure}[H]
		\centering
		\textbf{(a)}\\
		\includegraphics[width=5in,height=3in]{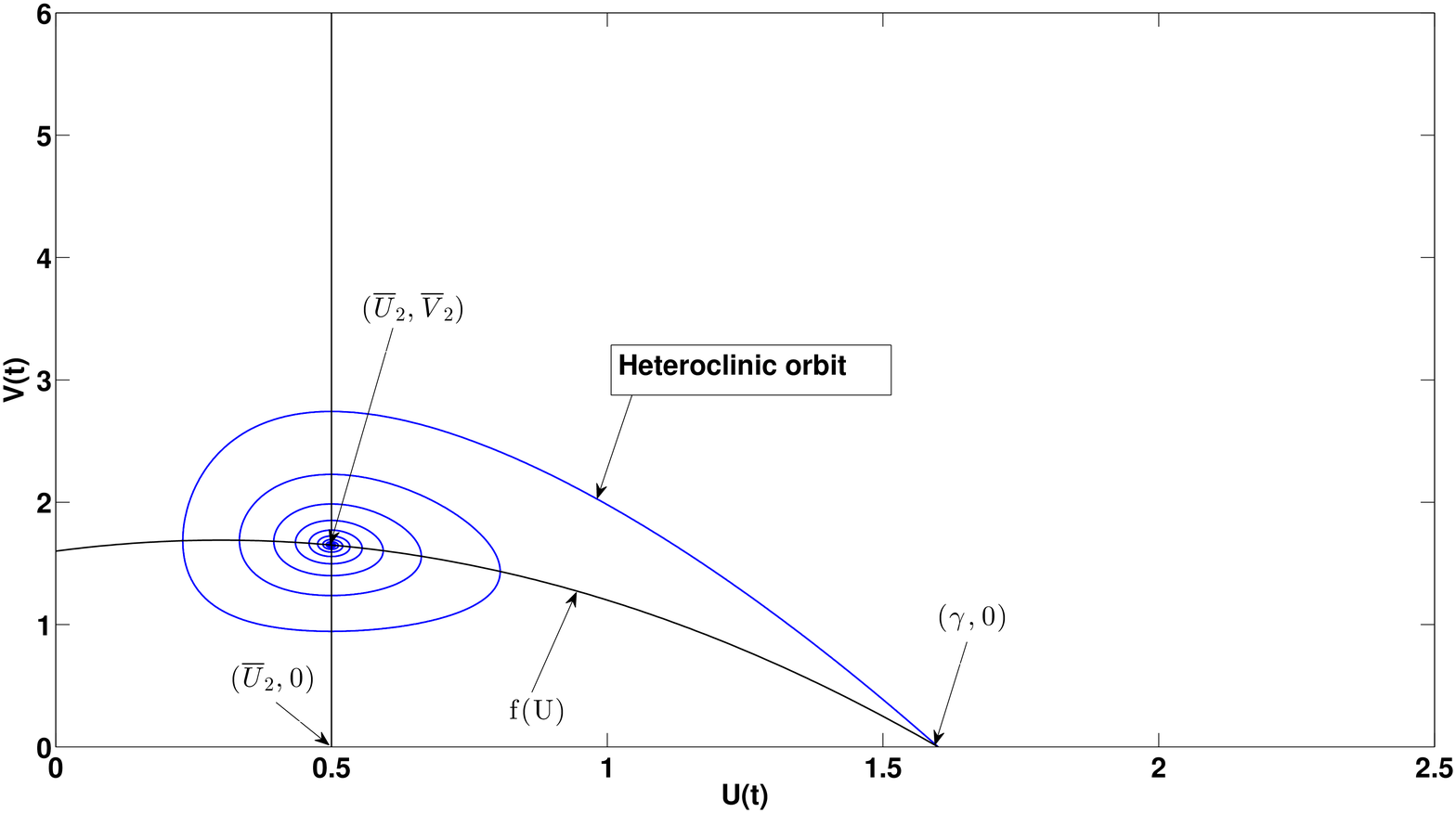}\\
		\textbf{(b)}\\
		\includegraphics[width=5in,height=3in]{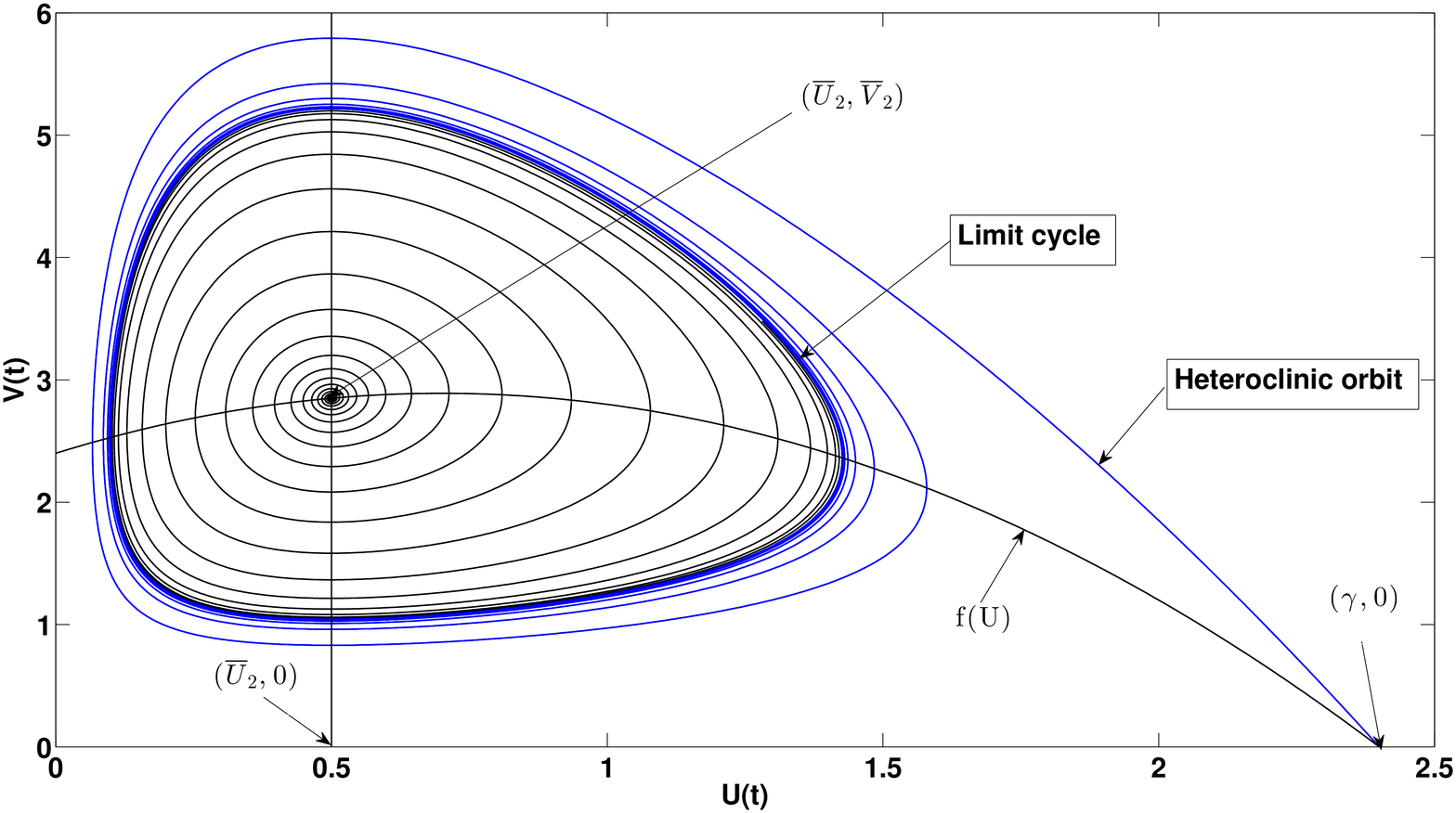}
		\caption{\textit{In  this figure we run a simulation of the  Rosenzweig-MacArthur model with $\alpha=1$, $\beta=3$ and $\gamma=1.6$ (in Figure (a)) and $\gamma=2.4$ (in Figure (b)). In both figures we plot the heteroclinic orbit joining the boundary equilibrium and the interior equilibrium (in Figure (a)) and the interior limit cycle which is a stable periodic orbit (in Figure (b)). In this figure we also plot the nullclines $f(U)=\alpha (\gamma -U)\left( 1+U\right) $ and $U=\overline{U}_2$. }}\label{fig1}
\end{figure}	

\section{Application of a center manifold theorem to the traveling wave problem}

This section is devoted to the study of traveling wave profile system of equations \eqref{1.7} for $\varepsilon\ll 1$.  We will firstly apply a center manifold reduction on a suitable invariant region. The reduced system will be analysed. In the same spirit as in the previous section we will describe its global and interior attractor to obtain various results about the existence and uniqueness of traveling wave solutions as well as refined information about periodic wave trains.

\subsection{Reduction of the traveling wave problem}

\noindent \textbf{Transformed system:} In order to work with a subspace of
equilibria for $\varepsilon=0$ we use the following change of variable
\begin{equation}  \label{3.1}
\left\lbrace
\begin{array}{lll}
U_1 & = & u_1, \\
U_2 & = & u_2+F(u_1,v_1), \\
V_1 & = & v_1, \\
V_2 & = & v_2+G(u_1,v_1)%
\end{array}
\right. \Leftrightarrow \left\lbrace
\begin{array}{lll}
u_1 & = & U_1, \\
u_2 & = & U_2-F(U_1,V_1), \\
v_1 & = & V_1, \\
v_2 & = & V_2 -G(U_1,V_1).%
\end{array}
\right.
\end{equation}
By using this change of variable the system \eqref{1.7} becomes
\begin{equation*}
\left\lbrace
\begin{array}{lll}
U_1^{\prime } & = & \varepsilon u_2 =\varepsilon \left[U_2-F(U_1,V_1) \right],
\\
d U_2^{\prime } & = & d u_2^{\prime }+d F(U_1,V_1)^{\prime }=-U_2+d
\partial_u F(U_1,V_1) U_1^{\prime }+ d\partial_v F(U_1,V_1) V_1^{\prime }, \\
V_1^{\prime } & = & \varepsilon v_2=\varepsilon \left[V_2-G(U_1,V_1) \right],
\\
V_2^{\prime } & = & v_2^{\prime }+G(U_1,V_1)^{\prime }= -V_2 +\partial_u
G(U_1,V_1) U_1^{\prime }+\partial_v G(U_1,V_1) V_1^{\prime }%
\end{array}
\right.
\end{equation*}
and therefore we obtain

\begin{equation}  \label{3.2}
\left\lbrace
\begin{array}{lll}
U_1^{\prime } & = & \varepsilon \left[U_2-F(U_1,V_1) \right], \\
dU_2^{\prime } & = & -U_2+\varepsilon d P(U_1,U_2,V_1,V_2), \\
V_1^{\prime } & = & \varepsilon \left[V_2-G(U_1,V_1) \right], \\
V_2^{\prime } & = & -V_2+\varepsilon Q(U_1,U_2,V_1,V_2),%
\end{array}
\right.
\end{equation}
wherein $P$ and $Q$ are given by
\begin{equation*}
P(U_1,U_2,V_1,V_2)=\partial_u F(U_1,V_1) \left[U_2-F(U_1,V_1) \right]%
+\partial_v F(U_1,V_1)\left[V_2-G(U_1,V_1) \right]
\end{equation*}
and
\begin{equation*}
Q(U_1,U_2,V_1,V_2)=\partial_u G(U_1,V_1) \left[U_2-F(U_1,V_1) \right]
+\partial_v G(U_1,V_1) \left[V_2-G(U_1,V_1) \right].
\end{equation*}

\bigskip

\noindent \textbf{Truncated system:} Let $\rho :\mathbb{R} \to \mathbb{R}$ be a $%
C^\infty$ function such that
\begin{equation*}
\rho(x) = \left\lbrace
\begin{array}{l}
1, \text{ if } x \geq 0, \\
\in [0,1], \text{ if } x \in [-1/2,0], \\
0, \text{ if } x \leq -1/2.%
\end{array}
\right.
\end{equation*}
Define the set
\begin{equation*}
\mathbb{E}=\left\lbrace (U_1,U_2,V_1,V_2) \in \mathbb{R}^4: (U_1,V_1) \in
\mathbb{T} \text{ and } \vert U_2-F(U_1,V_1) \vert \leq 1, \vert
V_2-G(U_1,V_1) \vert \leq 1 \right\rbrace.
\end{equation*}
Let $L>0$ be given large enough such that
\begin{equation*}
\begin{array}{ll}
L  \geq & 2+\displaystyle \max_{(U_1,V_1) \in \mathbb{T}} \vert F(U_1,V_1)
\vert+ \max_{(U_1,V_1) \in \mathbb{T}} \vert G(U_1,V_1) \vert \\
& +\displaystyle \max_{(U_1,U_2,V_1,V_2) \in \mathbb{E} } \vert
P(U_1,U_2,V_1,V_2) \vert \\
& +\displaystyle \max_{(U_1,U_2,V_1,V_2) \in \mathbb{E}} \vert
Q(U_1,U_2,V_1,V_2) \vert.%
\end{array}%
\end{equation*}
Let $\chi :\mathbb{R} \to \mathbb{R}$ be a $C^\infty$ function such that
\begin{equation*}
\chi(x) = \left\lbrace
\begin{array}{l}
1, \text{ if } x \in [-L,L], \\
\in [0,1], \text{ if } x \in [-(L+1),-L] \cup [L,(L+1)], \\
0, \text{ if } x \notin [-(L+1),(L+1)].%
\end{array}
\right.
\end{equation*}
Then we have
\begin{equation}
\left\lbrace \label{3.3}
\begin{array}{lll}
U_1^{\prime } & = & \varepsilon \left[U_2-F(U_1,V_1) \right]
\chi(U_2-F(U_1,V_1) ) \rho(U_1), \\
dU_2^{\prime } & = & -U_2+\varepsilon d P(U_1,U_2,V_1,V_2)
\chi(P(U_1,U_2,V_1,V_2) ) \rho(U_1), \\
V_1^{\prime } & = & \varepsilon \left[V_2-G(U_1,V_1) \right]
\chi(V_2-G(U_1,V_1) ) \rho(U_1), \\
V_2^{\prime } & = & -V_2+\varepsilon Q(U_1,U_2,V_1,V_2)
\chi(Q(U_1,U_2,V_1,V_2) ) \rho(U_1).%
\end{array}
\right.
\end{equation}
Define
\begin{equation*}
h(x)=x \chi(x),\;x\in\mathbb R.
\end{equation*}
Then system \eqref{3.3} can be rewritten as
\begin{equation}
\left\lbrace \label{3.4}
\begin{array}{lll}
U_1^{\prime } & = & \varepsilon \, h\left( U_2-F(U_1,V_1) \right) \rho(U_1),
\\
dU_2^{\prime } & = & -U_2+\varepsilon \,d \, h \left( P(U_1,U_2,V_1,V_2)
\right) \rho(U_1), \\
V_1^{\prime } & = & \varepsilon \, h \left(V_2-G(U_1,V_1) \right) \rho(U_1),
\\
V_2^{\prime } & = & -V_2+\varepsilon \, h \left( Q(U_1,U_2,V_1,V_2) \right)
\rho(U_1).%
\end{array}
\right.
\end{equation}
\begin{remark} In this truncation procedure the function $\rho(U_1)$ serves to avoid the singularity at $U_1=-1$ in $F$ and $G$. The function $h(.)$ is used to obtain a bounded Lipschitz perturbation of the system with $\varepsilon=0$.  
\end{remark}
By setting $X(t)=(U_1(t),V_1(t))$ and $Y(t)=(U_2(t),V_2(t))$, system \eqref{3.4} takes the following form
\begin{equation}  \label{3.5}
\left\lbrace
\begin{array}{l}
X^{\prime }(t)=\varepsilon \widetilde{F}(X(t),Y(t)), \\
Y^{\prime }(t)=-DY(t)+\varepsilon \widetilde{G}(X(t),Y(t)),%
\end{array}
\right.
\end{equation}
where $\widetilde{F}, \widetilde{G} \in C^\infty \left(\mathbb{R}^2 \times
\mathbb{R}^2 , \mathbb{R}^2 \right)$ are bounded and Lipschitz continuous functions
and where we have set $D=\mathrm{diag}(d^{-1},1)$. Therefore the central space is given by
\begin{equation*}
X_c=\left\lbrace (X,Y) \in \mathbb{R}^2 \times \mathbb{R}^2:Y=0
\right\rbrace,
\end{equation*}
while the stable space reads as
\begin{equation*}
X_s=\left\lbrace (X,Y) \in \mathbb{R}^2 \times \mathbb{R}^2 :X=0
\right\rbrace.
\end{equation*}

\begin{remark}
\label{REM3.1} Due to the choice of the constant $L>0$ the truncated system %
\eqref{3.4} coincides with the original system \eqref{3.2} whenever $%
(U_1,U_2,V_1,V_2) \in \mathbb{E}$. Moreover the equilibria of system %
\eqref{3.2} belong to $\mathbb{E}$ (since $U_1=u_1$ and $V_1=v_1$ and the
equilibria of \eqref{1.7} satisfy $u_2=v_2=0$ and $(u_1,v_1)$ must be an
equilibrium of \eqref{2.1}). Conversely the equilibria of \eqref{3.4} in $%
\mathbb{E}$ must satisfy
\begin{equation*}
\widetilde{U}_2=F(\widetilde{U}_1,\widetilde{V}_1)=0 \text{ and }\widetilde{V}%
_2=G(\widetilde{U}_1,\widetilde{V}_1)=0.
\end{equation*}
Now by using Proposition \ref{PROP2.4} we have $(\widetilde{U}_1,%
\widetilde{V}_1) \in \mathbb{T}$.
\end{remark}

For $\eta > 0$ and $p \in \mathbb N\setminus\{0\}$ we define the weighted spaces
\begin{equation*}
BC^\eta(\mathbb{R};\mathbb{R}^p)=\left\lbrace u \in C\left(\mathbb{R},%
\mathbb{R}^p \right):\sup_{ t \in \mathbb{R}} e^{-\eta \vert t \vert} \Vert
u(t) \Vert < \infty \right\rbrace.
\end{equation*}
Moreover for $\varepsilon >0$ small enough we can apply the smooth center
manifold theorem proved by Vanderbauwhede \cite[Theorem 3.1]{Vanderbauwhede}
and Vanderbauwhede and Iooss \cite[Theorem 1]{Vanderbauwhede-Iooss}. This yields the following reduction result.

\begin{theorem} \label{TH3.2}
Let $\eta \in \left(0,\min(1,1/d)\right)$ be given and fixed. Then there exists $\widetilde\varepsilon_0 >0$
such that for each $\varepsilon \in [0,\widetilde\varepsilon_0]$ we can find a map $%
\Phi_\varepsilon=\left(\Phi^1_\varepsilon,\Phi^2_\varepsilon\right) \in C^k(\mathbb{R}^2 ,\mathbb{R}^2 )$, for each integer $%
k>0 $, satisfying the following properties
\begin{equation*}
\Phi_\varepsilon\left( 0_\mathbb{R}\right) =0_{\mathbb{R}^2} \text{ and } D
\Phi_\varepsilon\left( 0_\mathbb{R}\right) =0_{\mathcal{L} (\mathbb{R}^2)},
\end{equation*}
and $\Phi_\varepsilon$ is bounded as well as its derivatives up to the order
$k$ and
\begin{equation*}
\lim_{\varepsilon \to 0 }\Vert \Phi_\varepsilon \Vert_{\infty}=0 \text{ and }
\lim_{\varepsilon \to 0 }\Vert \Phi_\varepsilon \Vert_{\mathrm{Lip}}=0.
\end{equation*}
Moreover we have the following properties:

\begin{itemize}
\item[{\rm (i) }] The \textbf{global center manifold } $%
M_\varepsilon=\left\lbrace (X,Y): Y= \Phi_\varepsilon(X)\right\rbrace$ is
invariant by the semiflow generated by \eqref{3.4} (forward and backward in
time). Namely if $t \to X(t)$ is a solution of the \textbf{reduced
system} on some interval $I \subset \mathbb{R}$
\begin{equation}
\label{3.6}
X^{\prime }(t)=\varepsilon \widetilde{F}(X(t),\Phi_\varepsilon(X(t))),
\forall t \in I,
\end{equation}
then $t \to (X(t),\Phi_\varepsilon(X(t)))$ is a solution of \eqref{3.4} on $%
I $.

\item[{\rm(ii) }] If $t \to (X(t),Y(t))$ is a solution of \eqref{3.4} on
$\mathbb{R}$ which belongs to $BC^\eta(\mathbb{R};\mathbb{R}^4)$, then
\begin{equation*}
(X(t),Y(t)) \in M_\varepsilon, \forall t \in \mathbb{R} \Leftrightarrow
Y(t)=\Phi_\varepsilon(X(t)), \forall t \in \mathbb{R} .
\end{equation*}
\end{itemize}
\end{theorem}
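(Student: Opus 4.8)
The plan is to recognize \eqref{3.5} as a system in the standard form required by the center manifold theorems of Vanderbauwhede \cite{Vanderbauwhede} and Vanderbauwhede--Iooss \cite{Vanderbauwhede-Iooss}, with the small parameter $\varepsilon$ playing the role of a parameter on which the nonlinearity depends smoothly (and which vanishes when $\varepsilon=0$). Writing $Z=(X,Y)\in\mathbb R^2\times\mathbb R^2$, system \eqref{3.5} reads $Z'=\mathcal L Z+\varepsilon\,\mathcal R(Z)$ with $\mathcal L=\mathrm{diag}(0_{\mathbb R^2},-D)$ and $\mathcal R=(\widetilde F,\widetilde G)$ bounded and globally Lipschitz. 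First I would check the spectral hypothesis: $\mathcal L$ has spectrum $\{0\}\cup\{-d^{-1},-1\}$, so the center subspace is exactly $X_c=\{Y=0\}$ and the hyperbolic (here stable) subspace is $X_s=\{X=0\}$; there is no unstable part. Since $\eta\in(0,\min(1,1/d))$ lies strictly between $0$ and the spectral gap $\min(1,1/d)=\min(\mathrm{Re}(-\sigma(\mathcal L)\setminus\{0\}))$, the exponential dichotomy estimates needed to set up the fixed-point argument in $BC^\eta$ hold.

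Next I would verify the nonlinearity hypotheses. The truncation construction (the cutoffs $\rho$ and $\chi$, hence $h(x)=x\chi(x)$) was arranged precisely so that $\widetilde F,\widetilde G\in C^\infty$, are globally bounded, and have globally small Lipschitz constant once multiplied by $\varepsilon$: indeed $\mathrm{Lip}(\varepsilon\mathcal R)=\varepsilon\,\mathrm{Lip}(\mathcal R)\to 0$ as $\varepsilon\to0$. This is exactly the smallness condition under which the cited theorems produce a global Lipschitz (indeed $C^k$ for every $k$, since $\mathcal R\in C^\infty$) center manifold $M_\varepsilon=\{(X,Y):Y=\Phi_\varepsilon(X)\}$, together with the bound $\|\Phi_\varepsilon\|_\infty+\|\Phi_\varepsilon\|_{\mathrm{Lip}}\le C\,\varepsilon\,\mathrm{Lip}(\mathcal R)$, which gives $\lim_{\varepsilon\to0}\|\Phi_\varepsilon\|_\infty=\lim_{\varepsilon\to0}\|\Phi_\varepsilon\|_{\mathrm{Lip}}=0$. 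For $\varepsilon=0$ the map is $\Phi_0\equiv 0$ because $X_c$ itself is then invariant. The normalization $\Phi_\varepsilon(0)=0$ and $D\Phi_\varepsilon(0)=0$ follows from the facts that the origin is an equilibrium of \eqref{3.4} for every $\varepsilon$ (so it lies on $M_\varepsilon$) and that $\mathcal R(0)=0$, $D\mathcal R(0)$ maps $X_c$ into... one checks $D\Phi_\varepsilon(0)$ solves the linear invariance relation whose only bounded solution with the dichotomy is $0$; alternatively it is immediate from the $C^1$-closeness $\|\Phi_\varepsilon\|_{\mathrm{Lip}}\to 0$ together with $\Phi_\varepsilon(0)=0$ only after noting the standard fact that the reduction map is tangent to $X_c$ at a center equilibrium.

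Finally, properties (i) and (ii) are the standard conclusions of the global center manifold theorem in this setting: (i) invariance of $M_\varepsilon$ under the (complete) flow of \eqref{3.4}, so that solutions of the reduced equation \eqref{3.6} lift to solutions of the full system; and (ii) the characterization that among solutions lying in $BC^\eta(\mathbb R;\mathbb R^4)$ — i.e. with at most exponential growth of rate $\eta$ — the ones on $M_\varepsilon$ are exactly those satisfying $Y(t)=\Phi_\varepsilon(X(t))$ for all $t$. Both are transcribed directly from \cite[Theorem 3.1]{Vanderbauwhede} and \cite[Theorem 1]{Vanderbauwhede-Iooss}. The only genuine work, and the main (minor) obstacle, is bookkeeping: matching the weighted-space formulation of \eqref{3.5} to the precise hypotheses of the quoted theorems, in particular checking that the global smallness of $\varepsilon\,\mathrm{Lip}(\mathcal R)$ relative to the spectral gap (through the chosen $\eta$) is what underlies both the existence of $M_\varepsilon$ for all $\varepsilon\in[0,\widetilde\varepsilon_0]$ and the vanishing limits of $\|\Phi_\varepsilon\|_\infty$ and $\|\Phi_\varepsilon\|_{\mathrm{Lip}}$; once this is in place the theorem is essentially a citation.
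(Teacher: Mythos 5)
Your proposal is essentially what the paper does: the paper gives no proof of Theorem~\ref{TH3.2} at all, it simply cites \cite[Theorem 3.1]{Vanderbauwhede} and \cite[Theorem 1]{Vanderbauwhede-Iooss} and records the conclusions. You correctly recast \eqref{3.5} as $Z'=\mathcal L Z+\varepsilon\mathcal R(Z)$, identify the spectral trichotomy $\sigma(\mathcal L)=\{0\}\cup\{-d^{-1},-1\}$ with gap $\min(1,1/d)>\eta$, verify that the cutoffs make $\widetilde F,\widetilde G\in C^\infty$ bounded with bounded derivatives and that $\mathrm{Lip}(\varepsilon\mathcal R)=\varepsilon\,\mathrm{Lip}(\mathcal R)\to 0$, and you read off (i), (ii) and the vanishing of $\|\Phi_\varepsilon\|_\infty$ and $\|\Phi_\varepsilon\|_{\mathrm{Lip}}$ from the cited statements. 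That is exactly the intended argument.

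One step in your write-up does not actually close, namely the claim $D\Phi_\varepsilon(0)=0$. Neither of your suggested justifications works. The invariance relation for $L:=D\Phi_\varepsilon(0)$ reads
\begin{equation*}
-DL+\varepsilon D_X\widetilde G(0)+\varepsilon D_Y\widetilde G(0)L
= L\bigl(\varepsilon D_X\widetilde F(0)+\varepsilon D_Y\widetilde F(0)L\bigr),
\end{equation*}
and for $\varepsilon>0$ the inhomogeneous term $\varepsilon D_X\widetilde G(0)$ does not vanish (for instance $\partial_{U_1}P(0,0,0,0)=-(\alpha\gamma)^2\neq 0$), so the branch of solutions through $L=0$ at $\varepsilon=0$ satisfies $L(\varepsilon)=O(\varepsilon)\neq 0$ for small $\varepsilon>0$. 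Likewise, the origin is not a ``center equilibrium'' for \eqref{3.4} when $\varepsilon>0$: the center subspace of the Jacobian of \eqref{3.4} at the origin is only an $O(\varepsilon)$ perturbation of $X_c=\{Y=0\}$, so the graph $\{Y=\Phi_\varepsilon(X)\}$, which is tangent to the centre subspace of $\mathcal L$ in the sense of the cited theorems, cannot be expected to be exactly tangent to $\{Y=0\}$ at a point where the nonlinearity fails $D(\varepsilon\mathcal R)(0)=0$. What actually follows, and what the paper uses downstream, is $\Phi_\varepsilon(0)=0$ (the origin is an equilibrium, hence on $M_\varepsilon$) together with $\|\Phi_\varepsilon\|_{\mathrm{Lip}}\to 0$, hence $D\Phi_\varepsilon(0)\to 0$ as $\varepsilon\to 0$. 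You should either drop the exact tangency claim or observe that it is a harmless imprecision in the statement, rather than try to derive it; the remainder of your verification is sound.
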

Now let us prove the following invariance result.
\begin{proposition}
There exists $\varepsilon_0 \in (0,\widetilde\varepsilon_0]$ such that triangle $%
\mathbb{T}$ is negatively invariant by the flow generated by the reduced
system \eqref{3.6}. That is to say that
\begin{equation*}
X^{\prime }(t)=\varepsilon \widetilde{F}(X(t),\Phi_\varepsilon(X(t))),
\forall t \in \mathbb{R} \text{ and } X(0)=X_0 \in \mathbb{T}
\end{equation*}
implies
\begin{equation*}
X(t) \in \mathbb{T}, \forall t \leq 0.
\end{equation*}
Furthermore the subsets
\begin{equation*}
\partial_u \mathbb{T}=\left\lbrace (U,V) \in \mathbb{T}:V=0 \right\rbrace
\end{equation*}
and
\begin{equation*}
\partial_v \mathbb{T}=\left\lbrace (U,V) \in \mathbb{T}:U=0 \right\rbrace
\end{equation*}
are negatively invariant by the flow generated by the reduced system %
\eqref{3.6}.
\end{proposition}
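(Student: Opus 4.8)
The plan is to exploit the fact that, by Theorem~\ref{TH3.2}, the reduced vector field on $\mathbb T$ is an $O(\varepsilon)$ perturbation of the kinetic system \eqref{1.3bis}, together with the transversality of the original flow on $\partial\mathbb T$ that was already observed in the proof of Proposition~\ref{PROP2.4}. More precisely, write the reduced system as $X'(t)=\varepsilon\,\widetilde F(X(t),\Phi_\varepsilon(X(t)))$ and recall that $\widetilde F$ coincides with $(F,G)$ on $\mathbb T$ once we are inside $\mathbb E$ (Remark~\ref{REM3.1}), while $\|\Phi_\varepsilon\|_\infty\to 0$ and $\|\Phi_\varepsilon\|_{\mathrm{Lip}}\to 0$ as $\varepsilon\to 0$. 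Hence on $\mathbb T$ one has
\begin{equation*}
\widetilde F(X,\Phi_\varepsilon(X))=(F(X),G(X))+R_\varepsilon(X),\qquad \|R_\varepsilon\|_{\infty,\mathbb T}\to 0\ \text{as }\varepsilon\to 0,
\end{equation*}
since $F,G$ and their derivatives are bounded on the compact set $\mathbb T$. The strategy is then a standard subtangentiality (Nagumo-type) argument applied to the \emph{time-reversed} reduced flow, i.e. to $X'=-\varepsilon\,\widetilde F(X,\Phi_\varepsilon(X))$, which must leave $\mathbb T$ forward in time invariant.

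The key steps, in order, are as follows. First, I would record that in the proof of Proposition~\ref{PROP2.4} it was shown that $\beta F(U,V)+G(U,V)<0$ strictly on the hypotenuse $\{\beta U+V=R,\ U\ge 0,\ V\ge 0\}$; by compactness this strict inequality holds with a uniform gap $-m_0<0$, so for $\varepsilon$ small enough the perturbed field still satisfies $\beta\widetilde F_1+\widetilde F_2<0$ there, which means the outward normal $(\beta,1)$ to the hypotenuse is strictly decreasing along the forward reduced flow — equivalently the inward normal points inward for the backward flow. Second, on the two legs $\partial_u\mathbb T=\{V=0\}$ and $\partial_v\mathbb T=\{U=0\}$ I would show these are in fact \emph{invariant}, not merely subtangential: on $\{V=0\}$ the $V$-component of $(F,G)$ is $G(U,0)=0$, and one checks from the change of variables \eqref{3.1} and the definition of $\widetilde G$ (with $\rho(U_1)$ keeping us away from the singularity) that $\Phi_\varepsilon$ preserves the set $\{V_1=0\}$, hence $\widetilde F_2$ vanishes identically when $V=0$; the argument for $\{U=0\}$ is identical using $F(0,V)=0$. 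This forces $\partial_u\mathbb T$ and $\partial_v\mathbb T$ to be negatively (indeed bi-)invariant and, in particular, the backward flow cannot exit $\mathbb T$ through a leg. Third, combining the strict inward-pointing condition on the hypotenuse with invariance of the two legs, and using that $\mathbb T$ is compact and convex so its boundary is exactly hypotenuse $\cup$ legs, I would invoke Nagumo's theorem (or a direct contradiction argument: if $X(0)\in\mathbb T$ but $X(t_0)\notin\mathbb T$ for some $t_0<0$, let $t_1\in(t_0,0]$ be the last exit time, examine which boundary piece $X(t_1)$ lies on, and derive a contradiction with one of the two previous steps) to conclude $X(t)\in\mathbb T$ for all $t\le 0$.

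The main obstacle is the second step: verifying that the center manifold map $\Phi_\varepsilon$ actually respects the invariant coordinate subspaces $\{V_1=0\}$ and $\{U_1=0\}$, so that the reduced field is genuinely tangent (in fact zero in the relevant component) along the legs rather than merely approximately so. This does not follow from the soft properties listed in Theorem~\ref{TH3.2}; it requires going back to the construction of $M_\varepsilon$ and observing that $\{V_1=V_2=0\}$ (resp. $\{U_1=U_2=0\}$) is invariant for the full truncated system \eqref{3.4} — which holds because $G$, $P$, $Q$ all carry a factor of $V_1$ when $V_1=0$ (resp. $F,P,Q$ when $U_1=0$), and because the truncation functions $h$ and $\chi$ preserve zeros — and that the center manifold of an equivariant/invariant-subspace-preserving system inherits that structure by uniqueness of the local center manifold. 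Once this is in hand, the remaining transversality estimate on the hypotenuse is a routine perturbation of the inequality already proved in Proposition~\ref{PROP2.4}, and the Nagumo argument is standard.
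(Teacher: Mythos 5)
Your proposal is correct and follows essentially the same route as the paper: (a) show the legs $\partial_u\mathbb T$ and $\partial_v\mathbb T$ are genuinely invariant by showing $\Phi_\varepsilon$ preserves the coordinate slices, which is the crux; (b) show the outward field on the hypotenuse $\{\beta U+V=R\}$ persists under the $O(\varepsilon)$ perturbation via the strict inequality $\beta F+G<0$ from Proposition~\ref{PROP2.4}; (c) combine to get negative invariance. You correctly flagged (a) as the nontrivial step.

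One small correction on how (a) is actually closed: it \emph{does} follow from the properties stated in Theorem~\ref{TH3.2}, without revisiting the construction of $M_\varepsilon$. The paper's argument is: with $U_1=U_2=0$ one has $F(0,V_1)=0$ and $P(0,0,V_1,V_2)=0$ (since $\partial_V F(0,V)=0$), so $\{U_1=U_2=0\}$ is invariant for the truncated system \eqref{3.4} and on it the dynamics reduce to the two-dimensional system \eqref{3.8}; applying the same center manifold theorem to \eqref{3.8} produces a map $\Psi_\varepsilon$ whose graph solutions $(V_1^\star,V_2^\star)$ lie in $BC^\eta(\R;\R^2)$, hence $(0,0,V_1^\star,V_2^\star)\in BC^\eta(\R;\R^4)$ solves \eqref{3.4}, and property (ii) of Theorem~\ref{TH3.2} forces $(0,V_2^\star(t))=\Phi_\varepsilon(0,V_1^\star(t))$, i.e. $\Phi^1_\varepsilon(0,V_1)\equiv 0$. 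So the mechanism is the $BC^\eta$ characterization of the global center manifold, not uniqueness of a local center manifold (which, as you know, fails in general). Otherwise your plan matches the paper's.
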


\begin{proof}

In the first step, we investigate the invariance for the boundary regions $\partial_u\mathbb T$ and $\partial_v\mathbb T$.
To that aim we claim that
\begin{equation}  \label{3.7}
\left(
\begin{array}{c}
U_2 \\
V_2%
\end{array}
\right)= \Phi_\varepsilon\left(
\begin{array}{c}
0 \\
V_1%
\end{array}
\right) \Rightarrow U_2=0.
\end{equation}
Indeed, assume that $U_1=U_2=0$ in system, then
\begin{equation*}
U_1=U_2=0 \Rightarrow U_2-F(U_1,V_1)=0 \text{ and } P(U_1,U_2,V_1,V_2)=0.
\end{equation*}
Therefore the two last components of the truncated system \eqref{3.3} become
\begin{equation}  \label{3.8}
\left\lbrace
\begin{array}{lll}
V_1^{\prime } & = & \varepsilon \, h \left(V_2-G(0,V_1) \right), \\
V_2^{\prime } & = & -V_2+\varepsilon \, h \left( Q(0,0,V_1,V_2) \right).%
\end{array}
\right.
\end{equation}
Now by applying the center manifold theorem to \eqref{3.8} (which applies for the
value of $\varepsilon \in (0, \widetilde\varepsilon_0 )$ since the estimations for
systems \eqref{3.4} and \eqref{3.8} remain unchanged in the proof of the center
manifold theorem), we deduce that we can find a map $\Psi_\varepsilon \in
C^k(\mathbb{R},\mathbb{R})$ such that the center manifold of the two
dimensional system \eqref{3.8}
\begin{equation*}
V_2= \Psi_\varepsilon(V_1)
\end{equation*}
and the solution $t \to (V^\star_1(t),V^\star_2(t))$ of \eqref{3.8} starting
from an initial value $(V_1,\Psi_\varepsilon(V_1))$ satisfies
\begin{equation*}
(V^\star_1,V^\star_2) \in BC^\eta(\mathbb{R};\mathbb{R}^2).
\end{equation*}
We conclude that
\begin{equation*}
(U_1,U_2,V_1,V_2)=(0,0,V^\star_1,V^\star_2) \in BC^\eta(\mathbb{R};\mathbb{R}%
^4)
\end{equation*}
is a solution of the truncated system \eqref{3.4}. This completes the proof of
the claim.

By using similar argument one deduces that
\begin{equation}  \label{3.9}
\left(
\begin{array}{c}
U_2 \\
V_2%
\end{array}
\right)= \Phi_\varepsilon\left(
\begin{array}{c}
U_1 \\
0%
\end{array}
\right) \text{ and } U_1 \geq 0 \Rightarrow V_2=0.
\end{equation}
We now turn to the invariance of the triangle $\mathbb T$.
By using the fact that
\begin{equation}  \label{3.10}
\left\lbrace
\begin{array}{lll}
U_1^{\prime } & = & \varepsilon \, h\left(
\Phi^1_\varepsilon(U_1,V_1)-F(U_1,V_1) \right) \rho(U_1), \\
V_1^{\prime } & = & \varepsilon \, h
\left(\Phi^2_\varepsilon(U_1,V_1)-G(U_1,V_1) \right) \rho(U_1).%
\end{array}
\right.
\end{equation}
Whenever $\beta U_1+V_1=R$ and $U_1 \geq 0$ and $V_1 \geq 0$ in system %
\eqref{3.9}, then $\rho(U_1)=1$ and for $\varepsilon >0$ small enough ($%
h $ coincides with identity)
\begin{equation*}
\beta U_1^{\prime }+V_1^{\prime }= \varepsilon \, \left(
\Phi^1_\varepsilon(U_1,V_1)+\Phi^2_\varepsilon(U_1,V_1)-F(U_1,V_1)
-G(U_1,V_1)\right)>0.
\end{equation*}
Therefore by combining this fact together with \eqref{3.6} and \eqref{3.8},
we deduce that the triangle $\mathbb{T}$ is negatively invariant by the
reduced system.
\end{proof}

\subsection{Global attractors}

We investigate preliminary properties of the perturbed two-dimensional
(reduced) system \eqref{3.6}. Recall that Assumption \ref{ASS2.1} is
satisfied along this paper and $\mathbb{T}$, $\partial_u\mathbb{T}$, $\partial_v\mathbb{T}$ are
negatively invariant with respect to this system for all $\varepsilon\in
(0,\varepsilon_0]$. Before going further, by setting $t=-\varepsilon s$ and $%
\left(\tilde U,\tilde V\right)(s)=\left(U_1,U_2\right)(t)$ the above system \eqref{3.6} becomes, dropping the tilde for notational simplicity
\begin{equation}  \label{3.11}
\left\lbrace
\begin{array}{lll}
U^{\prime } & = & \left[ -\Phi^1_\varepsilon(U,V)+F(U,V) \right]%
:=F_\varepsilon(U,V), \\
V^{\prime } & = & \left[ -\Phi^2_\varepsilon(U,V)+G(U,V) \right]%
:=G_\varepsilon(U,V).%
\end{array}
\right.
\end{equation}
Notice that $\mathbb{T}$, $\partial_u\mathbb{T}$ and $\partial_v\mathbb{T}$
become positively invariant with respect to the above system. Then, for
each such $\varepsilon\in [0,\varepsilon_0]$, we denote by $%
\left\{T_\varepsilon(t)\right\}_{t\geq 0}$ the
strongly continuous semiflow on the triangle $\mathbb{T}$ generated by \eqref{3.11}. One may also observe
it continuously depends on $\varepsilon$, namely the map $(\varepsilon, t,
X)\to T_\varepsilon(t)X$ is continuous from $[0,\varepsilon_0]\times
[0,\infty)\times\mathbb{T}$ into $\mathbb{T}$. Our first result reads as
follows:

\begin{lemma}
\label{LE3.4} Let $\varepsilon\in [0,\varepsilon_0]$ be given. Then the
semiflow $\{T_\varepsilon(t)\}_{t\geq 0}$ possesses a compact and connected
global attractor $\mathcal{A}_\varepsilon\subset \mathbb{T}$ attracting $%
\mathbb{T}$ in the sense that
\begin{equation*}
\mathrm{dist}\,\left(T_\varepsilon(t)X,\mathcal{A}_\varepsilon\right) \to 0%
\text{ as $t\to\infty$ uniformly for $X\in\mathbb{T}$},
\end{equation*}
wherein $\mathrm{dist}\,(X,\mathcal{A}_\varepsilon)=\displaystyle \inf_{Y \in \mathcal{A}_\varepsilon} \Vert X-Y \Vert $ denotes the Euclidean
distance from $X\in\mathbb{R}^2$ to $\mathcal{A}_\varepsilon$.
\end{lemma}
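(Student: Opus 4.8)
The plan is to recognize \eqref{3.11} as a continuous semiflow on the \emph{compact} metric space $\mathbb{T}$ and then invoke the standard existence theory for global attractors (see Hale \cite{Hale88, Hale00}). All the structural ingredients are already in hand: $\mathbb{T}$ is a compact convex --- hence connected --- subset of $\mathbb{R}^2$; by the previous proposition it is positively invariant under \eqref{3.11} for every $\varepsilon\in[0,\varepsilon_0]$; and since \eqref{3.11} is a smooth, bounded and globally Lipschitz planar vector field, $T_\varepsilon(t)$ is well defined for all $t\ge 0$ and continuous in $(t,X)$ (and jointly in $\varepsilon$).

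For existence and compactness I would argue as follows. Since $\mathbb{T}$ is compact and positively invariant, the family $K(t):=T_\varepsilon(t)\mathbb{T}$, $t\ge 0$, consists of nonempty compact subsets of $\mathbb{T}$ and is nested decreasing: for $0\le t_1\le t_2$ one has $K(t_2)=T_\varepsilon(t_1)\bigl(T_\varepsilon(t_2-t_1)\mathbb{T}\bigr)\subset T_\varepsilon(t_1)\mathbb{T}=K(t_1)$, because $T_\varepsilon(t_2-t_1)\mathbb{T}\subset\mathbb{T}$. Hence $\mathcal{A}_\varepsilon:=\bigcap_{t\ge 0}T_\varepsilon(t)\mathbb{T}$ is a nonempty compact set, and the semigroup property together with compactness yields its full invariance $T_\varepsilon(t)\mathcal{A}_\varepsilon=\mathcal{A}_\varepsilon$ by the usual diagonal argument. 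For the (uniform) attraction, suppose it failed: then there would exist $\delta>0$, times $t_n\to\infty$ and points $X_n\in\mathbb{T}$ with $\mathrm{dist}(T_\varepsilon(t_n)X_n,\mathcal{A}_\varepsilon)\ge\delta$; by compactness of $\mathbb{T}$ one may pass to a subsequence with $T_\varepsilon(t_n)X_n\to z$, and for each fixed $s\ge0$ and $n$ large enough $t_n\ge s$ forces $T_\varepsilon(t_n)X_n\in K(s)$, so $z\in K(s)$ for all $s$, i.e.\ $z\in\mathcal{A}_\varepsilon$, a contradiction. This is precisely the stated convergence, and it identifies $\mathcal{A}_\varepsilon$ as the global attractor of the semiflow on $\mathbb{T}$.

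Connectedness then follows from a soft topological fact: since $\mathbb{T}$ is connected and $T_\varepsilon(t)$ is continuous, each $K(t)=T_\varepsilon(t)\mathbb{T}$ is connected, and a decreasing intersection of compact connected subsets of a metric space is connected; hence $\mathcal{A}_\varepsilon$ is connected. (Equivalently, one may quote that a global attractor attracting a connected absorbing set is connected, cf.\ \cite{Hale88}.) I do not anticipate any genuine obstacle here: the whole point of the truncation-plus-rescaling carried out in the previous subsection was to place us in the comfortable setting of a continuous semiflow on a compact, positively invariant planar region, where the existence, compactness and connectedness of the global attractor are textbook facts and no asymptotic-smoothness condition needs to be checked in finite dimension. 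The only points deserving a line of care are the nested/decreasing character of the family $\{T_\varepsilon(t)\mathbb{T}\}_{t\ge0}$, which is immediate from positive invariance, and the observation that attraction of the \emph{compact} absorbing set $\mathbb{T}$ is automatically uniform in $X\in\mathbb{T}$.
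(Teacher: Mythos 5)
Your argument is correct and rests on the same structural facts the paper uses --- namely that $\mathbb{T}$ is compact, convex (hence connected), and positively invariant for \eqref{3.11} on $[0,\varepsilon_0]$, so that the semiflow $T_\varepsilon$ is automatically bounded dissipative and completely continuous. The only difference is one of packaging: the paper dispatches existence and compactness of $\mathcal{A}_\varepsilon$ by citing Theorem~3.4.8 of Hale \cite{Hale00} (a global attractor for a completely continuous, bounded dissipative semiflow) and invokes Gobbino--Sardella \cite{Gobbino97} for connectedness, while you re-derive those results from scratch in this finite-dimensional setting, constructing $\mathcal{A}_\varepsilon=\bigcap_{t\geq 0}T_\varepsilon(t)\mathbb{T}$ as a nested decreasing intersection of nonempty compact connected sets (the nesting is exactly where positive invariance enters) and checking full invariance and uniform attraction directly by compactness and a sequential-extraction argument. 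Both routes are valid; your self-contained version makes transparent why, on a compact positively invariant planar region, no asymptotic-smoothness hypotheses need to be verified, whereas the paper's version is shorter and defers to the general theory. One small point worth a line of care if you keep your version: the claim $T_\varepsilon(t)\mathcal{A}_\varepsilon=\mathcal{A}_\varepsilon$ needs both inclusions, and the ``diagonal argument'' you gesture at is precisely the extraction, for each fixed $t$, of a convergent subsequence from preimages $y_s\in K(s)$ with $T_\varepsilon(t)y_s=z$ and the observation that the limit lies in every $K(s)$; spelling this out (together with $\bigcap_{r\geq t}K(r)=\bigcap_{r\geq 0}K(r)$ for monotone families) would make the invariance step fully rigorous.
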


\begin{proof}
Fix $\varepsilon\in (0,\varepsilon_0]$. Since, for each $t\geq 0$, $%
T_\varepsilon(t):\mathbb{T}\to \mathbb{T}$ is completely continuous and
bounded dissipative ($\mathbb{T}$ is compact), Theorem 3.4.8 in \cite{Hale00}
ensures the existence of a global attractor for the semiflow $T_\varepsilon$%
. In addition, since $\mathbb{T}$ is connected, the result of Gobbino and
Sardella \cite{Gobbino97} applies and ensures that $\mathcal{A}_\varepsilon$
is connected.
\end{proof}

\begin{lemma}
\label{LE3.5} The family $\left(\mathcal{A}_\varepsilon\right)_{\varepsilon%
\in [0,\varepsilon_0]}$ is upper semi-continuous, in the sense that for each
$\widehat\varepsilon\in [0,\varepsilon_0]$ one has
\begin{equation*}
\lim_{\varepsilon\to\widehat\varepsilon}\delta\left(\mathcal{A}_\varepsilon,%
\mathcal{A}_{\widehat\varepsilon}\right)=0,
\end{equation*}
wherein $\delta\left(\mathcal{A}_\varepsilon,\mathcal{A}_{\widehat%
\varepsilon}\right)$ is given by
\begin{equation*}
\delta\left(\mathcal{A}_\varepsilon,\mathcal{A}_{\widehat\varepsilon}%
\right)=\sup_{y\in \mathcal{A}_\varepsilon}\mathrm{dist}\,\left(y,\mathcal{A}%
_{\widehat\varepsilon}\right).
\end{equation*}
\end{lemma}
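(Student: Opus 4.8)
The plan is to argue by contradiction, using only three ingredients that are already in hand: the compactness of the common phase space $\mathbb{T}$ (Proposition~\ref{PROP2.4}); the joint continuity of the map $(\varepsilon,t,X)\mapsto T_\varepsilon(t)X$ from $[0,\varepsilon_0]\times[0,\infty)\times\mathbb{T}$ into $\mathbb{T}$ recorded just before the statement; and the uniform attraction property from Lemma~\ref{LE3.4}, namely that $\sup_{X\in\mathbb{T}}\mathrm{dist}\,(T_{\widehat\varepsilon}(t)X,\mathcal{A}_{\widehat\varepsilon})\to 0$ as $t\to\infty$. Note that the fact that \emph{all} the attractors $\mathcal{A}_\varepsilon$ lie in the single compact set $\mathbb{T}$ plays the role of a uniform-in-$\varepsilon$ bound, which is what makes the perturbation argument clean.

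Suppose the conclusion fails at some $\widehat\varepsilon\in[0,\varepsilon_0]$. Then there exist $\delta_0>0$, a sequence $\varepsilon_n\to\widehat\varepsilon$ and points $y_n\in\mathcal{A}_{\varepsilon_n}$ with $\mathrm{dist}\,(y_n,\mathcal{A}_{\widehat\varepsilon})\geq\delta_0$ for all $n$. Apply Lemma~\ref{LE3.4} for the parameter $\widehat\varepsilon$ and fix once and for all a time $T>0$ such that
\begin{equation*}
\sup_{X\in\mathbb{T}}\mathrm{dist}\,(T_{\widehat\varepsilon}(T)X,\mathcal{A}_{\widehat\varepsilon})<\delta_0 .
\end{equation*}
Since $\mathcal{A}_{\varepsilon_n}$ is invariant under $T_{\varepsilon_n}$, we may write $y_n=T_{\varepsilon_n}(T)z_n$ with $z_n\in\mathcal{A}_{\varepsilon_n}\subset\mathbb{T}$. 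By compactness of $\mathbb{T}$, after extracting a subsequence, $z_n\to z\in\mathbb{T}$ and $y_n\to y\in\mathbb{T}$. Joint continuity of the semiflow then gives $y=\lim_n T_{\varepsilon_n}(T)z_n=T_{\widehat\varepsilon}(T)z$, whence $\mathrm{dist}\,(y,\mathcal{A}_{\widehat\varepsilon})\leq\sup_{X\in\mathbb{T}}\mathrm{dist}\,(T_{\widehat\varepsilon}(T)X,\mathcal{A}_{\widehat\varepsilon})<\delta_0$. On the other hand, passing to the limit in $\mathrm{dist}\,(y_n,\mathcal{A}_{\widehat\varepsilon})\geq\delta_0$ and using continuity of $X\mapsto\mathrm{dist}\,(X,\mathcal{A}_{\widehat\varepsilon})$ yields $\mathrm{dist}\,(y,\mathcal{A}_{\widehat\varepsilon})\geq\delta_0$ — a contradiction. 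Hence $\delta\left(\mathcal{A}_\varepsilon,\mathcal{A}_{\widehat\varepsilon}\right)\to 0$ as $\varepsilon\to\widehat\varepsilon$.

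No serious obstacle is expected; the argument is essentially the standard one for upper semi-continuity of global attractors under perturbation. The only point deserving a line of verification is that the continuous dependence of $T_\varepsilon(t)$ on $\varepsilon$ is uniform on compact $t$-intervals and over $\mathbb{T}$, which follows from the fact that the vector field $(F_\varepsilon,G_\varepsilon)$ in \eqref{3.11} is Lipschitz and depends continuously on $\varepsilon$ (using $\lim_{\varepsilon\to 0}\Vert\Phi_\varepsilon\Vert_{\mathrm{Lip}}=0$ and the smoothness in $\varepsilon$ from Theorem~\ref{TH3.2}), combined with a Gronwall estimate. Alternatively, one may simply invoke the abstract upper semi-continuity theorem for global attractors (see e.g. Hale \cite{Hale00}), whose hypotheses are precisely the three ingredients listed in the first paragraph.
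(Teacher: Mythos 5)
Your argument is correct and amounts to a self-contained proof of the abstract upper semi-continuity theorem that the paper simply invokes (Theorem 3.5.2 in Hale \cite{Hale00}): the three ingredients you isolate — joint continuity of $(\varepsilon,t,X)\mapsto T_\varepsilon(t)X$, compactness of $\mathbb{T}$, and uniform attraction to $\mathcal{A}_{\widehat\varepsilon}$ — are precisely the hypotheses of that theorem, and your contradiction argument is essentially its standard proof. You in fact note this alternative yourself in the last lines, so the two routes coincide.
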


\begin{proof}
Since the map $(\varepsilon,t,X)\mapsto T_\varepsilon(t)X$ is continuous
from $[0,\varepsilon_0]\times[0,\infty)\times \mathbb{T}$ into the compact
set $\mathbb{T}$, Theorem 3.5.2 in \cite{Hale00} ensures that the family $%
\{A_\varepsilon\}_{\varepsilon\in [0,\varepsilon_0]}$ is upper
semi-continuous.
\end{proof}

We continue this section by further studying some properties of the global
attractor $\mathcal{A}_\varepsilon$. To that aim, we define
\begin{equation*}
\partial\mathbb{T}^0=\partial_u\mathbb{T}\cup \partial_v\mathbb{T}\text{ and
}\mathbb{T}^0=\mathbb{T}\setminus\partial\mathbb{T}^{0}=\left\{(U,V)\in\mathbb{T}%
:\;U>0\text{ and }V>0\right\}.
\end{equation*}
Here let us recall that, for all $\varepsilon\in [0,\varepsilon_0]$ and $%
t\geq 0$, one has
\begin{equation}  \label{3.12}
T_\varepsilon(t)\mathbb{T}^0\subset \mathbb{T}^0\text{ and }%
T_\varepsilon(t)\partial \mathbb{T}^0\subset\partial\mathbb{T}^0.
\end{equation}

We prove the following uniform persistence result for $T_\varepsilon$.

\begin{lemma}
\label{LE3.6} There exists $\varepsilon_1\in \left(0,\varepsilon_0\right]$
and $\Theta>0$ such that for all $\varepsilon\in \left(0,\varepsilon_1\right]
$ and $X\in\mathbb{T}^0$ one has
\begin{equation*}
\liminf_{t\to \infty} \mathrm{dist}\,\left(T_\varepsilon(t)X,\partial\mathbb{%
T}^{0}\right)\geq \Theta.
\end{equation*}
\end{lemma}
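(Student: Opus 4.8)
\noindent\textit{Proof strategy.} I would prove Lemma~\ref{LE3.6} as the $\varepsilon$-robust counterpart of Proposition~\ref{PROP2.5}: apply the abstract uniform persistence theorem of Hale and Waltman \cite[Theorem 4.1]{Hale-Waltman} to the semiflow $T_\varepsilon$ on the compact positively invariant set $\mathbb{T}$ with the decomposition $(\partial\mathbb{T}^0,\mathbb{T}^0)$, and then check that the persistence threshold it produces can be taken independent of $\varepsilon\in(0,\varepsilon_1]$ for $\varepsilon_1$ small. On the compact set $\mathbb{T}$ the semiflow $T_\varepsilon$ is completely continuous and point dissipative by Lemma~\ref{LE3.4}, and $\mathbb{T}^0$, $\partial\mathbb{T}^0$ are positively invariant by \eqref{3.12}.

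\noindent\textit{Boundary dynamics.} On $\partial_v\mathbb{T}$ one has $U\equiv 0$ and, by \eqref{3.7}, $\Phi^1_\varepsilon(0,V)=0$, so the flow reduces to $V'=-V-\Phi^2_\varepsilon(0,V)$, which drives every point to $(0,0)$ once $\|\Phi_\varepsilon\|_{\mathrm{Lip}}<1$. On $\partial_u\mathbb{T}$ one has $V\equiv 0$ and, by \eqref{3.9}, $\Phi^2_\varepsilon(U,0)=0$ for $U\geq 0$, so the flow reads $U'=\alpha U(\gamma-U)-\Phi^1_\varepsilon(U,0)$, a small perturbation of the logistic equation on $[0,R/\beta]$ whose only equilibria are $U=0$ and $U=\gamma$ by Remark~\ref{REM3.1}. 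Hence, for $\varepsilon\leq\varepsilon_1$ small, the boundary attractor of $T_\varepsilon$ is $M_1\cup M_2\cup H_\varepsilon$ where $M_1=\{(0,0)\}$, $M_2=\{(\gamma,0)\}$ and $H_\varepsilon\subset\partial_u\mathbb{T}$ is the heteroclinic from $M_1$ to $M_2$. Since $D\Phi_\varepsilon(0)=0$, the linearization of \eqref{3.11} at $(0,0)$ coincides with that of \eqref{2.1}, so $(0,0)$ is a hyperbolic saddle for every $\varepsilon$; at $(\gamma,0)$ the Jacobian of \eqref{3.11} is the (saddle) Jacobian of \eqref{2.1} perturbed by $-D\Phi_\varepsilon(\gamma,0)$, of norm $\leq\|\Phi_\varepsilon\|_{\mathrm{Lip}}\to 0$, hence still a hyperbolic saddle for $\varepsilon\leq\varepsilon_1$. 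Thus $\{M_1,M_2\}$ is an isolated, acyclic covering of the boundary attractor, the only internal connection being $M_1\to M_2$, uniformly in $\varepsilon$.

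\noindent\textit{Uniform weak repulsivity.} The core step is to show $M_1$ and $M_2$ are weak repellers for $\mathbb{T}^0$, with uniform estimates. Using $F(U,V)=U\bigl[\alpha(\gamma-U)-\tfrac{V}{1+U}\bigr]$ together with $|\Phi^1_\varepsilon(U,V)|\leq\|\Phi_\varepsilon\|_{\mathrm{Lip}}\,U$ (from \eqref{3.7}), one has, whenever $U+V\leq\delta$, $F_\varepsilon(U,V)\geq U\bigl[\alpha(\gamma-\delta)-\delta-\|\Phi_\varepsilon\|_{\mathrm{Lip}}\bigr]\geq\kappa_1 U$ with $\kappa_1>0$ once $\delta$ and then $\varepsilon_1$ are small, $\kappa_1,\delta$ being independent of $\varepsilon\leq\varepsilon_1$; so any orbit from $\mathbb{T}^0$ staying in the $\delta$-ball of $M_1$ satisfies $U(t)\geq U(0)e^{\kappa_1 t}\to\infty$, impossible in $\mathbb{T}$, whence $W^s(M_1)\cap\mathbb{T}^0=\emptyset$. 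Similarly, using $G(U,V)=V\bigl(\tfrac{\beta U}{1+U}-1\bigr)$ with $\tfrac{\beta\gamma}{1+\gamma}-1=\tfrac{\gamma(\beta-1)-1}{1+\gamma}>0$ (Assumption~\ref{ASS2.1}) and $|\Phi^2_\varepsilon(U,V)|\leq\|\Phi_\varepsilon\|_{\mathrm{Lip}}\,V$ for $U\geq 0$ (from \eqref{3.9}), one gets $G_\varepsilon(U,V)\geq\kappa_2 V$ on the $\delta$-ball of $M_2$ for $\delta,\varepsilon_1$ small, $\kappa_2>0$ uniform; since $V(t)>0$ along orbits in $\mathbb{T}^0$, the same blow-up argument gives $W^s(M_2)\cap\mathbb{T}^0=\emptyset$.

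\noindent\textit{Conclusion and main obstacle.} For each fixed $\varepsilon\in(0,\varepsilon_1]$, \cite[Theorem 4.1]{Hale-Waltman} then yields uniform persistence of $T_\varepsilon$ relative to $(\partial\mathbb{T}^0,\mathbb{T}^0)$, i.e. a $\Theta_\varepsilon>0$ with $\liminf_{t\to\infty}\mathrm{dist}(T_\varepsilon(t)X,\partial\mathbb{T}^0)\geq\Theta_\varepsilon$ for all $X\in\mathbb{T}^0$. The genuinely delicate point, and the only real obstacle, is to replace $\Theta_\varepsilon$ by a single $\Theta>0$ valid for all $\varepsilon\in(0,\varepsilon_1]$. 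I would argue by contradiction: were there no such $\Theta$, one would get $\varepsilon_n\to 0$ and $X_n\in\mathbb{T}^0$ with $\mathrm{dist}(\omega_{\varepsilon_n}(X_n),\partial\mathbb{T}^0)\to 0$; picking $q_n\in\omega_{\varepsilon_n}(X_n)$ near $\partial\mathbb{T}^0$, which sits on a complete orbit contained in the compact set $\mathcal{A}_{\varepsilon_n}$, one extracts (by Arzelà--Ascoli, Lemma~\ref{LE3.5}, and the continuous dependence of $T_\varepsilon$ on $\varepsilon$) a complete orbit of \eqref{2.1} lying in $\mathbb{T}$ and anchored on $\partial\mathbb{T}^0$; the (chain-transitive) limit of the sets $\omega_{\varepsilon_n}(X_n)$ is then a compact invariant set of \eqref{2.1} meeting the invariant boundary $\partial\mathbb{T}^0$, which by the Butler--McGehee lemma and the acyclicity of the boundary attractor of \eqref{2.1} must equal $M_1$ or $M_2$, contradicting the \emph{uniform} repulsion estimates $F_\varepsilon\geq\kappa_1 U$ near $M_1$ and $G_\varepsilon\geq\kappa_2 V$ near $M_2$. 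Equivalently, one may simply observe that every quantitative ingredient entering the Hale--Waltman argument (the absorbing set $\mathbb{T}$, the thresholds $\delta,\kappa_1,\kappa_2$, and a common modulus of continuity of $(\varepsilon,t,X)\mapsto T_\varepsilon(t)X$ on $[0,\varepsilon_1]\times[0,\tau]\times\mathbb{T}$ for each $\tau>0$) is uniform in $\varepsilon\leq\varepsilon_1$. Taking $\varepsilon_1$ to be the least of the thresholds produced above finishes the proof.
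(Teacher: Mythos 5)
Your overall framework is correct and the boundary analysis (isolated covering $\{M_1,M_2\}$, acyclicity, and the quantitative repulsivity estimates $F_\varepsilon\geq\kappa_1 U$ near $(0,0)$ and $G_\varepsilon\geq\kappa_2 V$ near $(\gamma,0)$, with $\kappa_i$ uniform in small $\varepsilon$) matches what the paper needs and uses. You also correctly identify the crux: for each fixed $\varepsilon$ Hale--Waltman \cite[Theorem 4.1]{Hale-Waltman} only gives an $\varepsilon$-dependent threshold $\Theta_\varepsilon$, and the whole point of the lemma is a single $\Theta$ working for all small $\varepsilon$. However, neither of your two proposed remedies closes this gap rigorously. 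The remark that ``every quantitative ingredient is uniform in $\varepsilon$'' is not a proof: Hale--Waltman is not stated with an explicit lower bound for $\Theta$ in terms of those ingredients, so uniformity of the hypotheses does not by itself imply uniformity of the conclusion. Your compactness/contradiction sketch is closer, but as written it only treats $\varepsilon_n\to 0$ (you must also handle a limit $\hat\varepsilon\in(0,\varepsilon_1]$), and the final step --- deducing a contradiction once the limit orbit lies in $\partial\mathbb{T}^0$ --- requires a genuine chain-transitivity/Butler--McGehee argument, since merely exhibiting a boundary entire orbit of the limiting system is not in itself absurd. That argument can be made to work (it is essentially what \cite{Magal09} formalizes), but it is not spelled out.

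The paper avoids this difficulty entirely by a short, elegant trick that you missed: it promotes $\varepsilon$ to a phase variable and considers the \emph{extended} semiflow $U(t)(X,\varepsilon)=(T_\varepsilon(t)X,\varepsilon)$ on the compact set $\mathbb{T}\times[0,\varepsilon_1]$, with the decomposition $X^0=\mathbb{T}^0\times[0,\varepsilon_1]$ and $\partial X^0=\partial\mathbb{T}^0\times[0,\varepsilon_1]$, and the isolated covering $M_i\times[0,\varepsilon_1]$. A single application of Hale--Waltman to $U$ then produces one $\Theta>0$ that works for every $\varepsilon$, because $\mathrm{dist}(U(t)(X,\varepsilon),\partial X^0)=\mathrm{dist}(T_\varepsilon(t)X,\partial\mathbb{T}^0)$. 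Your repulsivity estimates $\kappa_1,\kappa_2$ uniform in $\varepsilon$ are exactly what is needed to verify $W^s(M_i\times[0,\varepsilon_1])\cap X^0=\emptyset$ for the extended flow, so you in fact have all the pieces; you only need to reorganize them around the extended semiflow rather than trying to patch together the $\varepsilon$-by-$\varepsilon$ conclusions afterwards.
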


The proof of this lemma relies on the application of the results of Hale and
Waltman in \cite{Hale-Waltman}.

\begin{proof}
Firstly recall that
\begin{equation*}
\left(F_\varepsilon,G_\varepsilon\right)\to \left(F,G\right)\text{ as $%
\varepsilon\to 0$ in $C^1(\mathbb{T})$}.
\end{equation*}
Next fix $\varepsilon_1\in (0,\varepsilon_0]$ such that
\begin{equation}  \label{3.13}
\begin{split}
&\partial_U F_\varepsilon(0,0)>\frac{1}{2}\partial_U F(0,0)>0,\;\;\partial_U
F_\varepsilon(\gamma,0)<\frac{1}{2}\partial_U F(\gamma,0)<0, \\
&\partial_V G_\varepsilon(0,0)<\frac{1}{2}\partial_V G(0,0)<0,\;\;\partial_V
G_\varepsilon(\gamma,0)>\frac{1}{2}\partial_V G(\gamma,0)>0.
\end{split}%
\end{equation}
Now, in order to apply the result of Hale and Waltman, consider the extended
semiflow $U(t):\mathbb{T}\times [0,\varepsilon_1]\to \mathbb{T}\times
[0,\varepsilon_1]$ given by
\begin{equation*}
U(t)%
\begin{pmatrix}
X \\
\varepsilon%
\end{pmatrix}%
:=%
\begin{pmatrix}
T_\varepsilon(t)X \\
\varepsilon%
\end{pmatrix}%
,\;\forall
\begin{pmatrix}
X \\
\varepsilon%
\end{pmatrix}%
\in\mathbb{T}\times [0,\varepsilon_1].
\end{equation*}
Then $U$ becomes a strongly continuous semiflow on the compact set $X:=%
\mathbb{T}\times [0,\varepsilon_1]$. Next consider the two positively
invariant sets (see \eqref{3.11})
\begin{equation*}
X^0:=\mathbb{T}^0\times [0,\varepsilon_1]\text{ and }\partial X^0=\partial%
\mathbb{T}^{0}\times [0,\varepsilon_1].
\end{equation*}

Now in order to prove the lemma, we will show that the pair $\left(\partial
X^0,X^0\right)$ is uniformly persistent with respect to the extended
semiflow $U$. To that aim, observe that $U$ possesses a compact global attractor, denoted by $%
A$. Then $U|_{\partial X^0}$ also admits a global attractor $%
A_\partial=\left([0,\gamma]\times \{0\}\right)\times [0,\varepsilon_1]$
while $\tilde A_\partial:=\bigcup_{Z\in A_\partial} \omega(X)$ can be decomposed as
the follows
\begin{equation*}
\tilde A_\partial=M_1\bigcup M_2\text{ with }M_1:=\left\{%
\begin{pmatrix}
0 \\
0%
\end{pmatrix}%
\right\}\times [0,\varepsilon_1]\text{ and }M_2:= \left\{%
\begin{pmatrix}
\gamma \\
0%
\end{pmatrix}%
\right\}\times [0,\varepsilon_1],
\end{equation*}
that corresponds to a covering of $\tilde A_\partial$ by disjoint compact
isolated invariant sets $M_1$ and $M_2$ for $U|_{\partial X^0}$. Furthermore
$M_1$ is chained to $M_2$ and this covering is acyclic (see \cite%
{Hale-Waltman}), since $\partial_U F_\varepsilon(0,0)>0$ and $\partial_U
F_\varepsilon(\gamma,0)<0$.

Next since $\{U(t)\}_{t\geq 0}$ is bounded dissipative and completely
continuous on $X$ for each $t\geq 0$, in view of Theorem 4.1 in \cite%
{Hale-Waltman} to prove that the pair $\left(\partial X^0,X^0\right)$ is
uniformly persistent, it is sufficient to check that
\begin{equation*}
W^s\left(M_i\right)\cap X^0=\emptyset,\;\forall i=1,2.
\end{equation*}
This latter property follows from the same repulsiveness arguments as the
ones developed in Proposition \ref{PROP2.5} using the inequalities in %
\eqref{3.12}.
\end{proof}

Using the above lemma one obtains the following decomposition result.

\begin{proposition}
\label{PROP3.7} For each $\varepsilon\in [0,\varepsilon_1]$, there exist a
global attractor $\mathcal{A}_{0,\varepsilon}\subset \mathbb{T}^0$ and a
global attractor $\mathcal{A}_{\partial,\varepsilon}$ in $\partial\mathbb{T}%
^0$ for $T_\varepsilon$ and the following decomposition for the global
attractor $\mathcal{A}_\varepsilon$ (provided by Lemma \ref{LE3.5}) holds
true
\begin{equation}  \label{3.14}
\mathcal{A}_\varepsilon=\mathcal{A}_{0,\varepsilon}\bigcup W^u\left(\mathcal{%
A}_{\partial,\varepsilon}\right),
\end{equation}
where $W^u\left(\mathcal{A}_{\partial,\varepsilon}\right)=\left\{X\in
\mathcal{A}_\varepsilon:\alpha(X)\subset \mathcal{A}_{\partial,\varepsilon}%
\right\}$. Furthermore the family $\left(\mathcal{A}_{0,\varepsilon}\right)_{%
\varepsilon\in [0,\varepsilon_0]}$ is upper semi-continuous.
\end{proposition}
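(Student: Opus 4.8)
\textbf{Proof proposal for Proposition \ref{PROP3.7}.}

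The plan is to mimic the structure used in Section 2 for the unperturbed system \eqref{2.1}, now applied uniformly to the family $T_\varepsilon$, exploiting the uniform persistence just obtained in Lemma \ref{LE3.6} together with the attractor machinery of Hale and the Hale--Waltman decomposition theory. First I would fix $\varepsilon \in [0,\varepsilon_1]$ and recall that $T_\varepsilon$ leaves both $\mathbb{T}^0$ and $\partial\mathbb{T}^0$ positively invariant, cf. \eqref{3.12}. Restricting $T_\varepsilon$ to the compact set $\partial\mathbb{T}^0 = [0,\gamma]\times\{0\}$ (this equality holds because on $V=0$ the $V$-equation in \eqref{3.11} is trivially preserved and the set is bounded by $\mathbb{T}$), the semiflow is completely continuous and bounded dissipative, so by Theorem 3.4.8 in \cite{Hale00} it admits a global attractor $\mathcal{A}_{\partial,\varepsilon}$, which by the argument of Section 2 is exactly $[0,\gamma]\times\{0\}$ for every $\varepsilon$ since it contains the two boundary equilibria and the heteroclinic segment joining them. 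For the interior, the uniform persistence from Lemma \ref{LE3.6} gives a $\Theta>0$ such that every orbit starting in $\mathbb{T}^0$ eventually enters and remains in the compact set $\mathbb{T}^\Theta := \{X\in\mathbb{T}: \mathrm{dist}(X,\partial\mathbb{T}^0)\geq\Theta\}\subset\mathbb{T}^0$; restricting $T_\varepsilon$ to this absorbing compact set again yields (by Theorem 3.4.8 in \cite{Hale00}) a global attractor $\mathcal{A}_{0,\varepsilon}\subset\mathbb{T}^0$, attracting all compact subsets of $\mathbb{T}^0$.

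Next I would establish the decomposition \eqref{3.14}. Since $\mathbb{T}$ is positively invariant, compact and connected, and since the pair $(\partial X^0,X^0)$ was shown in Lemma \ref{LE3.6} to be uniformly persistent for the extended semiflow $U$ on $X=\mathbb{T}\times[0,\varepsilon_1]$, I can apply the structural result of Hale and Waltman (Theorem 3.2 together with the discussion around Theorem 4.1 in \cite{Hale-Waltman}): every point $X\in\mathcal{A}_\varepsilon$ not lying in $\mathcal{A}_{\partial,\varepsilon}$ has $\alpha(X)\subset\mathcal{A}_{\partial,\varepsilon}$ and $\omega(X)\subset\mathcal{A}_{0,\varepsilon}$. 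Concretely, a full orbit in $\mathcal{A}_\varepsilon$ either lies entirely in $\partial\mathbb{T}^0$ (in which case, by invariance of $\mathcal{A}_{\partial,\varepsilon}$ as the attractor in that face, it belongs to $\mathcal{A}_{\partial,\varepsilon}\subset \mathcal{A}_{0,\varepsilon}\cup W^u(\mathcal{A}_{\partial,\varepsilon})$ trivially), or it lies in $\mathbb{T}^0$ for all time — and then uniform persistence forces it into $\mathbb{T}^\Theta$ in backward and forward time, hence both $\alpha(X)$ and $\omega(X)$ are contained in $\mathcal{A}_{0,\varepsilon}$, so $X\in\mathcal{A}_{0,\varepsilon}$ — or it is a connecting orbit with $\alpha$-limit in $\partial\mathbb{T}^0$ and $\omega$-limit in $\mathbb{T}^0$, whence $\alpha(X)\subset\mathcal{A}_{\partial,\varepsilon}$ and $X\in W^u(\mathcal{A}_{\partial,\varepsilon})$. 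The reverse inclusion $\mathcal{A}_{0,\varepsilon}\cup W^u(\mathcal{A}_{\partial,\varepsilon})\subset\mathcal{A}_\varepsilon$ is immediate since both sets consist of bounded full orbits and $\mathcal{A}_\varepsilon$ is the maximal compact invariant set. This gives \eqref{3.14}.

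Finally, for the upper semi-continuity of $\left(\mathcal{A}_{0,\varepsilon}\right)_{\varepsilon\in[0,\varepsilon_0]}$, I would argue by the standard perturbation principle: the map $(\varepsilon,t,X)\mapsto T_\varepsilon(t)X$ is continuous from $[0,\varepsilon_1]\times[0,\infty)\times\mathbb{T}$ into $\mathbb{T}$, and the absorbing set $\mathbb{T}^\Theta$ can be chosen uniformly in $\varepsilon\in(0,\varepsilon_1]$ thanks to Lemma \ref{LE3.6} (the constant $\Theta$ there is independent of $\varepsilon$); hence Theorem 3.5.2 in \cite{Hale00}, applied on the fixed compact set $\mathbb{T}^\Theta$, gives upper semi-continuity of $\varepsilon\mapsto\mathcal{A}_{0,\varepsilon}$ at every $\widehat\varepsilon\in(0,\varepsilon_1]$. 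The only delicate point is the limit $\widehat\varepsilon=0$: there one must check that $\mathcal{A}_{0,0}=\mathcal{A}_{\mathrm{Int}(\mathbb{R}^2_+)}$ (the interior attractor of the kinetic system from Section 2, which lies inside $\mathbb{T}^\Theta$ for $\Theta$ small, by Proposition \ref{PROP2.5}) and that no interior orbit of $T_\varepsilon$ can escape to $\partial\mathbb{T}^0$ in the limit — which is precisely what the $\varepsilon$-uniform $\Theta$ in Lemma \ref{LE3.6} prevents. I expect this uniform-in-$\varepsilon$ control near $\varepsilon=0$, i.e.\ verifying that the persistence constant does not degenerate and that $\mathbb{T}^\Theta$ is a common absorbing set, to be the main technical obstacle; everything else is a routine transcription of the Hale--Waltman and Hale attractor theorems already invoked.
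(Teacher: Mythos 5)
Your treatment of the existence of $\mathcal{A}_{\partial,\varepsilon}$, $\mathcal{A}_{0,\varepsilon}$ and of the decomposition \eqref{3.14} is in line with the paper: both rely on the positive invariance \eqref{3.12}, Hale's attractor existence theorem, and the Hale--Waltman decomposition (Theorem~3.2 of \cite{Hale-Waltman}) driven by the uniform persistence established in Lemma~\ref{LE3.6}. One small inaccuracy to flag: $\partial\mathbb{T}^0$ is $\partial_u\mathbb{T}\cup\partial_v\mathbb{T}$, not merely the segment $[0,\gamma]\times\{0\}$; the latter is the \emph{attractor} $\mathcal{A}_{\partial,\varepsilon}$ inside $\partial\mathbb{T}^0$, not $\partial\mathbb{T}^0$ itself.

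The genuine gap is in the upper semi-continuity step. You propose to apply Theorem~3.5.2 of \cite{Hale00} ``on the fixed compact set $\mathbb{T}^\Theta$''. This does not work as stated: $\mathbb{T}^\Theta=\{X\in\mathbb{T}:\mathrm{dist}(X,\partial\mathbb{T}^0)\geq\Theta\}$ is \emph{not} positively invariant under $T_\varepsilon$ (the persistence lemma gives $\liminf_{t\to\infty}\mathrm{dist}(T_\varepsilon(t)X,\partial\mathbb{T}^0)\geq\Theta$, not invariance of the $\Theta$-interior neighbourhood; an orbit starting at distance exactly $\Theta$ may leave before returning), so $T_\varepsilon$ does not restrict to a semiflow on $\mathbb{T}^\Theta$ and the hypotheses of Hale's theorem are not met. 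Moreover, Hale's Theorem~3.5.2 gives upper semi-continuity of the \emph{global} attractor of the ambient semiflow (that is your Lemma~\ref{LE3.5}, already established), not of the interior attractor of the restricted semiflow on the non-compact set $\mathbb{T}^0$. The paper sidesteps precisely this issue by invoking Theorem~1.1 in \cite{Magal09}, which is designed to convert the $\varepsilon$-uniform persistence of Lemma~\ref{LE3.6}, together with the continuity of $(\varepsilon,t,X)\mapsto T_\varepsilon(t)X$ and the upper semi-continuity of the full attractors (Lemma~\ref{LE3.5}), into upper semi-continuity of the family of \emph{interior} attractors. To repair your argument you should replace the appeal to \cite[Theorem 3.5.2]{Hale00} by an appeal to \cite[Theorem 1.1]{Magal09}, or else spell out by hand the compactness/diagonal argument (take $X_n\in\mathcal{A}_{0,\varepsilon_n}$ with $\mathrm{dist}(X_n,\mathcal{A}_{0,\widehat\varepsilon})\geq\delta$, extract a limiting complete bounded orbit of $T_{\widehat\varepsilon}$ contained in $\mathbb{T}^\Theta\subset\mathbb{T}^0$ and conclude it lies in $\mathcal{A}_{0,\widehat\varepsilon}$, a contradiction); the latter is exactly the content of Magal's theorem and cannot be shortcut by the Hale result as you invoked it.
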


\begin{proof}
The proof of the above result relies on the application of Theorem 3.2 in
\cite{Hale-Waltman} and Theorem 1.1 in \cite{Magal09}. To see this, let us
first observe that the result in Lemma \ref{LE3.6} can be reformulated as follows:
\begin{equation*}
\liminf_{t\to\infty}\mathrm{dist}\,\left(T_\varepsilon(t)X,\partial\mathbb{T}%
^0\right)\geq \Theta,
\end{equation*}
for all $X\in\mathbb{T}^0$ and all $\varepsilon\in [0,\varepsilon_1]$.
Hence, since for each $\varepsilon\in [0,\varepsilon_1]$, $T_\varepsilon$ is
completely continuous and bounded dissipative and satisfies \eqref{3.11}, the
existence $\mathcal{A}_{0,\varepsilon}$ $\mathcal{A}_{\partial,\varepsilon}$
together with the decomposition \eqref{3.13} follows from the results in
\cite{Hale-Waltman}. Next, using Lemma \ref{LE3.5} and \ref{LE3.6}, the
results of Magal in \cite{Magal09} applies and ensures the upper
semi-continuity for the family of interior attractors $\left\{\mathcal{A}%
_{0,\varepsilon}\right\}_{\varepsilon\in [0,\varepsilon_1]}$. This completes
the proof of the proposition.
\end{proof}

\begin{remark}
One may notice that, for all $\varepsilon\in [0,\varepsilon_1]$ one has
\begin{equation*}
\mathcal{A}_{\partial,\varepsilon}=[0,\gamma]\times \{0\}.
\end{equation*}
This point has -- implicitly -- already been used in the proof of Lemma \ref%
{LE3.6}.
\end{remark}

In the following, we discuss some properties of the interior attractor $%
\mathcal{A}_{0,\varepsilon}$ for $\varepsilon\in (0,\varepsilon_1]$. Our
first result consists in the perturbation of Theorem \ref{TH2.6} and it
reads as follows.

\begin{theorem}
Assume that
\begin{equation*}
\gamma \left( \beta -1\right) <\beta +1.
\end{equation*}
Then there exists $\varepsilon_2\in (0,\varepsilon_1]$ such that
\begin{equation*}
\mathcal{A}_{0,\varepsilon}=\left\{%
\begin{pmatrix}
\overline U_2 \\
\overline V_2%
\end{pmatrix}%
\right\},\;\forall \varepsilon\in [0,\varepsilon_2].
\end{equation*}
In other words, the interior attractor reduces to the interior equilibrium
for all $\varepsilon>0$ small enough.
\end{theorem}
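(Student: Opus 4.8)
The plan is to confine the compact invariant set $\mathcal{A}_{0,\varepsilon}$ to an arbitrarily small neighbourhood of the interior equilibrium, so that the local stability of that equilibrium forces $\mathcal{A}_{0,\varepsilon}$ to reduce to a single point. First I would record two preliminary facts. On the one hand, $(\overline U_2,\overline V_2)$ remains the unique equilibrium of the reduced system \eqref{3.11} inside $\mathbb{T}^0$ for all $\varepsilon\in[0,\varepsilon_1]$: an equilibrium of \eqref{3.11} lifts to an equilibrium of \eqref{3.4} lying on the global center manifold $M_\varepsilon$, which (using $\|\Phi_\varepsilon\|_\infty\to0$) belongs to $\mathbb{E}$ once $\varepsilon$ is small, and by Remark \ref{REM3.1} the equilibria of \eqref{3.4} in $\mathbb{E}$ are exactly the lifts of the equilibria $(0,0)$, $(\gamma,0)$, $(\overline U_2,\overline V_2)$ of \eqref{2.1}; in particular the interior equilibrium does not move with $\varepsilon$. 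On the other hand, for $\varepsilon=0$ system \eqref{3.11} coincides with \eqref{2.1}, so $\mathcal{A}_{0,0}=\{(\overline U_2,\overline V_2)\}$ by Theorem \ref{TH2.6}.

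Next I would exploit that $(F_\varepsilon,G_\varepsilon)\to(F,G)$ in $C^1(\mathbb{T})$ as $\varepsilon\to0$, so that the Jacobian of \eqref{3.11} at $(\overline U_2,\overline V_2)$ converges to the Jacobian of \eqref{2.1} at the same point; by Lemma \ref{LE2.2}, under the strict inequality $\gamma(\beta-1)<\beta+1$ the latter has two eigenvalues with strictly negative real part. Hence there are $\varepsilon_2'\in(0,\varepsilon_1]$ and $r_0>0$ with $\overline B((\overline U_2,\overline V_2),r_0)\subset\mathbb{T}^0$ such that $(\overline U_2,\overline V_2)$ is a hyperbolic sink of \eqref{3.11} for every $\varepsilon\in[0,\varepsilon_2']$; consequently its local unstable set is trivial and the maximal $T_\varepsilon$-invariant subset of $\overline B((\overline U_2,\overline V_2),r_0)$ is $\{(\overline U_2,\overline V_2)\}$.

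To conclude, I would invoke the upper semi-continuity of $\varepsilon\mapsto\mathcal{A}_{0,\varepsilon}$ at $\varepsilon=0$ from Proposition \ref{PROP3.7}: together with $\mathcal{A}_{0,0}=\{(\overline U_2,\overline V_2)\}$ this provides $\varepsilon_2\in(0,\varepsilon_2']$ such that $\mathcal{A}_{0,\varepsilon}\subset\overline B((\overline U_2,\overline V_2),r_0)$ for all $\varepsilon\in[0,\varepsilon_2]$. Being a nonempty compact $T_\varepsilon$-invariant set contained in that ball, $\mathcal{A}_{0,\varepsilon}$ is contained in the ball's maximal invariant subset, whence $\mathcal{A}_{0,\varepsilon}\subseteq\{(\overline U_2,\overline V_2)\}$; equality holds since $(\overline U_2,\overline V_2)$ is a fixed point of $T_\varepsilon$ lying in $\mathbb{T}^0$. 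The case $\varepsilon=0$ is Theorem \ref{TH2.6}, which completes the argument.

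The main obstacle is the middle step: one has to make sure the center-manifold reduction neither destroys nor displaces the interior equilibrium and that its linearization remains a sink uniformly for small $\varepsilon$; the strictness of $\gamma(\beta-1)<\beta+1$ is essential here, since it excludes the borderline Hopf value of Remark \ref{REM2.3}. An alternative more in the spirit of Section 2 would use Lemma \ref{LE3.6} to trap $\mathcal{A}_{0,\varepsilon}$ inside a fixed box $[\eta,\eta^{-1}]^2$, transfer the Dulac estimate \eqref{Dulac} to $(F_\varepsilon,G_\varepsilon)$ by $C^1$-closeness (the inequality \eqref{Dulac} holds with $-m_\eta$ replaced by $-m_\eta/2$ for $\varepsilon$ small), thereby rule out periodic orbits and homoclinic loops of \eqref{3.11} in $\mathbb{T}^0$, and finish with the Poincar\'e--Bendixson theorem and the uniqueness and local asymptotic stability of the interior equilibrium.
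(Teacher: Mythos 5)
Your argument is correct, but it follows a genuinely different route from the paper. The paper's proof is the alternative you sketch at the end: it fixes a compact set $K\subset\mathbb{T}^0$ containing all $\mathcal{A}_{0,\varepsilon}$ uniformly away from $\partial\mathbb{T}^0$ (via Lemma~\ref{LE3.6}), transfers the Dulac inequality \eqref{Dulac} from $(F,G)$ to $(F_\varepsilon,G_\varepsilon)$ by $C^1$-closeness on $K$, and then rules out periodic and homoclinic orbits to conclude via Poincar\'e--Bendixson. Your primary argument instead combines the upper semi-continuity of $\varepsilon\mapsto\mathcal{A}_{0,\varepsilon}$ at $\varepsilon=0$ (Proposition~\ref{PROP3.7}, with $\mathcal{A}_{0,0}=\{(\overline U_2,\overline V_2)\}$ from Theorem~\ref{TH2.6}) with the observation that $(\overline U_2,\overline V_2)$ is a hyperbolic sink of \eqref{3.11} uniformly in small $\varepsilon$, so that the maximal $T_\varepsilon$-invariant set in a fixed small ball is the equilibrium itself; trapping $\mathcal{A}_{0,\varepsilon}$ in that ball forces the conclusion. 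Both approaches need the same two ingredients (uniform separation from the boundary and $C^1$ convergence of the perturbed field), but they use them differently: yours is softer, avoids transferring the Dulac estimate, and is essentially a local argument once upper semi-continuity pins the attractor near the sink, whereas the paper's Dulac route is more global and would survive even without the explicit semi-continuity statement. One small point worth making explicit in your write-up: the claim that the maximal invariant subset of $\overline B((\overline U_2,\overline V_2),r_0)$ is the equilibrium uses more than the local unstable set being trivial; the clean justification is a quadratic Lyapunov function built from the Hurwitz Jacobian, chosen so that a single pair $(P,r_0)$ works uniformly for $\varepsilon\in[0,\varepsilon_2']$ by $C^1$-convergence of the vector field. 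With that spelled out, your proof is complete and correct.
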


\begin{proof}
The proof of this result relies on the application of Dulac's criterion.
Note that due to Lemma \ref{LE3.6}, one has
\begin{equation*}
\inf_{X\in \partial\mathbb{T}^0} \mathrm{dist}\,\left(X,\mathcal{A}%
_{0,\varepsilon}\right)\geq \Theta,\;\forall \varepsilon\in
[0,\varepsilon_0].
\end{equation*}
Let $K\subset \mathbb{T}$ be compact such that
\begin{equation*}
\inf_{X\in \partial\mathbb{T}^0} \mathrm{dist}\,\left(X,K\right)\geq \frac{%
\Theta}{2}\text{ and }A_{0,\varepsilon}\subset K\,\;\forall \varepsilon\in
[0,\varepsilon_0].
\end{equation*}
As for the proof of Theorem \ref{TH2.6}, we consider the function $\varphi(U,V)=\dfrac{1+U}{U}
V^{\xi-1}$. Then, since $(F_\varepsilon,G_\varepsilon)\to (F,G)$ as $%
\varepsilon\to 0$ for the topology of $C^1(\mathbb{T})$, one has
\begin{equation*}
\left[ \partial_U (\varphi F_\varepsilon)+\partial_V (\varphi G_\varepsilon) %
\right]\to \left[ \partial_U (\varphi F)+\partial_V (\varphi G) \right],
\end{equation*}
uniformly for $(U,V)\in K$ as $\varepsilon\to 0$. According to the
computations \eqref{Dulac} recalled in Theorem \ref{TH2.6} one has
$$
\max_{(U,V)\in K}\left[ \partial_U (\varphi F)+\partial_V (\varphi G) \right]<0.
$$
As a consequence, there exists $\varepsilon_2\in (0,\varepsilon_1]$ small enough and $\delta>0$ such that, for all $\varepsilon\in [0,\varepsilon_2]$ one has
\begin{equation*}
\left[ \partial_U (\varphi F_\varepsilon)+\partial_V (\varphi G_\varepsilon) %
\right]\leq -\delta,\;\forall (U,V)\in K.
\end{equation*}
Since $\mathcal{A}_{0,\varepsilon}\subset K$ for all $\varepsilon$ small
enough, the result follows using Dulac's criterion.
\end{proof}

\begin{lemma}
Assume that $\gamma \left( \beta -1\right) >\beta +1$. Then there exists $%
\varepsilon_3\in (0,\varepsilon_1]$ such that the interior equilibrium $%
\left(\overline U_2,\overline V_2\right)$ is an unstable spiral points for the
semiflow $T_\varepsilon$, for all $\varepsilon\in [0,\varepsilon_3]$. More
precisely, the linearized equation of system \eqref{3.11} around the interior
equilibrium has two complex conjugated eigenvalues with strictly positive
real parts, that is a two dimensional unstable manifold.
\end{lemma}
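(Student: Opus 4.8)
The plan is to treat the reduced planar system \eqref{3.11} as a $C^1$-small perturbation, near the interior equilibrium, of the kinetic system \eqref{2.1}, for which the linearisation at $(\overline U_2,\overline V_2)$ has already been computed in the proof of Lemma \ref{LE2.2}; the conclusion will then follow from the continuous dependence of the eigenvalues of a $2\times 2$ matrix on its entries.

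First I would verify that $(\overline U_2,\overline V_2)$ is a stationary point of \eqref{3.11} for every $\varepsilon\in[0,\varepsilon_0]$. Since $(\overline U_2,\overline V_2)$ is the interior equilibrium of \eqref{2.1}, we have $F(\overline U_2,\overline V_2)=G(\overline U_2,\overline V_2)=0$, and one checks directly that the constant map $(U_1,U_2,V_1,V_2)\equiv(\overline U_2,0,\overline V_2,0)$ solves the truncated system \eqref{3.4}; by Remark \ref{REM3.1} it lies in $\mathbb E$, and being constant it belongs to $BC^\eta(\mathbb R;\mathbb R^4)$, so Theorem \ref{TH3.2}(ii) forces it onto the global centre manifold $M_\varepsilon$, i.e. $\Phi_\varepsilon(\overline U_2,\overline V_2)=0_{\mathbb R^2}$. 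Hence $F_\varepsilon(\overline U_2,\overline V_2)=G_\varepsilon(\overline U_2,\overline V_2)=0$. I would also record that $\overline U_2=\tfrac{1}{\beta-1}>0$ and $(\overline U_2,\overline V_2)\in\mathrm{Int}(\mathbb T)$ by Proposition \ref{PROP2.4}, so the cut-off functions $\rho$ and $\chi$ are inactive in a neighbourhood of this point and \eqref{3.11} agrees there with the untruncated reduced dynamics; in particular the linearisation is meaningful.

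Next I would pass to the linearisation. Theorem \ref{TH3.2} gives $\|\Phi_\varepsilon\|_\infty\to0$ and $\|\Phi_\varepsilon\|_{\mathrm{Lip}}\to0$ as $\varepsilon\to0$, hence $(F_\varepsilon,G_\varepsilon)\to(F,G)$ in $C^1(\mathbb T)$ (a convergence already used above in this section). Consequently the Jacobian $J_\varepsilon$ of \eqref{3.11} at $(\overline U_2,\overline V_2)$ converges, as $\varepsilon\to0$, to the Jacobian $J_0$ of \eqref{2.1} at the interior equilibrium, i.e. to the matrix displayed in the proof of Lemma \ref{LE2.2}. Under the assumption $\gamma(\beta-1)>\beta+1$, that lemma tells us $J_0$ has two complex conjugate eigenvalues with strictly positive real part, equivalently $\mathrm{tr}\,J_0>0$ and $(\mathrm{tr}\,J_0)^2-4\det J_0<0$. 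Since trace and determinant are continuous functions of the matrix entries, there is $\varepsilon_3\in(0,\varepsilon_1]$ such that $\mathrm{tr}\,J_\varepsilon>0$ and $(\mathrm{tr}\,J_\varepsilon)^2-4\det J_\varepsilon<0$ for every $\varepsilon\in[0,\varepsilon_3]$; therefore $J_\varepsilon$ has a conjugate pair of non-real eigenvalues with strictly positive real part, so $(\overline U_2,\overline V_2)$ is an unstable spiral point of $T_\varepsilon$ whose unstable manifold is two-dimensional.

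The whole argument is a perturbation/continuity argument, so I do not expect any step to be genuinely hard; the only point that demands a little care is the first one — making sure the interior equilibrium does not drift with $\varepsilon$, that $\Phi_\varepsilon$ vanishes there, and that the truncations $\rho,\chi$ play no role near it — after which the statement reduces to continuity of the spectrum of a $2\times 2$ matrix.
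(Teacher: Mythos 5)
Your proposal is correct and follows essentially the same route as the paper: treat the reduced system \eqref{3.11} as a $C^1$-small perturbation of \eqref{2.1} near the interior equilibrium, invoke Lemma \ref{LE2.2} for the unperturbed Jacobian, and conclude by continuity of simple eigenvalues (you phrase this via continuity of trace and determinant, the paper via simplicity of $\lambda_\pm$, but these are the same argument for a $2\times 2$ matrix). The only substantive addition is your careful opening step verifying that $(\overline U_2,\overline V_2)$ remains an equilibrium of \eqref{3.11} for every $\varepsilon$ — that $\Phi_\varepsilon$ vanishes there and that the cut-offs $\rho,\chi$ are inactive nearby — a point the paper takes for granted (it is recorded in passing just after this lemma with the remark that \eqref{3.11} has the same equilibria as \eqref{2.1}); your justification via the constant bounded solution of \eqref{3.4} and Theorem \ref{TH3.2}(ii) is a clean way to supply it.
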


\begin{proof}
Consider the Jacobian matrix, denoted by $J_\varepsilon$, associated to %
\eqref{3.11} at $\left(\overline U_2,\overline V_2\right)$. Since $(F_\varepsilon,G_%
\varepsilon)$ is $C^1(\mathbb{T})-$close to $(F,G)$ as $\varepsilon\to 0$,
one has
\begin{equation*}
J_\varepsilon=J+o(1)\text{ as }\varepsilon\to 0.
\end{equation*}
Herein $J$ is the Jacobian matrix at $\left(\overline U_2,\overline V_2\right)$ of %
\eqref{3.11} with $\varepsilon=0$ (that corresponds to system \eqref{2.1}).
According to Lemma \ref{LE2.2}, the eigenvalues $\lambda_\pm$ of $J$ are
simple so that the eigenvalues of $J_\varepsilon$, $\lambda_{\pm,%
\varepsilon} $ are simple and continuous with respect to $\varepsilon$.
Hence $\lambda_{\pm,\varepsilon}=\lambda_{\pm}+o(1)$. This completes the
proof of the result since $\lambda_\pm$ are conjugated complex numbers with
positive real parts.
\end{proof}

Note that the system \eqref{3.11} has the same equilibria as system \eqref{2.1} and the system \eqref{3.11} has the boundary equilibria given by
\begin{equation*}
(\overline{U}_{0},\overline{V}_{0})=(0,0)\text{ and }(\overline{U}_{1},%
\overline{V}_{1})=(\gamma ,0)
\end{equation*}%
and the unique \textit{interior equilibrium} given by
\begin{equation*}
(\overline{U}_{2},\overline{V}_{2})=\left( \dfrac{1}{\beta -1},\dfrac{\alpha
\beta \left[ \gamma \left( \beta -1\right) -1\right] }{\left( \beta
-1\right) ^{2}}\right)
\end{equation*}%
whenever $\gamma \left( \beta -1\right) >1$.

As a consequence of the Poincar\'e-Bendixon theorem, one
obtains the following corollary.
\begin{corollary} \label{CO3.11}
Assume that $\gamma \left( \beta -1\right) >\beta +1$. Then there exists $%
\varepsilon_4\in (0,\varepsilon_3]$ such that for all $\varepsilon\in
[0,\varepsilon_4]$, the interior attractor $\mathcal{A}_{0,\varepsilon}$
contains a (non-trivial) periodic orbit surrounding the interior equilibrium
$\left(\overline U_2,\overline V_2\right)$.
\end{corollary}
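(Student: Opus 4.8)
\textbf{Proof proposal for Corollary \ref{CO3.11}.}

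The plan is to combine the uniform persistence estimate of Lemma \ref{LE3.6} with the instability of the interior equilibrium (the preceding lemma, valid for $\varepsilon\in[0,\varepsilon_3]$) and then invoke the Poincar\'e--Bendixson theorem in the plane. First I would fix $\varepsilon_4\in(0,\varepsilon_3]$ so small that, for every $\varepsilon\in[0,\varepsilon_4]$, the interior equilibrium $(\overline U_2,\overline V_2)$ is an unstable spiral point for the planar flow $T_\varepsilon$ on $\mathbb{T}$. Next, recall from Lemma \ref{LE3.6} that there is $\Theta>0$ such that every trajectory starting in $\mathbb{T}^0$ eventually enters and remains in the compact set $K_\Theta:=\{X\in\mathbb{T}:\mathrm{dist}\,(X,\partial\mathbb{T}^0)\geq\Theta\}\subset\mathbb{T}^0$; in particular $K_\Theta$ is positively invariant up to a translation in time, and one may shrink it slightly to a genuinely positively invariant compact annular-type region not containing $(\overline U_2,\overline V_2)$ on its boundary.

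The key step is to produce, for each such $\varepsilon$, a compact positively invariant region $\mathcal{R}_\varepsilon\subset\mathbb{T}^0$ whose boundary consists of an outer curve (provided by the attraction to $K_\Theta$ from Lemma \ref{LE3.6}, or simply the boundary of a large sub-triangle that the flow enters) and a small inner curve, namely the boundary of a tiny disk $B((\overline U_2,\overline V_2),r)$ from which the flow is expelled because $(\overline U_2,\overline V_2)$ is a repeller: since the linearization $J_\varepsilon$ has two eigenvalues with strictly positive real part, by the Hartman--Grobman theorem (or directly by constructing a Lyapunov function for the linear part) there is $r=r(\varepsilon)>0$ such that the flow crosses $\partial B((\overline U_2,\overline V_2),r)$ outward. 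Then $\mathcal{R}_\varepsilon=K_\Theta\setminus B((\overline U_2,\overline V_2),r)$ (after the minor adjustment above) is compact, positively invariant, contains no equilibrium — the only interior equilibrium is $(\overline U_2,\overline V_2)$, which has been removed, and $\partial\mathbb{T}^0$ is excluded by the persistence estimate — and is doubly connected. The Poincar\'e--Bendixson theorem then forces the $\omega$-limit set of any trajectory in $\mathcal{R}_\varepsilon$ to be a periodic orbit, which necessarily surrounds $(\overline U_2,\overline V_2)$. Finally, since $\mathcal{R}_\varepsilon$ is positively invariant and attracts $\mathbb{T}^0$ in the relevant sense, this periodic orbit lies in the interior attractor $\mathcal{A}_{0,\varepsilon}$, which proves the claim.

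The main obstacle is the uniformity in $\varepsilon$: one must check that the radius $r(\varepsilon)$ of the inner repelling disk can be chosen so that $B((\overline U_2,\overline V_2),r(\varepsilon))$ stays away from $\partial\mathbb{T}^0$ uniformly (so that $\mathcal{R}_\varepsilon$ is nonempty and doubly connected for all $\varepsilon\in[0,\varepsilon_4]$), and that the outward-crossing property persists under the $o(1)$ perturbation $J_\varepsilon=J+o(1)$. Both follow from the continuity of $(F_\varepsilon,G_\varepsilon)\to(F,G)$ in $C^1(\mathbb{T})$ together with the fact that $\mathrm{Re}\,\lambda_{\pm,\varepsilon}>0$ is bounded below uniformly on $[0,\varepsilon_4]$ by the simplicity and continuity of the eigenvalues; a compactness argument on the parameter interval then yields a single admissible $r>0$. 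The rest is the standard planar Poincar\'e--Bendixson argument, already used for \eqref{2.1} in Theorem \ref{TH2.8}.
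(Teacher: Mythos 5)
Your argument is correct and matches the paper's (unstated) reasoning: the paper gives Corollary~\ref{CO3.11} with no written proof, simply noting it follows from Poincar\'e--Bendixson once the interior equilibrium is shown to be an unstable spiral and the interior attractor is known to exist in $\mathbb{T}^0$ away from the boundary. Your construction of the annular trapping region fills in exactly these standard details; the only small imprecision is that the set $K_\Theta$ built from the $\liminf$ estimate of Lemma~\ref{LE3.6} is not literally positively invariant, but this is easily repaired (e.g.\ take a compact positively invariant neighborhood of $\mathcal{A}_{0,\varepsilon}$, which exists by attractor theory, or apply Poincar\'e--Bendixson directly to $\omega$-limit sets of points in the compact invariant set $\mathcal{A}_{0,\varepsilon}\setminus\{(\overline U_2,\overline V_2)\}$), and you already flag this.
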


\subsection{Uniqueness of the periodic orbit and interior attractor}

In this section we discuss the uniqueness of the periodic orbit
for system \eqref{3.11} and its relationship with the global interior attractor when the parameters satisfy the condition
\begin{equation}\label{cond}
\gamma \left( \beta -1\right) >\beta +1\;
\Leftrightarrow\;
\overline{U}_2 <\dfrac{\gamma-1}{2}.
\end{equation}

The aim of this section is to prove the following uniqueness result.

\begin{theorem}[Unique stable periodic orbit]
\label{TH3.12} Under condition \eqref{cond}, for all $\varepsilon>0$ small enough,
 there exists a unique stable periodic orbit surrounding the interior
equilibrium and the system has no other periodic orbit.
\end{theorem}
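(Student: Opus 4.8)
The strategy is to transfer Cheng's uniqueness result (Theorem \ref{TH2.8}) for the unperturbed system \eqref{2.1} to the perturbed system \eqref{3.11} by a compactness and contradiction argument, combined with the structural information already collected about $\mathcal{A}_{0,\varepsilon}$. First I would recall what we already know: by Corollary \ref{CO3.11} there is $\varepsilon_4>0$ such that for every $\varepsilon\in[0,\varepsilon_4]$ the interior attractor $\mathcal{A}_{0,\varepsilon}$ contains at least one periodic orbit surrounding $(\overline U_2,\overline V_2)$, and by the previous lemma the interior equilibrium is an unstable spiral; moreover $\mathcal{A}_{0,\varepsilon}\subset K$ with $K$ a fixed compact set bounded away from $\partial\mathbb{T}^0$. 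So existence is free; only uniqueness (and stability) need to be proved.

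For uniqueness I would argue by contradiction: suppose there is a sequence $\varepsilon_n\to 0$ and, for each $n$, two distinct periodic orbits $\Gamma_n^1,\Gamma_n^2$ of \eqref{3.11} with parameter $\varepsilon_n$, both lying in $K$. Since the planar system is analytic-like in its dependence and the orbits are nested (two periodic orbits of a planar system surrounding the same equilibrium must be disjoint and nested, by the Jordan curve theorem and uniqueness of solutions), write $\Gamma_n^1$ inside $\Gamma_n^2$. Using that $(F_\varepsilon,G_\varepsilon)\to(F,G)$ in $C^1(\mathbb T)$ and that the periods of these orbits are bounded above (the orbits stay in the compact annular region $K$ on which the vector field is bounded below away from zero, since it vanishes only at $(\overline U_2,\overline V_2)$, which is exterior to each $\Gamma_n^i$), one extracts subsequences along which $\Gamma_n^1$ and $\Gamma_n^2$ converge in the Hausdorff metric to periodic orbits (or the equilibrium) of the limiting system \eqref{2.1}. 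The key point is to rule out degeneration: the unstable-spiral nature of $(\overline U_2,\overline V_2)$, uniform in $\varepsilon$ on a fixed neighborhood, forces every periodic orbit of \eqref{3.11} to stay outside a fixed disc around the equilibrium, so neither limit can be the equilibrium; hence both limits are genuine periodic orbits of \eqref{2.1}. By Cheng's theorem \eqref{TH2.8} the unperturbed system has exactly one periodic orbit $\Gamma^\star$, so both $\Gamma_n^1$ and $\Gamma_n^2$ converge to the same $\Gamma^\star$. Finally, because $\Gamma^\star$ is hyperbolic (its stability was established by Hsu--Hubbell--Waltman, which in the planar setting gives a nonzero Floquet exponent, i.e. the return map has derivative $\ne 1$), the implicit function theorem applied to the Poincaré return map $P_\varepsilon$ of \eqref{3.11} on a fixed transversal section through $\Gamma^\star$ shows that for $\varepsilon$ small $P_\varepsilon$ has a unique fixed point near $\Gamma^\star$ — contradicting the existence of two distinct orbits $\Gamma_n^1\ne\Gamma_n^2$ both converging there. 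This yields uniqueness for all $\varepsilon$ small enough, and the same hyperbolicity/implicit-function argument shows the unique orbit is stable (the return map is a contraction near the fixed point), since $|P'_\varepsilon|\to|P'_0|<1$.

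It remains to exclude periodic orbits not surrounding the interior equilibrium and not close to $\Gamma^\star$: any periodic orbit of the planar system \eqref{3.11} must, by the index theorem, enclose an equilibrium with index $+1$; the only interior equilibrium has index $+1$, and the boundary equilibria have index $0$ or $-1$ (and $\partial_u\mathbb T,\partial_v\mathbb T$ are invariant, so an orbit in $\mathbb T^0$ cannot cross them), so any periodic orbit must surround $(\overline U_2,\overline V_2)$. If such an orbit stayed in $K$ but did not converge to $\Gamma^\star$, the Hausdorff-limit argument above would still produce a periodic orbit of \eqref{2.1} distinct from $\Gamma^\star$, again contradicting Cheng's theorem; and it cannot escape $K$ since $\mathcal{A}_{0,\varepsilon}\subset K$ and any periodic orbit lies in the interior attractor. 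Thus there is no other periodic orbit, completing the proof.

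The main obstacle I anticipate is making the Hausdorff-convergence step fully rigorous — specifically, obtaining the uniform lower bound on the distance from periodic orbits to $(\overline U_2,\overline V_2)$ and the uniform upper bound on their periods, so that the limit of a converging sequence of periodic orbits is again a (non-degenerate) periodic orbit rather than the equilibrium or a non-closed limiting set; once those uniform bounds are in place, Cheng's uniqueness and the hyperbolicity of $\Gamma^\star$ do the rest through a routine implicit-function-theorem argument on the Poincaré map.
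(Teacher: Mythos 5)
Your overall skeleton coincides with the paper's: take a sequence of periodic orbits of \eqref{3.11}, show they converge in Hausdorff distance to the unique periodic orbit $\Gamma_0$ of the unperturbed system \eqref{2.1} (Cheng), and then exploit hyperbolic stability of $\Gamma_0$ to conclude uniqueness and stability for small $\varepsilon$. The two genuine implementation differences are as follows. First, for the crucial nondegeneracy step — preventing $\Gamma_\varepsilon$ from collapsing to the interior equilibrium — the paper's Lemma \ref{LE3.13} constructs Cheng's Lyapunov-type function $\mathcal F$, applies Green--Riemann to the region enclosed by $\Gamma_\varepsilon$, and concludes that $\Gamma_\varepsilon$ must cross the line $U=\delta$ for any $\delta<\frac{\gamma-1}{2}$. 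You instead propose a uniform ``unstable spiral'' Lyapunov argument giving a fixed repelling disc; this weaker conclusion would suffice (a compact connected invariant subset of $\mathcal{A}_{\mathrm{Int}(\R^2_+)}$ that avoids the equilibrium must be $\Gamma_0$, since every other orbit has $\alpha$-limit equal to the equilibrium), but you do not carry it out, and you yourself flag it as the main obstacle. Second, for the endgame the paper passes to the limit in Cheng's Floquet integral $\int_0^{T_\varepsilon}\bigl[\partial_U F_\varepsilon+\partial_V G_\varepsilon\bigr]\,dt<0$ and cites Hale to conclude stability, then uniqueness; you instead invoke the implicit function theorem for the Poincar\'e return map on a fixed transversal. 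These are the same hyperbolicity fact in different clothes (the integral is the logarithm of the Floquet multiplier, i.e.\ of $P_0'$), so neither buys more than the other.

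One concrete misstep in your sketch: the claim that the periods $T_{\varepsilon_n}$ are bounded above because the speed $|\,(F_\varepsilon,G_\varepsilon)\,|$ is bounded below on the annular compact $K$ is not valid as stated. A lower speed bound gives $T_{\varepsilon_n}\leq L_n/v_{\min}$ where $L_n$ is the arc length of $\Gamma_{\varepsilon_n}$, and arc lengths of simple closed curves in a bounded planar region are not a priori bounded. Fortunately this bound is also unnecessary: Arzel\`a--Ascoli applied to the parametrized solutions (with the fixed time shift chosen so that \eqref{non-deg} holds) already produces, along a subsequence, local uniform convergence to a complete orbit of \eqref{2.1}, and the Hausdorff limit of the curves $\Gamma_{\varepsilon_n}$ is a compact connected invariant set of \eqref{2.1} regardless of period control; the argument about invariant subsets of $\mathcal{A}_{\mathrm{Int}(\R^2_+)}$ avoiding the equilibrium then forces this limit to be $\Gamma_0$. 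The paper itself avoids period bounds in Steps 2--5 of Lemma \ref{LE3.14} and uses a trapping-region argument instead, obtaining the period convergence $T_\varepsilon\to T_0$ only as a by-product. So I would delete the speed-implies-period-bound claim and route the convergence exactly as above.
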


According to Corollary \ref{CO3.11}, for each $\varepsilon>0$ small enough, let $(U_\varepsilon(t),V_\varepsilon(t))$ denotes any non constant periodic orbit of \eqref{3.11} and $T_\varepsilon>0$ its period. The associated closed curve is denoted by $\Gamma_\varepsilon$, that is
$$
\Gamma_\varepsilon=\left\{\left(U_\varepsilon(t),V_\varepsilon(t)\right),\;t\in [0,T_\varepsilon]\right\}.
$$
Recall that $\Gamma_\varepsilon$ encloses the interior equilibrium $(\overline U_2,\overline V_2)$.
Note also that $\Gamma_\varepsilon\subset \mathcal A_{0,\varepsilon}$. Hence Proposition \ref{PROP3.7} ensures that there exists $\theta>0$ such that for all $\varepsilon>0$ small enough
$$
U_\varepsilon(t)\geq \theta,\;V_\varepsilon(t)\geq \theta,\;\forall t\in\R.
$$
Throughout this section we also denote by $\Gamma_0$ the unique non constant periodic orbit of \eqref{2.1}, that corresponds to \eqref{3.11} with $\varepsilon=0$ (see Theorem \ref{TH2.8}). The corresponding periodic solution of \eqref{2.1} is denoted by $(U_0(t),V_0(t))$ while $T_0>0$ is its period.

The idea of this proof is to show that $\Gamma_\varepsilon$ becomes close to $\Gamma_0$ as $\varepsilon\to 0$.
Then, as in \cite{Cheng81} for the unperturbed system, we will prove that for all $\varepsilon>0$ small enough,
\begin{equation*}
\int_0^{T_\varepsilon} \left[\partial_U F_\varepsilon(U_\varepsilon(t),V_\varepsilon(t))+\partial_V G_\varepsilon(U_\varepsilon(t),V_\varepsilon(t))\right]dt<0.
\end{equation*}
According to Hale \cite{Hale}, the latter condition means that $\Gamma_\varepsilon$ is locally asymptotically stable and then it follows that $\Gamma_\varepsilon$ is unique when $\varepsilon>0$ is small enough.

To prove Theorem \ref{TH3.12}, let us firstly prove the following lemma.

\begin{lemma}\label{LE3.13} Let condition \eqref{cond} be satisfied. Then, for each $\delta\in \left(\overline U_2,\frac{\gamma-1}{2}\right)$, there exists $\varepsilon(\delta)>0$ small enough such for all $\varepsilon\in (0,\varepsilon(\delta)]$ the curve $\Gamma_\varepsilon$ intersects the line $U=\delta$. In other words,  one has $\max\left\{U_\varepsilon(t):\;t\in [0,T_\varepsilon]\right\}\geq \delta$ for all $\varepsilon\in (0,\varepsilon(\delta)]$.
\end{lemma}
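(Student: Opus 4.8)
\textbf{Proof plan for Lemma \ref{LE3.13}.}

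The plan is to argue by contradiction, exploiting the upper semi-continuity of the interior attractors $\mathcal{A}_{0,\varepsilon}$ together with the geometry of the unperturbed limit cycle $\Gamma_0$. Fix $\delta\in\left(\overline U_2,\frac{\gamma-1}{2}\right)$. Recall from Theorem \ref{TH2.8} and the symmetry of the nullcline $f(U)=\alpha(\gamma-U)(1+U)$ about $U=\frac{\gamma-1}{2}$ that the unperturbed periodic orbit $\Gamma_0$ surrounds the interior equilibrium $(\overline U_2,\overline V_2)$, and since $\overline U_2<\frac{\gamma-1}{2}$ the orbit $\Gamma_0$ must extend to values of $U$ strictly larger than $\frac{\gamma-1}{2}$ — more precisely, $\max\{U_0(t):t\in[0,T_0]\}>\frac{\gamma-1}{2}>\delta$. (This is the standard fact that a limit cycle of \eqref{2.1} must cross the hump of the prey nullcline; it is essentially the content of Cheng's analysis in \cite{Cheng81}.) So for $\varepsilon=0$ there is a point of $\Gamma_0\subset\mathcal A_{0,0}$ with first coordinate $>\delta$.

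Now suppose, for contradiction, that there is a sequence $\varepsilon_n\downarrow 0$ and periodic orbits $\Gamma_{\varepsilon_n}$ with $\max\{U_{\varepsilon_n}(t):t\in[0,T_{\varepsilon_n}]\}<\delta$ for all $n$; equivalently $\Gamma_{\varepsilon_n}$ lies in the strip $\{U<\delta\}$. Since $\Gamma_{\varepsilon_n}\subset\mathcal A_{0,\varepsilon_n}\subset\mathcal A_{\varepsilon_n}$, upper semi-continuity of the family $\{\mathcal A_\varepsilon\}$ (Lemma \ref{LE3.5}) forces any limit point of $\Gamma_{\varepsilon_n}$ to lie in $\mathcal A_0=\mathcal A_{\mathbb R^2_+}\cap\mathbb T$. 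However this alone does not immediately locate the limit inside the interior attractor, so I would instead use the upper semi-continuity of the \emph{interior} attractors $\{\mathcal A_{0,\varepsilon}\}$ from Proposition \ref{PROP3.7}: it gives $\delta(\mathcal A_{0,\varepsilon_n},\mathcal A_{0,0})\to 0$, and since $\gamma(\beta-1)>\beta+1$ Proposition 2.13 (the interior attractor when there is a periodic orbit) identifies $\mathcal A_{0,0}$ as $\Gamma_0$ together with its interior. The key point is then that each $\Gamma_{\varepsilon_n}$, being a closed curve surrounding $(\overline U_2,\overline V_2)$ and contained in $\{U<\delta\}$, is a Jordan curve whose interior region is contained in $\{U<\delta\}$ as well; by upper semi-continuity this entire filled region is eventually within distance $o(1)$ of $\mathcal A_{0,0}$. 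But the filled region of $\Gamma_0$ contains points with $U>\delta$ (indeed the whole set $\mathcal A_{0,0}$ does), and these points are at a fixed positive distance from the closed set $\{U\le\delta\}\supset$ (filled $\Gamma_{\varepsilon_n}$), contradicting $\delta(\mathcal A_{0,0},\text{filled }\Gamma_{\varepsilon_n})\to 0$. Hence no such sequence exists, and for $\varepsilon\le\varepsilon(\delta)$ the curve $\Gamma_\varepsilon$ must reach $\{U\ge\delta\}$; since $\Gamma_\varepsilon$ is connected and surrounds a point with $U$-coordinate $\overline U_2<\delta$, it must in fact cross the line $U=\delta$.

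The main obstacle, I expect, is making rigorous the passage from ``$\Gamma_{\varepsilon_n}$ lies in $\{U<\delta\}$'' to a contradiction with the upper semi-continuity, because upper semi-continuity only controls how far $\mathcal A_{0,\varepsilon_n}$ is from $\mathcal A_{0,0}$ in one direction and says nothing a priori about the size of $\mathcal A_{0,\varepsilon_n}$. The clean way around this is to argue directly with solutions rather than attractors: pick the point $(U_0(t_0),V_0(t_0))\in\Gamma_0$ with $U_0(t_0)>\delta$, note that by continuous dependence of $T_\varepsilon(t)$ on $(\varepsilon,t,X)$ the solution of \eqref{3.11} starting near $(U_0(0),V_0(0))$ stays uniformly close to $(U_0(t),V_0(t))$ on the compact interval $[0,T_0+1]$ for $\varepsilon$ small, and then invoke the fact that this solution's $\omega$-limit set lies in $\mathcal A_{0,\varepsilon}$ which (by Theorem \ref{TH3.12}'s companion, or simply Poincaré–Bendixson together with instability of the interior equilibrium from the previous lemma) must be the periodic orbit $\Gamma_\varepsilon$. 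Tracking where this trajectory goes, it passes within $o(1)$ of the point $(U_0(t_0),V_0(t_0))$, so $\Gamma_\varepsilon$ accumulates points with $U$-coordinate arbitrarily close to $U_0(t_0)>\delta$; for $\varepsilon$ small enough this yields $\max_t U_\varepsilon(t)\ge\delta$, which is the claim. I would write the proof in this second, more hands-on form, as it avoids the delicate point about the diameter of the perturbed attractor.
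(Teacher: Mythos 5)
Your strategy is genuinely different from the paper's, and unfortunately the second version you settle on has a real gap.

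The paper does not pass through $\Gamma_0$ at all. It constructs a function $\mathcal F(U,V)$ (an $\varepsilon$-perturbed version of the standard Dulac/Lyapunov-type integral from Cheng \cite{Cheng81}), integrates $\tfrac{d}{dt}\mathcal F(U_\varepsilon,V_\varepsilon)$ over one period to get zero, converts this to a Green--Riemann integral over the interior $\Omega_\varepsilon$ of the Jordan curve $\Gamma_\varepsilon$, and observes that the resulting integrand is, up to an $o(1)$ error uniform in $\varepsilon$, equal to $f'(U)/V$, which is strictly positive wherever $U<\delta<\tfrac{\gamma-1}{2}$. If $\Gamma_\varepsilon\subset\{U<\delta\}$ then so is $\Omega_\varepsilon$, the integral would be strictly positive, contradiction. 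Crucially this argument is applied directly to the arbitrary periodic orbit $\Gamma_\varepsilon$, whatever it is.

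Your argument instead first records that $\Gamma_0$ itself reaches beyond $\tfrac{\gamma-1}{2}$ (which is true, and is precisely the $\varepsilon=0$ case of the paper's computation) and then tries to transfer this to $\Gamma_\varepsilon$ by perturbation. The problem is that your trajectory argument only controls \emph{one} periodic orbit, namely the $\omega$-limit of the solution you launch near $\Gamma_0$, whereas the statement must hold for \emph{every} non-constant periodic orbit of \eqref{3.11}. Poincar\'e--Bendixson plus the instability of the interior equilibrium tells you the $\omega$-limit is \emph{some} periodic cycle, not that it is the given $\Gamma_\varepsilon$; and you cannot invoke uniqueness of the periodic orbit because that is exactly what Theorem~\ref{TH3.12} is trying to establish, with Lemma~\ref{LE3.13} as an ingredient. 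Concretely, your argument does not rule out, say, a small periodic orbit tightly encircling $(\overline U_2,\overline V_2)$ inside $\{U<\delta\}$: the trajectory you launch near $\Gamma_0$ would converge to a large cycle and never see the small one. (Nesting of cycles would let you conclude for orbits that \emph{enclose} your limit cycle, since their interiors would contain a point with $U>\delta$, but it gives nothing for orbits strictly \emph{inside} it.) You correctly diagnosed that upper semi-continuity of $\mathcal A_{0,\varepsilon}$ alone is one-sided and insufficient, but the replacement you propose has the same kind of one-sidedness: it lower-bounds the size of one specific cycle, not of all of them. The fix is exactly what the paper does: run the Dulac/Green integral argument directly on $\Gamma_\varepsilon$, so that it applies uniformly to any periodic orbit without knowing in advance which one you are looking at.
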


\begin{proof}
Consider the function
\begin{equation*}
\mathcal{F}  (U,V)=\int_{\overline{U}_{2}}^{U}\frac{\left( \dfrac{\beta \xi }{%
1+\xi }-1+\Psi_\varepsilon(\xi,V)\right) }{\dfrac{\beta \xi }{1+\xi }}d\xi +\frac{1}{\beta }\int_{%
\overline{V}_{2}}^{V}\frac{\eta -\overline{V}_{2}}{\eta }d\eta,
\end{equation*}
wherein we have set $\Psi_\varepsilon(U,V)=-V^{-1}\Phi^2_\varepsilon(U,V)$. Note that since $\Phi_\varepsilon(U,V)$ is $C^1-$small uniformly on the compact set $(U,V)\in\mathbb T$ with $V\geq \theta>0$ and $U\geq \theta>0$ then $\Psi_\varepsilon$ is also $C^1-$small on the same compact set.

Next let us compute the derivative of function $\mathcal{F}  (U_\varepsilon,V_\varepsilon)$ with respect to $t$ along the periodic orbit $\Gamma_\varepsilon$, that yields
\begin{equation*}
\begin{split}
\dfrac{d\mathcal{F}  (U_\varepsilon(t),V_\varepsilon(t))}{dt}&=\displaystyle \frac{\left( \dfrac{\beta U_\varepsilon}{1+U_\varepsilon}-1+\Psi_\varepsilon(U_\varepsilon,V_\varepsilon)\right) }{\dfrac{\beta U_\varepsilon}{1+U_\varepsilon}}\left[
-\Phi^1_{\varepsilon }(U_\varepsilon,V_\varepsilon)+F(U_\varepsilon,V_\varepsilon)\right] +\frac{1}{\beta }\frac{V_\varepsilon-%
\overline{V}_{2}}{V_\varepsilon} V_\varepsilon' \\
&+V_\varepsilon'\int_{\overline{U}_{2}}^{U_\varepsilon}\frac{(1+\xi)\partial_V \Psi_\varepsilon(\xi,V_\varepsilon) }{\beta \xi }d\xi.
\end{split}
\end{equation*}
This rewrites as
\begin{equation*}
\begin{split}
\dfrac{d\mathcal{F}  (U_\varepsilon,V_\varepsilon)}{dt}&=\displaystyle \frac{V_\varepsilon'}{V_\varepsilon}\frac{1+U_\varepsilon}{\beta U_\varepsilon}\left[
-\Phi^1_{\varepsilon }(U_\varepsilon,V_\varepsilon)+F(U_\varepsilon,V_\varepsilon)+\frac{U_\varepsilon}{1+U_\varepsilon}(V_\varepsilon-\overline V_2)\right] \\
&+V'\int_{\overline{U}_{2}}^{U_\varepsilon}\frac{1+\xi}{\beta\xi}\partial_V \Psi_\varepsilon(\xi,V_\varepsilon)d\xi,
\end{split}
\end{equation*}
and denoting by $\tilde \Psi_\varepsilon(U,V)=-\frac{1+U}{\beta U}\Phi^1_{\varepsilon }(U,V)$, this yields
\begin{equation*}
\dfrac{d\mathcal{F}  (U_\varepsilon,V_\varepsilon)}{dt}=\displaystyle \frac{V_\varepsilon'}{\beta V_\varepsilon}\left[\tilde \Psi_\varepsilon((U_\varepsilon,V_\varepsilon)+f(U_\varepsilon)-\overline V_2\right] +V_\varepsilon'\int_{\overline{U}_{2}}^{U_\varepsilon}\frac{\left(\partial_V \Psi_\varepsilon(\xi,V_\varepsilon)\right) }{\dfrac{\beta \xi }{1+\xi }}d\xi.
\end{equation*}
Integrating the above equality on $[0,T_\varepsilon]$ leads
\begin{equation*}
0=\int_0^{T_\varepsilon}\dfrac{d\mathcal{F}  (U_\varepsilon(s),V_\varepsilon(s))}{dt}ds=\oint_{\Gamma_\varepsilon}\left\{\displaystyle \frac{1}{\beta V}\left(\tilde\Psi_\varepsilon(U,V)+f(U)-\overline V_2\right) +\int_{\overline{U}_{2}}^{U}\frac{1+\xi}{\beta \xi}\partial_V \Psi_\varepsilon(\xi,V)d\xi\right\}dV.
\end{equation*}
Now denoting by $\Omega_\varepsilon$ the interior of the periodic curve $\Gamma_\varepsilon$ and using the Green-Riemann formula, we infer the following identity
\begin{equation}\label{id}
0=\int_{\Omega_\varepsilon}\left\{\displaystyle \frac{1}{\beta V}\left(\partial_U \tilde\Psi_\varepsilon(U,V)+f'(U)\right) +\frac{1+U}{\beta U}\partial_V \Psi_\varepsilon(U,V)\right\}dUdV.
\end{equation}
Now fix $\delta\in \left(\overline U_2,\frac{\gamma-1}{2}\right)$ and recall that $\inf_{{U\leq \delta}} f'(U)=f'(\delta)>0$.
Set $K=\{(U,V)\in\mathbb T:\;U\in [\theta,\delta],\;V\geq \theta\}$ and observe that $\partial_V \Psi_\varepsilon$ and $\tilde \Psi_\varepsilon$ tend to $0$ as $\varepsilon\to 0$, uniformly for $(U,V)\in K$. Hence since $K$ is bounded by some constant $R>0$, we obtain uniformly for $(U,V)\in K$ and for all $0<\varepsilon\ll 1$
\begin{equation*}
\frac{1}{V}\left(\partial_U \tilde\Psi_\varepsilon(U,V)+f'(U)\right) +\frac{1+U}{\beta U}\partial_V \Psi_\varepsilon(U,V)\geq f'(\delta)+o(1).
\end{equation*}
Hence there exists $\varepsilon(\delta)>0$ such that for all $\varepsilon\in (0,\varepsilon(\delta)]$ one has
$$
\sup_{(U,V)\in K}\displaystyle \frac{1}{V}\left(\partial_U \tilde\Psi_\varepsilon(U,V)+f'(U)\right) +\frac{1+U}{\beta U}\partial_V \Psi_\varepsilon(U,V)>0.
$$
As a consequence, since $\Gamma_\varepsilon\cup \Omega_\varepsilon\subset K$ and $\Gamma_\varepsilon$ encloses the equilibrium, if the curve $\Gamma_\varepsilon$ does not intersect the line $U=\delta$ for all $\varepsilon>0$ small enough then the integral on the right hand side of \eqref{id} would be positive which is a contradiction and we complete the proof of the lemma.
\end{proof}

We continue the proof of Theorem \ref{TH3.12} by showing the following lemma.

\begin{lemma} \label{LE3.14} Let condition \eqref{cond} be satisfied. Let $\Gamma_0$ denote the unique non-constant periodic orbit of system \eqref{2.1}. Then the following convergence holds
$$
\lim_{\varepsilon \to 0} {\rm d}(\Gamma_\varepsilon, \Gamma_0)=0
$$
where ${\rm d}(\Gamma_\varepsilon, \Gamma_0)$ denotes the Hausdorff's semi-distance given by
$$
{\rm d}(\Gamma_\varepsilon, \Gamma_0):=\sup_{x \in \Gamma_\varepsilon} \delta(x,\Gamma_0)
\text{ with }
\delta(x,\Gamma_0)= \inf_{y \in \Gamma_0} \Vert x-y \Vert.
$$
In other words,  for each neighborhood $V$ of $\Gamma_0$ there exists $\varepsilon_V >0$ such that
$$
\Gamma_\varepsilon \subset V, \forall \varepsilon \in (0,\varepsilon_V].
$$
Furthermore the period $T_\varepsilon>0$ of $\Gamma_\varepsilon$ converges to $T_0$, the period of $\Gamma_0$, as $\varepsilon \to 0$.
\end{lemma}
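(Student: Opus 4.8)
The plan is to argue by compactness and the upper semi-continuity of interior attractors already established in Proposition \ref{PROP3.7}, together with the uniqueness of the periodic orbit $\Gamma_0$ of \eqref{2.1} (Theorem \ref{TH2.8}). First I would observe that, since condition \eqref{cond} holds, the interior attractor of the unperturbed system is, by the Poincar\'e--Bendixson structure theorem and Theorem \ref{TH2.8}, exactly the closure of the union of $\Gamma_0$, the interior equilibrium $(\overline U_2,\overline V_2)$ enclosed by it, and the heteroclinic orbits connecting them; in particular $\Gamma_0$ is the \emph{outer boundary} of $\mathcal A_{0,0}$. Since $\Gamma_\varepsilon\subset\mathcal A_{0,\varepsilon}$ for all small $\varepsilon$, the upper semi-continuity $\delta(\mathcal A_{0,\varepsilon},\mathcal A_{0,0})\to 0$ already gives $\sup_{x\in\Gamma_\varepsilon}\delta(x,\mathcal A_{0,0})\to 0$. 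To upgrade this to $\delta(\Gamma_\varepsilon,\Gamma_0)\to 0$ I would use Lemma \ref{LE3.13}: fixing any $\delta\in(\overline U_2,\frac{\gamma-1}{2})$, the curve $\Gamma_\varepsilon$ reaches $\{U=\delta\}$ for $\varepsilon$ small, so $\Gamma_\varepsilon$ cannot shrink into a small neighbourhood of the equilibrium; it must stay "large" and hence is squeezed against the outer boundary $\Gamma_0$ rather than against the interior heteroclinic/equilibrium part of $\mathcal A_{0,0}$.

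More precisely, I would argue by contradiction. Suppose $\delta(\Gamma_{\varepsilon_n},\Gamma_0)\not\to 0$ along some sequence $\varepsilon_n\to 0$; pick $x_n\in\Gamma_{\varepsilon_n}$ with $\delta(x_n,\Gamma_0)\geq\sigma>0$. By Proposition \ref{PROP3.7} the points $x_n$ lie in the fixed compact set $K\subset\mathbb T^0$, so up to a subsequence $x_n\to x_\star$ with $\delta(x_\star,\Gamma_0)\geq\sigma$, while $\delta(x_\star,\mathcal A_{0,0})=0$ by upper semi-continuity, hence $x_\star\in\mathcal A_{0,0}\setminus\Gamma_0$. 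Now I would run the whole orbit: let $t\mapsto Z_n(t)=(U_{\varepsilon_n}(t),V_{\varepsilon_n}(t))$ be the periodic solution of \eqref{3.11} through $x_n$, parametrised so $Z_n(0)=x_n$. Because the vector fields $(F_\varepsilon,G_\varepsilon)\to(F,G)$ in $C^1(\mathbb T)$ and all $Z_n$ stay in the compact set $K$ with uniformly bounded derivatives, the Arzel\`a--Ascoli theorem gives (after a further subsequence) local-uniform convergence $Z_n\to Z_\star$ on compact time intervals, where $Z_\star$ is a complete bounded orbit of \eqref{2.1} through $x_\star$, hence $Z_\star(\R)\subset\mathcal A_{0,0}$. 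By the Poincar\'e--Bendixson classification, $\omega(Z_\star)$ and $\alpha(Z_\star)$ are each either the equilibrium or $\Gamma_0$. Lemma \ref{LE3.13} forbids the orbit from being trapped near the equilibrium for small $\varepsilon$ (its maximal $U$-coordinate stays $\geq\delta$, hence so does that of $Z_\star$), which rules out $Z_\star$ being the equilibrium itself; and $Z_\star$ cannot be a heteroclinic orbit of \eqref{2.1} inside $\mathcal A_{0,0}$ either, because such an orbit is not periodic while each $Z_n$ has uniformly bounded period (see the period argument below) — so if the periods $T_{\varepsilon_n}$ stay bounded then $Z_\star$ is itself a periodic orbit, forcing $Z_\star(\R)=\Gamma_0$ by uniqueness, contradicting $\delta(x_\star,\Gamma_0)\geq\sigma$.

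It remains to control the period, which is the one genuinely delicate point and which I would handle first, since the argument above uses it. I would show $\limsup_{\varepsilon\to 0}T_\varepsilon<\infty$ and $\liminf_{\varepsilon\to 0}T_\varepsilon>0$. The lower bound is immediate from the uniform $C^0$ bound on the vector field on $K$ together with the fact that $\Gamma_\varepsilon$ has diameter bounded below (again via Lemma \ref{LE3.13}): a closed curve of length bounded below, traversed by a velocity field of bounded magnitude, needs time bounded below. For the upper bound, note that $\Gamma_\varepsilon$ encloses the interior equilibrium, which is a uniform repellor (two eigenvalues with real part bounded away from $0$, uniformly in small $\varepsilon$, by the lemma preceding Corollary \ref{CO3.11}); combining this with the fact that $\Gamma_\varepsilon$ stays in the compact region $K$ where the flow is a $C^1$-small perturbation of \eqref{2.1}, whose unique periodic orbit has a finite period and whose interior dynamics have no other recurrence, one gets a uniform upper bound on $T_\varepsilon$ — the simplest route is again contradiction and passage to the limit: if $T_{\varepsilon_n}\to\infty$, the limiting complete orbit $Z_\star$ would have to accumulate on the boundary of $\mathcal A_{0,0}$ for arbitrarily long times near both ends, which is incompatible with $\Gamma_0$ being the unique periodic orbit and the equilibrium being a repellor. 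Once the periods are pinned between two positive constants, the compactness argument of the previous paragraph closes, giving $Z_\star(\R)=\Gamma_0$; and parametrising $Z_n$ and $Z_\star$ so that a fixed transversal section to $\Gamma_0$ is crossed at $t=0$, the convergence $Z_n\to Z_\star$ together with transversality yields $T_{\varepsilon_n}\to T_0$. Since every subsequence has a sub-subsequence along which $T_\varepsilon\to T_0$ and $\delta(\Gamma_\varepsilon,\Gamma_0)\to 0$, the full limits hold, completing the proof. The main obstacle, as indicated, is exactly the uniform boundedness (above and below) of the periods $T_\varepsilon$; everything else is a routine upper-semicontinuity-plus-Poincar\'e--Bendixson argument.
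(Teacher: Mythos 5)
Your strategy---relying on the upper semi-continuity of the interior attractors $\mathcal A_{0,\varepsilon}$ together with an Arzel\`a--Ascoli compactness argument and the Poincar\'e--Bendixson classification of the limit orbit---is genuinely different from the paper's. The paper instead uses Lemma~\ref{LE3.13} to pick a point of $\Gamma_\varepsilon$ with $U>U^\star$, extracts via Arzel\`a--Ascoli a limit complete orbit $Z_\star$ of~\eqref{2.1} which is not reduced to the equilibrium and hence satisfies $Z_\star(t)\to\Gamma_0$ as $t\to\infty$, and then fixes a point $M^0\in\Gamma_0$ where both $F>0$ and $G>0$ and builds a small monotone $\ell^1$-triangle $T$ around $M^0$. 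A Jordan-curve argument inside this triangle forces the periodic curve $\Gamma_{\varepsilon_n}$ to close up after approximately time $T_0$, delivering the Hausdorff convergence and $T_\varepsilon\to T_0$ simultaneously, with no need for any a priori bound on the period.

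Your proposal has a genuine gap at precisely the step you yourself identify as the crux: the uniform upper bound on $T_\varepsilon$. The sentence ``if $T_{\varepsilon_n}\to\infty$, the limiting complete orbit $Z_\star$ would have to accumulate on the boundary of $\mathcal A_{0,0}$ for arbitrarily long times near both ends'' is not an argument: if $Z_\star$ is a heteroclinic inside $\mathcal A_{0,0}$ it accumulates on the \emph{interior equilibrium} as $t\to-\infty$, not on the outer boundary, and turning that into a contradiction requires first proving that $\Gamma_\varepsilon$ avoids a fixed neighbourhood of $(\overline U_2,\overline V_2)$ uniformly in $\varepsilon$---a nontrivial repelling-neighbourhood estimate you never write down. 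Worse, your reason for excluding $Z_\star\equiv(\overline U_2,\overline V_2)$ is circular: the point of $\Gamma_{\varepsilon_n}$ where $U\geq\delta$ occurs at some time $t_n$ which, if $T_{\varepsilon_n}\to\infty$, may leave every compact interval, so locally uniform convergence carries no information about it to $Z_\star$ unless the period is \emph{already} known to be bounded. Finally, even granting $Z_\star=\Gamma_0$, your transversal-section argument for $T_{\varepsilon_n}\to T_0$ presumes that the first Poincar\'e return of $\Gamma_{\varepsilon_n}$ near that section occurs after a time close to $T_0$, which again requires the Hausdorff closeness of the whole orbit to $\Gamma_0$ that you are trying to prove. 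The monotone trapping-triangle/Jordan device in the paper is exactly the concrete topological input that breaks all three of these circles at once; without an equivalent device your outline remains incomplete.
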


\begin{proof}
Fix $U^\star=\frac{1}{2}\left[\overline{U}_2+\frac{\gamma-1}{2}\right)\subset \left(\overline{U}_2,\frac{\gamma-1}{2}\right)$.

\noindent \textit{Step 1:}  From Lemma \ref{LE3.13} for all $\varepsilon>0$ small enough, there exists $t_\varepsilon \in \mathbb{R}$ such that
$$
U_{\varepsilon}(t_\varepsilon)>U^\star.
$$

\noindent \textit{Step 2:}  Using Arzela-Ascoli's theorem we can find a sequence  $\varepsilon_n \to 0$ and $t \to \left( U(t),V(t)\right)$ a complete orbit of the unperturbed system \eqref{2.1} such that
\begin{equation}
\left( U_{\varepsilon_n}(t+t_\varepsilon),V_{\varepsilon_n}(t+t_\varepsilon) \right) \to
\left( U(t),V(t)\right)
\end{equation}
for the topology of the local uniform convergence for $ t \in \mathbb{R}$. The definition of $t_\varepsilon$ above ensures that
\begin{equation}\label{non-deg}
U(0)>U^\star .
\end{equation}
Moreover, since  $(U_\varepsilon (t),V_\varepsilon (t))\in \mathbb T$ and $U_\varepsilon (t)\geq \theta$ and $V_\varepsilon (t)\geq \theta$ for all $\varepsilon$ small enough and $\forall t \in \mathbb{R}$, one obtains that
$$
(U,V)(t)\in \mathbb T,\;\;\forall t \in \mathbb{R} \text{ and }U(t)\geq \theta,\;V(t)\geq \theta,\;\forall t\in \mathbb{R}.
$$
Hence the limit orbit $(U,V)$ lies in the interior attractor $\mathcal{A}_{\mathrm{Int} \left( \mathbb{R}^2_+\right)}$ of \eqref{2.1} while \eqref{non-deg} implies that the complete orbit is not reduced to the interior equilibrium, therefore
$$
\lim_{t \to \infty} \delta \left( \left( U(t), V(t) \right),\Gamma_0 \right)=0.
$$

\noindent \textit{Step 3:} Let us fix $M^0=\left(U^0,V^0 \right) \in \Gamma_0$ such that
$$
F(M^0)>0 \text{ and }G(M^0)>0.
$$
In order to simplify the rest of the proof, we fix the norm $\|\cdot\|_1$ in $\R^2$ given by
$$
\Vert\left(U,V \right) \Vert_1=\vert U\vert+\vert V\vert,\;\forall (U,V)\in\R^2.
$$
Let $\eta>0$ be small enough and let $\varepsilon_0=\varepsilon_0(\eta)>0$ be small enough (depending on $\eta$) such that
$$
F_\varepsilon(M)>F(M^0)/2 \text{ and } G_\varepsilon(M)>G(M^0)/2,
$$
whenever $\Vert M-M^0 \Vert_1 \leq \eta$ and $\varepsilon \in (0,\varepsilon_0)$.

By using the sign of $F$ and $G$ around $M^0$, we can find $M^1=(U^1,V^1)$ a point on $\Gamma_0$ such that
$$
M^0<M^1 (\text{that is}, U^0<U^1\text{ and } V^0<V^1 )\text{ and } \Vert M^1-M^0 \Vert_{1} <  \eta.
$$
Let $\delta \in (0,\eta)$ be such that
$$
\left\lbrace M \in \mathbb{R}^2: \Vert M-M^1 \Vert_1 \leq \delta \right\rbrace \subset \left\lbrace M \in \mathbb{R}^2: M \geq M^0 \text{ and  }  \Vert M-M^0 \Vert_1 \leq \eta\right\rbrace.
$$

\noindent \textit{Step 4:} By using the continuous dependency of the semiflow generated \eqref{3.11} with respect to the initial condition and with respect to the parameter $\varepsilon$ we deduce that we can find $\widehat{\delta}\in (0,\delta)$ and $\varepsilon_1 \in (0, \varepsilon_0)$  such that every solution of  \eqref{3.11} starting in the ball
$$
B(M^1,\widehat{\delta}):= \left\lbrace M \in \mathbb{R}^2: \Vert M-M^1 \Vert_1 \leq \widehat{\delta}\right\rbrace
$$
will belong to the larger ball
$$
B(M^1,\delta):=\left\lbrace M \in \mathbb{R}^2: \Vert M-M^1 \Vert_1 \leq \delta\right\rbrace
$$
at time $t=T_0$.

\noindent \textit{ Step 5:} By using the Step 2, for all $n$ large enough, we find $M_{\varepsilon_n} \in \Gamma_{\varepsilon_n}$ belonging in the ball $B(M^1,\widehat{\delta})$ and the solution of the approximated system \eqref{3.11} starting from $M_{\varepsilon_n} $ belongs to the ball $B(M^1,\delta)$ at $t=T_0$.

Assume by contradiction that this solution leaves the triangle
$$
T=\left\lbrace  M \in \mathbb{R}^2: M\geq M^0 \text{ and }\Vert M-M^0 \Vert_1 \leq \eta \right\rbrace
$$
without intersecting the point  $M_{\varepsilon_n} $. By using Jordan's theorem, we obtain a contradiction since the closed curve $\Gamma_{\varepsilon_n}$ cannot return back through the triangle $T$ from the "exit segment"
$$
S=\left\lbrace  M \in \mathbb{R}^2:  M\geq M^0 \text{ and }\Vert M-M^0 \Vert_1=\eta  \right\rbrace.
$$
This completes the proof of the lemma.
 \end{proof}

We now complete the proof of Theorem \ref{TH3.12} by proving, announced above that for all $\varepsilon>0$ small enough
 \begin{equation*}
\int_0^{T_\varepsilon} \left[\partial_U F_\varepsilon(U_\varepsilon(t),V_\varepsilon(t))+\partial_V G_\varepsilon(U_\varepsilon(t),V_\varepsilon(t))\right]dt<0.
\end{equation*}
However this estimate follows from some properties of the unique periodic orbit $(U_0,V_0)$ of \eqref{2.1} together with the convergence result stated in Lemma \ref{LE3.14}.
Indeed, note that Cheng \cite{Cheng81} proved that, the unique unperturbed periodic orbit $\Gamma_0$ satisfies
  \begin{equation*}
\int_0^{T_0} \left[\partial_U F(U_0(t),V_0(t))+\partial_V G(U_0(t),V_0(t))\right]dt<0,
\end{equation*}
 while Lemma \ref{LE3.14} ensures that
  \begin{equation*}
\lim_{\varepsilon\to 0}\int_0^{T_\varepsilon} \left[\partial_U F_\varepsilon(U_\varepsilon(t),V_\varepsilon(t))+\partial_V G_\varepsilon(U_\varepsilon(t),V_\varepsilon(t))\right]dt=\int_0^{T_0} \left[\partial_U F(U_0(t),V_0(t))+\partial_V G(U_0(t),V_0(t))\right]dt<0.
\end{equation*}
 This completes the proof of the estimate and thus the one of Theorem \ref{TH3.12}.

As a consequence of the above result, we now can state the following properties of the interior attractor $\mathcal{A}_{0,\varepsilon}$ for all $0<\varepsilon\ll 1$.
\begin{theorem}
\label{TH3.13}
Assume that $\gamma \left( \beta -1\right) >\beta +1$. Then for all $\varepsilon>0$ small enough, the interior global attractor $\mathcal{A}_{0,\varepsilon}$ consists of the unique interior equilibrium $\left(\overline U_2,\overline V_2\right)$ and the interior of the unique periodic orbit surrounding the interior equilibrium, and an infinite number of heteroclinic orbits joining the unique interior equilibrium and the unique periodic orbit.
\end{theorem}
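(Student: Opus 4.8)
The plan is to transcribe to \eqref{3.11} the phase-plane analysis already carried out for \eqref{2.1} (see part (ii) of the interior-attractor theorem and the proposition preceding it), which is now legitimate because Theorem \ref{TH3.12} supplies, for all small $\varepsilon>0$, a \emph{unique} periodic orbit $\Gamma_\varepsilon$ of \eqref{3.11}, which is moreover hyperbolic and asymptotically stable (hence two-sidedly stable in the plane), while the preceding lemma shows that for such $\varepsilon$ the interior equilibrium $(\overline U_2,\overline V_2)$ is a spiral source. Together these give a complete census of the invariant structures of \eqref{3.11} inside $\mathbb{T}^0$: the single repelling equilibrium $(\overline U_2,\overline V_2)$, surrounded by the single (stable) periodic orbit $\Gamma_\varepsilon$, which by Corollary \ref{CO3.11} encloses it. Let $\Omega_\varepsilon$ be the open region bounded by the Jordan curve $\Gamma_\varepsilon$, so that $(\overline U_2,\overline V_2)\in\Omega_\varepsilon$ and, by convexity of $\mathbb{T}^0$, $\overline{\Omega_\varepsilon}=\Gamma_\varepsilon\cup\Omega_\varepsilon$ is a compact subset of $\mathbb{T}^0$.

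First I would identify $\mathcal{A}_{0,\varepsilon}$ with $\overline{\Omega_\varepsilon}$ as a set. The inclusion $\overline{\Omega_\varepsilon}\subset\mathcal{A}_{0,\varepsilon}$ is immediate: $\Gamma_\varepsilon$ is an invariant curve, so no orbit crosses it, whence $\overline{\Omega_\varepsilon}$ is a compact invariant subset of $\mathbb{T}^0$ and is therefore contained in the interior global attractor. For the converse, suppose some $X\in\mathcal{A}_{0,\varepsilon}$ lay outside $\overline{\Omega_\varepsilon}$; by Lemma \ref{LE3.6} and Proposition \ref{PROP3.7} its complete orbit stays in a compact subset of $\mathbb{T}^0$ bounded away from $\partial\mathbb{T}^0$, and by the Jordan curve theorem it remains in the annular region between $\Gamma_\varepsilon$ and $\partial\mathbb{T}^0$. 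That region carries no equilibrium of \eqref{3.11}, so the Poincar\'e--Bendixson theorem forces the $\alpha$-limit set of $X$ to be a periodic orbit, necessarily $\Gamma_\varepsilon$; but $\Gamma_\varepsilon$ is asymptotically stable and hence cannot be the $\alpha$-limit set of any orbit different from itself -- a contradiction. Therefore $\mathcal{A}_{0,\varepsilon}=\Gamma_\varepsilon\cup\Omega_\varepsilon$.

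It then remains to unravel the dynamics inside $\Omega_\varepsilon$. Fix $X\in\Omega_\varepsilon\setminus\{(\overline U_2,\overline V_2)\}$; both its forward and backward orbits remain in the compact invariant set $\overline{\Omega_\varepsilon}$, so their $\omega$- and $\alpha$-limit sets are described by the Poincar\'e--Bendixson theorem. Since the only equilibrium in $\overline{\Omega_\varepsilon}$ is the spiral source $(\overline U_2,\overline V_2)$ -- which, being repelling, is neither the $\omega$-limit set of a non-stationary orbit nor the base of a homoclinic loop -- and the only periodic orbit is the two-sidedly stable $\Gamma_\varepsilon$ -- which cannot be an $\alpha$-limit set of any orbit but itself -- one concludes $\omega(X)=\Gamma_\varepsilon$ and $\alpha(X)=\{(\overline U_2,\overline V_2)\}$. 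Thus $X$ lies on a heteroclinic orbit joining the interior equilibrium to $\Gamma_\varepsilon$, and because $\Omega_\varepsilon\setminus\{(\overline U_2,\overline V_2)\}$ is two-dimensional while each such orbit is a one-dimensional curve, there are infinitely many of them. Assembling these facts gives the stated description of $\mathcal{A}_{0,\varepsilon}$, the smallness of $\varepsilon$ required being simply the smallest of those appearing in Theorem \ref{TH3.12}, Corollary \ref{CO3.11}, and the lemma on the spiral source.

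The delicate point is the second half of the set identification, namely excluding that $\mathcal{A}_{0,\varepsilon}$ extends past $\Gamma_\varepsilon$: this rests on combining the uniform persistence of Lemma \ref{LE3.6} -- which confines complete attractor orbits to a compact piece of $\mathbb{T}^0$ away from the boundary, where no competing invariant set lives -- with the two-sided asymptotic stability of $\Gamma_\varepsilon$, itself a consequence of the sign of $\int_0^{T_\varepsilon}[\partial_U F_\varepsilon+\partial_V G_\varepsilon]\,dt$ established in the proof of Theorem \ref{TH3.12}. The rest is a routine repetition of the planar arguments already used for \eqref{2.1}.
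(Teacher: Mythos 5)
The paper states Theorem~\ref{TH3.13} without proof, presenting it as an immediate consequence of Theorem~\ref{TH3.12} and the preceding lemmas; your argument supplies precisely the Poincar\'e--Bendixson bookkeeping that the authors leave implicit, and it is correct. The key ingredients you invoke — compact invariance of the disk $\overline{\Omega_\varepsilon}$ bounded by $\Gamma_\varepsilon$, uniqueness and two-sided orbital asymptotic stability of $\Gamma_\varepsilon$ from the sign of $\int_0^{T_\varepsilon}[\partial_U F_\varepsilon+\partial_V G_\varepsilon]\,dt$ in Theorem~\ref{TH3.12}, the spiral-source character of $(\overline U_2,\overline V_2)$, and the confinement of complete attractor orbits away from $\partial\mathbb{T}^0$ via Lemma~\ref{LE3.6} and Proposition~\ref{PROP3.7} — are exactly what is needed, and the phase-plane analysis matches the unperturbed case treated (also without written proof) in Section 2.

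One small remark: when you exclude that $\mathcal{A}_{0,\varepsilon}$ contains points outside $\overline{\Omega_\varepsilon}$, you could equivalently observe that $\overline{\Omega_\varepsilon}$ is itself a compact set that attracts every compact subset of $\mathbb{T}^0$ (combining the interior source, the stable cycle, and Poincar\'e--Bendixson in the outer annulus), whence by minimality of the global attractor $\mathcal{A}_{0,\varepsilon}\subset\overline{\Omega_\varepsilon}$; this sidesteps the slight subtlety of arguing about $\alpha$-limit sets of individual attractor points, but your version is also sound.
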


\subsection{Existence and uniqueness of a traveling wave joining $(\gamma ,0)$ and the interior global attractor}

In this section, we use the previous results to provide a description of the heteroclinic orbits for \eqref{3.11} as well as their uniqueness.

\begin{lemma}
\label{LE3.14bis}
Assume that $\gamma \left( \beta -1\right) >1$. Then, for all $\varepsilon>0$ small enough the equilibria $\left(0,0\right)$ and $\left(\gamma,0\right)$ are saddle points for the
semiflow $ T_\varepsilon$. More
precisely, the linearized equation of system \eqref{3.11} around the equilibrium $\left(0,0\right)$ (or $\left(\gamma,0\right)$) has one eigenvalue with positive real part and one with negative real part.
\end{lemma}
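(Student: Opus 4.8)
The plan is to run the same small-perturbation argument used above for the interior equilibrium. Recall first that, by Theorem \ref{TH3.2}, one has $\Vert \Phi_\varepsilon\Vert_{\infty}\to 0$ and $\Vert \Phi_\varepsilon\Vert_{\mathrm{Lip}}\to 0$ as $\varepsilon\to 0$; since $\Vert D\Phi_\varepsilon\Vert_{\infty}\le \Vert\Phi_\varepsilon\Vert_{\mathrm{Lip}}$, it follows that $(F_\varepsilon,G_\varepsilon)=(F,G)-(\Phi^1_\varepsilon,\Phi^2_\varepsilon)\to (F,G)$ in $C^1(\mathbb{T})$ as $\varepsilon\to 0$, a fact already used repeatedly in the previous lemmas. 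Moreover, as observed in the text, $(0,0)$ and $(\gamma,0)$ remain equilibria of \eqref{3.11} for every $\varepsilon\in[0,\varepsilon_1]$. Hence, denoting by $J_\varepsilon^{(0)}$ and $J_\varepsilon^{(\gamma)}$ the Jacobian matrices of the reduced vector field $(F_\varepsilon,G_\varepsilon)$ at $(0,0)$ and $(\gamma,0)$ respectively, the $C^1$ convergence gives $J_\varepsilon^{(0)}=J^{(0)}+o(1)$ and $J_\varepsilon^{(\gamma)}=J^{(\gamma)}+o(1)$ as $\varepsilon\to 0$, where $J^{(0)}$ and $J^{(\gamma)}$ denote the Jacobian matrices of \eqref{2.1} at these same points.

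Next I would compute these two unperturbed Jacobians explicitly from the expressions of $F$ and $G$. At $(0,0)$ one finds the diagonal matrix $J^{(0)}=\mathrm{diag}(\alpha\gamma,-1)$, whose determinant is $-\alpha\gamma<0$. At $(\gamma,0)$ one finds the upper triangular matrix with diagonal entries $-\alpha\gamma$ and $\dfrac{\gamma(\beta-1)-1}{1+\gamma}$ (consistently with the eigenvalues $\lambda_1=-\alpha\gamma$, $\lambda_2=\frac{\beta\gamma}{1+\gamma}-1$ already recorded in the proof of the global attractor theorem of Section~2), so that $\det J^{(\gamma)}=-\alpha\gamma\cdot\dfrac{\gamma(\beta-1)-1}{1+\gamma}$, which is strictly negative precisely because Assumption \ref{ASS2.1} (that is $\gamma(\beta-1)>1$) holds. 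In particular both $J^{(0)}$ and $J^{(\gamma)}$ have strictly negative determinant.

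Since the determinant depends continuously on the matrix, for $\varepsilon>0$ small enough one still has $\det J_\varepsilon^{(0)}<0$ and $\det J_\varepsilon^{(\gamma)}<0$. A real $2\times 2$ matrix with negative determinant has two distinct real eigenvalues of opposite sign (its discriminant $(\mathrm{tr})^2-4\det$ being strictly positive), hence each of $(0,0)$ and $(\gamma,0)$ is a hyperbolic equilibrium of \eqref{3.11} possessing exactly one eigenvalue with positive real part and one with negative real part, i.e.\ a saddle point, which is the claim. There is essentially no serious obstacle here: the only two points requiring a word of care are the passage to $C^1$ (rather than merely $C^0$) convergence $(F_\varepsilon,G_\varepsilon)\to(F,G)$, needed in order to take limits of the Jacobians, and the use of Assumption \ref{ASS2.1} to fix the sign of $\det J^{(\gamma)}$; note that, as a by-product, the eigenvalues are in fact real, which is slightly stronger than the stated conclusion.
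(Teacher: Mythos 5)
Your proof is correct and follows essentially the same route as the paper's: exploit the $C^1$-closeness of $(F_\varepsilon,G_\varepsilon)$ to $(F,G)$ to write $J_\varepsilon=J+o(1)$, compute the unperturbed Jacobians at the two boundary equilibria, and conclude by a perturbation argument. Your variant of finishing via continuity of the determinant (rather than directly via continuity of simple eigenvalues as the paper does at $(0,0)$) is a slightly more elementary but equivalent conclusion, and, as you note, it yields the marginally stronger fact that the perturbed eigenvalues remain real.
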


\begin{proof}
Let us denote by $J_\varepsilon$ the Jacobian matrix associated to %
\eqref{3.11} at $\left(0,0\right)$. Since $(F_\varepsilon,G_%
\varepsilon)$ is $C^1(\mathbb{T})-$close to $(F,G)$ as $\varepsilon\to 0$,
one has
\begin{equation*}
J_\varepsilon=J+o(1)\text{ as }\varepsilon\to 0,
\end{equation*}
where $J$ denotes the Jacobian matrix at $\left(0,0\right)$ of %
\eqref{3.11} with $\varepsilon=0$ (that corresponds to system \eqref{2.1}).
It is easy to check that the eigenvalues of $J$ are the following: $\lambda_{+,J}=\alpha \gamma>0$ and $\lambda_{-,J}=-1<0$. The eigenvalues $\lambda_{\pm,J_\varepsilon}$ of $J_\varepsilon$ are continuous with respect to $\varepsilon$.
Hence $\lambda_{\pm,J_\varepsilon}=\lambda_{\pm,J}+o(1)$. This completes the
proof of the result.
\end{proof}

\begin{proposition}
\label{PROP3.15} Assume that $1<\gamma \left( \beta -1\right)$. Then system \eqref{3.11} admits a unique heteroclinic orbit going from $(0,0)$ to $(\gamma ,0)$, for all $0<\varepsilon\ll 1$ small enough.
\end{proposition}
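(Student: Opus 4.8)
The plan is to reduce everything to the \emph{invariant} boundary line
$\partial_u\mathbb T=\{(U,V)\in\mathbb T:\ V=0\}$. Recall that $\partial_u\mathbb T$ is invariant under the semiflow $T_\varepsilon$ generated by \eqref{3.11} (this is the content of the claim \eqref{3.9}: above $\{V=0,\ U\ge 0\}$ one has $\Phi^2_\varepsilon\equiv 0$, and $G(U,0)=0$, hence $V'=G_\varepsilon(U,0)=0$ there). Consequently any orbit contained in $\partial_u\mathbb T$ is governed by the scalar equation $U'=F_\varepsilon(U,0)$, and the desired heteroclinic orbit will be found, and shown to be unique, inside this one–dimensional flow.

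For \textbf{existence} I would first record that $(0,0)$ and $(\gamma,0)$ remain equilibria of \eqref{3.11} for every $\varepsilon\in[0,\varepsilon_0]$ (as already observed, \eqref{3.11} has the same equilibria as \eqref{2.1}), so $F_\varepsilon(0,0)=F_\varepsilon(\gamma,0)=0$. Next, using $(F_\varepsilon,G_\varepsilon)\to(F,G)$ in $C^1(\mathbb T)$ together with $F(U,0)=\alpha U(\gamma-U)$, I would show there is $\varepsilon_\star>0$ with $F_\varepsilon(U,0)>0$ for all $U\in(0,\gamma)$ and all $\varepsilon\in(0,\varepsilon_\star]$: on a compact middle interval $[\eta,\gamma-\eta]$ one has $F\ge c_\eta>0$, hence $F_\varepsilon>0$ for $\varepsilon$ small; near $U=0$ and $U=\gamma$ one instead uses the uniform hyperbolicity \eqref{3.13}, namely $\partial_U F_\varepsilon(0,0)>\tfrac12\alpha\gamma>0$ and $\partial_U F_\varepsilon(\gamma,0)<-\tfrac12\alpha\gamma<0$, which forbids extra zeros of $U\mapsto F_\varepsilon(U,0)$ in a uniform neighbourhood of the endpoints. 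Then $\{(U,0):\ U\in(0,\gamma)\}$ is a monotone bounded orbit of \eqref{3.11} with $\alpha$-limit $(0,0)$ and $\omega$-limit $(\gamma,0)$, giving the heteroclinic.

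For \textbf{uniqueness} I would argue that \emph{every} heteroclinic orbit issuing from $(0,0)$ lies in $\partial_u\mathbb T$. By Lemma \ref{LE3.14bis}, $(0,0)$ is hyperbolic for \eqref{3.11} with a one–dimensional unstable manifold. Since $\partial_u\mathbb T$ is invariant and contains $(0,0)$, the vector $(1,0)$ is an eigenvector of the Jacobian $J_\varepsilon$ of \eqref{3.11} at $(0,0)$, with eigenvalue $\partial_U F_\varepsilon(0,0)>0$ by \eqref{3.13}; as $J_\varepsilon$ has exactly one eigenvalue of positive real part, $(1,0)$ spans the unstable eigenspace, so $W^u_{\mathrm{loc}}(0,0)$ coincides near $(0,0)$ with the invariant line $\{V=0\}$, whence $W^u(0,0)\subset\{V=0\}$. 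Any orbit with $\alpha$-limit $(0,0)$ therefore lies in the scalar phase line $U'=F_\varepsilon(U,0)$; by the sign analysis above the only branch whose forward orbit reaches $(\gamma,0)$ is $\{(U,0):\ 0<U<\gamma\}$ (for $U<0$ small one has $F_\varepsilon(U,0)<0$, so that branch moves away from $\gamma$, and $F_\varepsilon>0$ on $(0,\gamma)$ with $F_\varepsilon(\gamma,0)=0$ prevents overshoot). This gives uniqueness, and also shows the heteroclinic lies in $[0,\gamma]\times\{0\}=\mathcal A_{\partial,\varepsilon}$.

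The main obstacle is the behaviour near the degenerate endpoints $U=0,\gamma$: $C^1$-closeness of $F_\varepsilon$ to $F$ does not by itself exclude spurious zeros of $U\mapsto F_\varepsilon(U,0)$ near where $F$ is small, so one genuinely needs the uniform (in $\varepsilon$) hyperbolicity recorded in \eqref{3.13}. Once that is in place, the invariance of $\{V=0\}$, the monotone one–dimensional phase line, and the identification of the unstable eigenspace of $J_\varepsilon$ at $(0,0)$ are all routine.
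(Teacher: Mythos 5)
Your proposal takes essentially the same route as the paper: reduce to the positively invariant boundary line $\partial_u\mathbb{T}=\{V=0\}$, where the flow is scalar and $\varepsilon$-close to $U'=\alpha U(\gamma-U)$, and conclude existence and uniqueness from the one-dimensional phase portrait. The paper's own proof is very terse (it just invokes $\|\Phi_\varepsilon\|_\infty\to 0$), and you have usefully filled in the two points it leaves implicit — the sign analysis of $F_\varepsilon(\cdot,0)$ near the endpoints via the uniform hyperbolicity \eqref{3.13}, and the identification of $(1,0)$ as the unstable eigendirection of $J_\varepsilon$ at $(0,0)$, which forces every heteroclinic emanating from $(0,0)$ into $\{V=0\}$.
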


\begin{proof}
Since $\partial_u\mathbb{T}$ is positively invariant with respect to the system \eqref{3.11} and
\begin{equation*}
F(U,V) =\alpha U(\gamma -U)-\dfrac{UV}{1+U}=\frac{U}{1+U}[f(U)-V],
\end{equation*}
by using the following fact
\begin{equation*}
\lim_{\varepsilon \to 0 }\Vert \Phi_\varepsilon \Vert_{\infty}=0,
\end{equation*}
we can deduce that there exists a unique heteroclinic orbit of system \eqref{3.11} going from $(0,0)$ to $(\gamma ,0)$.
\end{proof}

\vspace{1ex}

We now discuss the existence and uniqueness of heteroclinc orbits for \eqref{3.11} joining the boundary to the interior attractor. As in the previous section, we make use of the connectedness of the global attractor to derive the existence of such connections. We then discuss further properties.

\noindent \textbf{Connectedness arguments:} The largest global attractor $\mathcal{A}_{\varepsilon}$  is connected. Since any continuous map maps a connected set into a connected set, it follows that the projection of $\mathcal{A}_{\varepsilon}$  on the horizontal and vertical axis is a compact interval.

The global attractor $\mathcal{A}_{\varepsilon}$ contains the interior global attractor $\mathcal{A}_{0,\varepsilon}$ which is compacts connected and locally stable and also contains the boundary attractor $\mathcal{A}_{\partial, \varepsilon}$. The connectedness of $\mathcal{A}_{\varepsilon}$ and compactness  of $\mathcal{A}_{0,\varepsilon}$ and $\mathcal{A}_{\partial,\varepsilon}$ imply
$$
\mathcal{A}_{\varepsilon}  - \mathcal{A}_{0,\varepsilon} \bigcup  \mathcal{A}_{\partial, \varepsilon} \neq \emptyset.
$$
Moreover by using Proposition 3.7, we deduce that for each point $(U,V) \in  \mathcal{A}_{\varepsilon}  - \mathcal{A}_{0,\varepsilon} \bigcup  \mathcal{A}_{\partial, \varepsilon}$
the $\alpha$ and $\omega$ limit sets satisfy the following
$$
\alpha(U,V) \in \mathcal{A}_{\partial, \varepsilon} \text{ and } \omega(U,V) \in \mathcal{A}_{0,\varepsilon}.
$$
Finally since the boundary attractor has a Morse decomposition $M_1=\left\{ (0,0) \right \}$ and $M_2=\left\{ (\gamma,0) \right \}$ we have either
$$
\alpha(U,V) =M_1 \text{ or } \alpha(U,V) =M_2, \forall (U,V) \in  \mathcal{A}_{\varepsilon}  - \mathcal{A}_{0,\varepsilon} \bigcup  \mathcal{A}_{\partial, \varepsilon}.
$$

\begin{proposition}
\label{PROP3.16} Assume that $1<\gamma \left( \beta -1\right)$.
There for all $\varepsilon$ small enough, system \eqref{3.11} does not admit any heteroclinic orbit going from $(0,0)$ to the interior global attractor.
\end{proposition}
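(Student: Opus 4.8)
The plan is to argue by contradiction, mimicking the argument already used in Section~2 to rule out, for the unperturbed system \eqref{2.1}, a heteroclinic orbit leaving $(0,0)$, while controlling the center‑manifold perturbation $\Phi_\varepsilon$ near $(0,0)$.

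Suppose that, for some arbitrarily small $\varepsilon>0$, there is a complete orbit $t\mapsto(U(t),V(t))$ of \eqref{3.11} with $\lim_{t\to-\infty}(U(t),V(t))=(0,0)$ and whose $\omega$-limit set is contained in the interior global attractor $\mathcal A_{0,\varepsilon}$. First I would check that this orbit lies entirely in $\mathbb T^0$. Indeed $\mathcal A_{0,\varepsilon}\subset\mathbb T^0$ while $\partial\mathbb T^0=\partial_u\mathbb T\cup\partial_v\mathbb T$ is closed and positively invariant by \eqref{3.12}; hence, if the orbit met $\partial\mathbb T^0$ at some time, its whole forward trajectory would stay in $\partial\mathbb T^0$, forcing the (nonempty) $\omega$-limit set into $\partial\mathbb T^0$, which is disjoint from $\mathbb T^0\supset\mathcal A_{0,\varepsilon}$, a contradiction. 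In particular $V(t)>0$ for every $t\in\R$, and, being contained in the global attractor $\mathcal A_\varepsilon\subset\mathbb T$, the orbit is bounded.

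Next I would rewrite the $V$-equation of \eqref{3.11} along this orbit in the scalar linear form $V'(t)=c_\varepsilon(t)\,V(t)$. Since $G(U,V)=V\bigl(\tfrac{\beta U}{1+U}-1\bigr)$ and, by the claim \eqref{3.9}, $\Phi^2_\varepsilon(U,0)=0$ for all $U\ge 0$, the mean value theorem gives $|\Phi^2_\varepsilon(U,V)|\le\|\Phi_\varepsilon\|_{\mathrm{Lip}}\,|V|$ on $\mathbb T$, so that $G_\varepsilon(U,V)=-\Phi^2_\varepsilon(U,V)+G(U,V)$ can be divided by $V>0$ and
\begin{equation*}
c_\varepsilon(t)=\frac{\beta U(t)}{1+U(t)}-1-\frac{\Phi^2_\varepsilon(U(t),V(t))}{V(t)},\qquad c_\varepsilon(t)\le \frac{\beta U(t)}{1+U(t)}-1+\|\Phi_\varepsilon\|_{\mathrm{Lip}}.
\end{equation*}
Because $\|\Phi_\varepsilon\|_{\mathrm{Lip}}\to 0$ as $\varepsilon\to 0$ (Theorem \ref{TH3.2}) and $\tfrac{\beta U}{1+U}\to 0$ as $U\to 0^+$, I would fix $\varepsilon$ small, and then $T\in\R$ (using $U(t)\to 0$ as $t\to-\infty$), so that $c_\varepsilon(t)\le-\tfrac12$ for all $t\le T$. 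Integrating $\tfrac{d}{dt}\ln V(t)=c_\varepsilon(t)\le-\tfrac12$ backward from $T$ yields $V(t)\ge V(T)\,e^{(T-t)/2}$ for all $t\le T$, hence $V(t)\to+\infty$ as $t\to-\infty$. This contradicts both $\lim_{t\to-\infty}V(t)=0$ and the boundedness of the orbit, and proves the proposition.

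I expect the only genuinely delicate point to be that the perturbation $\Phi_\varepsilon$ should not spoil the linear exponential decay of the $V$-component near $(0,0)$ responsible for the contradiction; this is precisely why the structural identity $\Phi^2_\varepsilon(\cdot,0)\equiv 0$ — a consequence of the invariance of $\partial_u\mathbb T$ recorded in \eqref{3.9} — is invoked, since it allows one to factor one power of $V$ out of $\Phi^2_\varepsilon$ and absorb the remainder into the scalar coefficient $c_\varepsilon$. Everything else is the same repulsiveness computation already carried out for the unperturbed model.
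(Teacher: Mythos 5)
Your proposal is correct and follows the same high‑level contradiction strategy as the paper: exploit the $V$-equation near $(0,0)$ to show $V(t)\to+\infty$ as $t\to-\infty$, contradicting boundedness of the orbit inside the global attractor. The difference is in how the perturbation $\Phi^2_\varepsilon$ is controlled. The paper applies the variation-of-constants formula, treating $-\Phi^2_\varepsilon$ as an inhomogeneity and invoking the smallness of $U$ for very negative times; as written it does not address why the inhomogeneous contribution (which, a priori, is bounded only by $\|\Phi_\varepsilon\|_\infty$ and is weighted by the same exponential as the homogeneous term) cannot cancel the growth of the homogeneous part. You fix this cleanly by invoking the structural identity $\Phi^2_\varepsilon(U,0)\equiv 0$ from \eqref{3.9} together with the Lipschitz bound of Theorem \ref{TH3.2}, which gives $|\Phi^2_\varepsilon(U,V)|\le \|\Phi_\varepsilon\|_{\mathrm{Lip}}\,V$ and lets you absorb the perturbation into a scalar coefficient $c_\varepsilon(t)$ with $c_\varepsilon(t)\le -\tfrac12$ for $t\ll 0$. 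This turns the $V$-equation into a genuine linear scalar ODE $V'=c_\varepsilon(t)V$ with uniformly negative rate backward in time, from which the blow-up is immediate. Your version is thus a more careful — and, in my view, more airtight — rendering of the same repulsiveness argument; it also makes explicit a fact the paper appears to use tacitly (the vanishing of $\Phi^2_\varepsilon$ on $V=0$, which is exactly the invariance of $\partial_u\mathbb T$). The preliminary step showing the orbit lies in $\mathbb T^0$ (hence $V>0$ and division by $V$ is legitimate) is also a welcome addition that the paper omits.
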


\begin{proof}
Assume by contradiction that there exists one. Note that
$$
 V^{\prime }  =  -\Phi^2_\varepsilon(U,V)+\left( -1+ \dfrac{\beta U}{1+U} \right)V.%
$$
We deduce that
$$
V(t)=\exp \left( \int_{t_0}^t  -1+ \dfrac{\beta U(s)}{1+U(s)}  ds \right) \left( V(t_0)+\int_{t_0}^t -\Phi^2_\varepsilon(U(t),V(t)) \exp \left( \int_{t_0}^s  1- \dfrac{\beta U(t)}{1+U(t)}  dt \right) ds\right).
$$
Since there exists $T<0$ such that $U(t)$ remains sufficiently small for all negative times $t<T$ and $V(t_0)>0$, we deduce that
$$
\lim_{t \to -\infty}V(t)=+\infty
$$
which contradicts the fact that the solution belongs to the global attractor and is therefore bounded.
\end{proof}

We complete this section by proving the uniqueness of the traveling wave solution connecting $(\gamma,0)$ to the interior global attractor. The arguments of this proof extend those used in \cite{Ducrot-Langlais-Magal}.

\begin{proposition}
\label{PROP3.17} Assume that $1<\gamma \left( \beta -1\right)$. Then for all $\varepsilon>0$ small enough, system \eqref{3.11} admits a unique heteroclinic orbit going
from $(\gamma ,0)$ to the interior global attractor.
\end{proposition}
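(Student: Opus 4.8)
The plan is to follow the three-step strategy of the Section~2 ``Global attractor'' theorem, now transferred to the reduced planar system \eqref{3.11}: derive existence of a connection from the boundary attractor from the connectedness of $\mathcal{A}_\varepsilon$, exclude connections emanating from $(0,0)$ (already settled in Proposition~\ref{PROP3.16}), and then prove uniqueness by an unstable-manifold argument at the saddle $(\gamma,0)$, extending the method of \cite{Ducrot-Langlais-Magal}.

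The existence part is essentially contained in the ``Connectedness arguments'' paragraph preceding the statement. Since $\mathcal{A}_\varepsilon$ is connected while $\mathcal{A}_{0,\varepsilon}$ and $\mathcal{A}_{\partial,\varepsilon}=[0,\gamma]\times\{0\}$ are disjoint compact invariant sets, the set $\mathcal{A}_\varepsilon\setminus\bigl(\mathcal{A}_{0,\varepsilon}\cup\mathcal{A}_{\partial,\varepsilon}\bigr)$ is nonempty, and by Theorem~3.2 in \cite{Hale-Waltman} together with Proposition~\ref{PROP3.7}, every point $(U,V)$ in it satisfies $\omega(U,V)\subset\mathcal{A}_{0,\varepsilon}$ and, thanks to the acyclic Morse decomposition of $\mathcal{A}_{\partial,\varepsilon}$ into $\{(0,0)\}$ and $\{(\gamma,0)\}$, either $\alpha(U,V)=\{(0,0)\}$ or $\alpha(U,V)=\{(\gamma,0)\}$. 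Proposition~\ref{PROP3.16} rules out the former, so such an orbit is a heteroclinic orbit from $(\gamma,0)$ to the interior global attractor.

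For uniqueness I would exploit the hyperbolic saddle structure at $(\gamma,0)$. By Lemma~\ref{LE3.14bis}, for all small $\varepsilon$ the linearization of \eqref{3.11} at $(\gamma,0)$ has a one-dimensional unstable eigenspace $E^u_\varepsilon$ that is $O(\varepsilon)$-close to the line $E_{\lambda_2}$ of Section~2 (of strictly negative slope in the $(U,V)$-plane) and a stable eigenspace close to $\{V=0\}$. Accordingly there are $\varrho>0$ and a $C^1$ local unstable manifold $M^u_\varepsilon$, a graph over a neighbourhood of $0$ in $E^u_\varepsilon$ and tangent to $E^u_\varepsilon$ at $(\gamma,0)$, which contains every negative semi-orbit of \eqref{3.11} remaining in $B((\gamma,0),\varrho)$ for all negative times; since $E^u_\varepsilon$ has negative slope, $M^u_\varepsilon$ splits into a $\{V>0\}$ branch and a $\{V<0\}$ branch, the latter immediately leaving $\mathbb{T}\subset\{V\geq 0\}$. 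Given two heteroclinic orbits $O_1,O_2$ from $(\gamma,0)$ to $\mathcal{A}_{0,\varepsilon}$: since $\mathbb{T}^0$ and $\partial\mathbb{T}^0$ are positively invariant and $\mathcal{A}_{0,\varepsilon}\subset\mathbb{T}^0$, both orbits have $V>0$ at all finite times, hence for all sufficiently negative $t$ they lie on the $\{V>0\}$ branch of $M^u_\varepsilon$; picking $t_1,t_2$ with $\Pi^u_\varepsilon\bigl(O_1(t_1)\bigr)=\Pi^u_\varepsilon\bigl(O_2(t_2)\bigr)$ in the graph domain (possible since the spectral projection onto $E^u_\varepsilon$ moves monotonically away from $0$ along that branch) and applying the graph map yields $O_1(t_1)=O_2(t_2)$, so $O_1=O_2$ by uniqueness of solutions of \eqref{3.11}.

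The main obstacle I anticipate is the careful, $\varepsilon$-uniform justification that any orbit with $\alpha$-limit $(\gamma,0)$ lies on the one-dimensional local unstable manifold --- and on its $\{V>0\}$ branch --- for all sufficiently negative times; this is exactly where the hyperbolicity supplied by Lemma~\ref{LE3.14bis} and the $C^1$-closeness of $(F_\varepsilon,G_\varepsilon)$ to $(F,G)$, which controls the spectral data at $(\gamma,0)$ uniformly in $\varepsilon$, are needed. The remaining steps --- existence via connectedness plus Proposition~\ref{PROP3.16}, and the projection computation --- are routine.
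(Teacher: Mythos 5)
Your proposal mirrors the paper's argument exactly: existence via the connectedness of $\mathcal{A}_\varepsilon$ combined with the Morse decomposition of the boundary attractor and Proposition~\ref{PROP3.16}, and uniqueness via the one-dimensional local (center-)unstable manifold at the saddle $(\gamma,0)$ furnished by Lemma~\ref{LE3.14bis}, just as in the paper's proof which invokes this lemma and then refers back to the identical manifold argument from the Section~2 global attractor theorem. The extra care you take in singling out the $\{V>0\}$ branch and in noting the $\varepsilon$-uniform persistence of the hyperbolic structure via the $C^1$-convergence $(F_\varepsilon,G_\varepsilon)\to(F,G)$ is exactly what makes that referenced Section~2 argument transfer, and is implicitly used by the authors.
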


\begin{proof}
We only need to prove the uniqueness. From Lemma 3.16, it follows that the center-unstable manifold at $(\gamma ,0)$ is a one dimensional locally invariant manifold. By using the same arguments as in section 2 for the uniqueness of the heteroclinic orbit starting from $(\gamma,0)$ for system (2.1), we can prove the uniqueness of the heteroclinic orbit going from $(\gamma ,0)$ to the interior global attractor for system \eqref{3.11}.
\end{proof}

\section{Numerical simulations}
In this section we intend to observe the previous results numerically. We run some numerical simulations for the system
\begin{equation}
\left\lbrace
\begin{array}{ll}
U_{t} &=dU_{xx}+\alpha U(\gamma -U)-\dfrac{UV}{1+U}, \text{ for } x \in [0,1000] \\
V_{t} &=V_{xx}-V+\beta \dfrac{UV}{1+U},  \text{ for } x \in [0,1000]
\end{array}
\right.
\end{equation}
with Neumann boundary conditions
$$
U_x(t,x)=V_x(t,x)=0, \text{ for } x=0 \text{ and } x=1000,
$$
and the initial values
$$
U(0,x)=\gamma \text{ and } V(0,x)=0.1*\exp\left(-\delta x \right).
$$
Throughout the simulations the parameters will be unchanged and fixed as follows
$$
d=1,\,\alpha=1/4, \, \gamma=4, \, \beta=2.
$$

In Figure \ref{fig2}, we observe the traveling wave joining $(\gamma,0)$ and periodic wave train when we start from a $V(0,x)$ with $\delta=0.1$.

\begin{figure}[H]
		\centering
		\includegraphics[width=2.8in,height=2in]{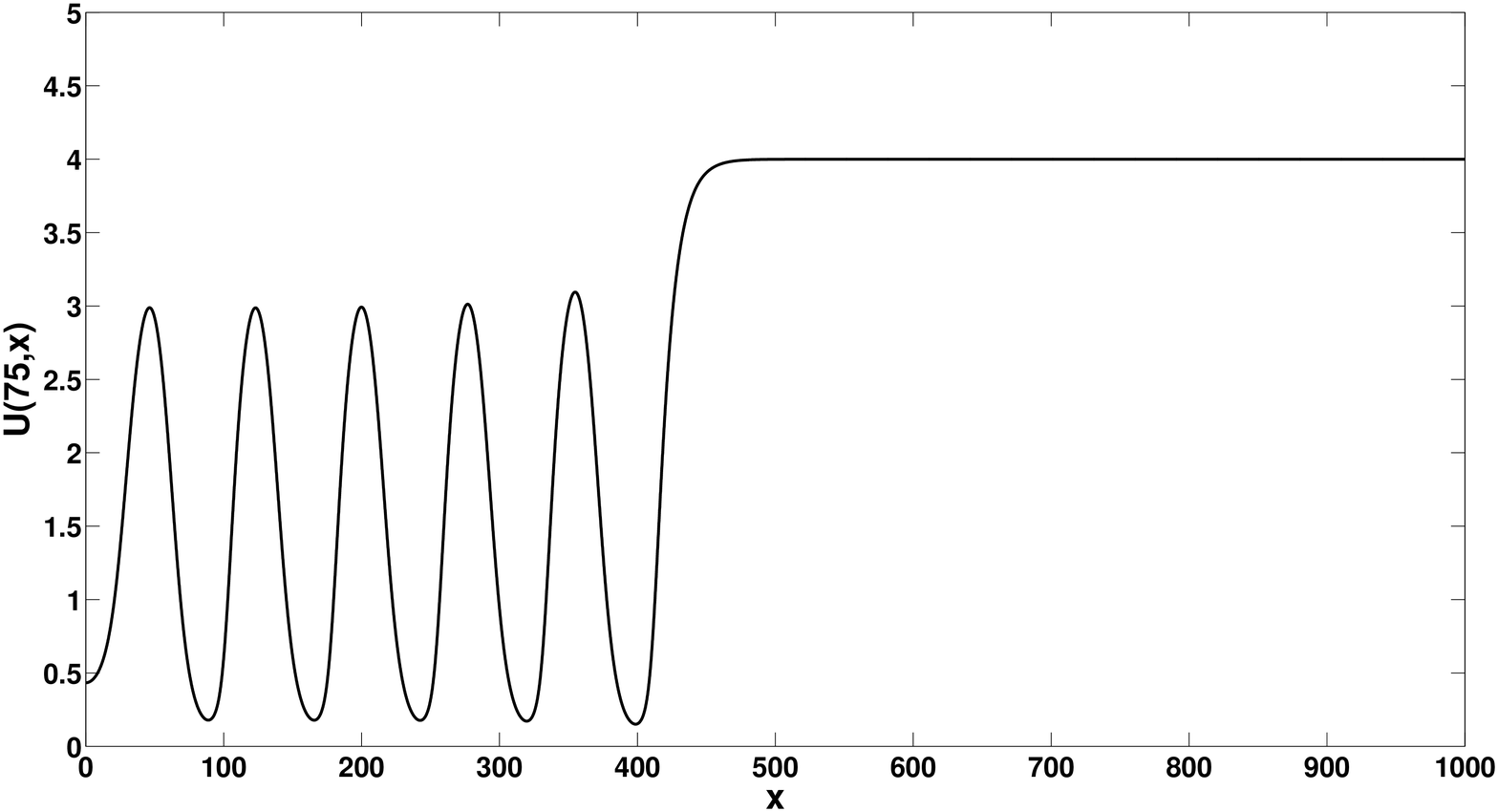}
		\includegraphics[width=2.8in,height=2in]{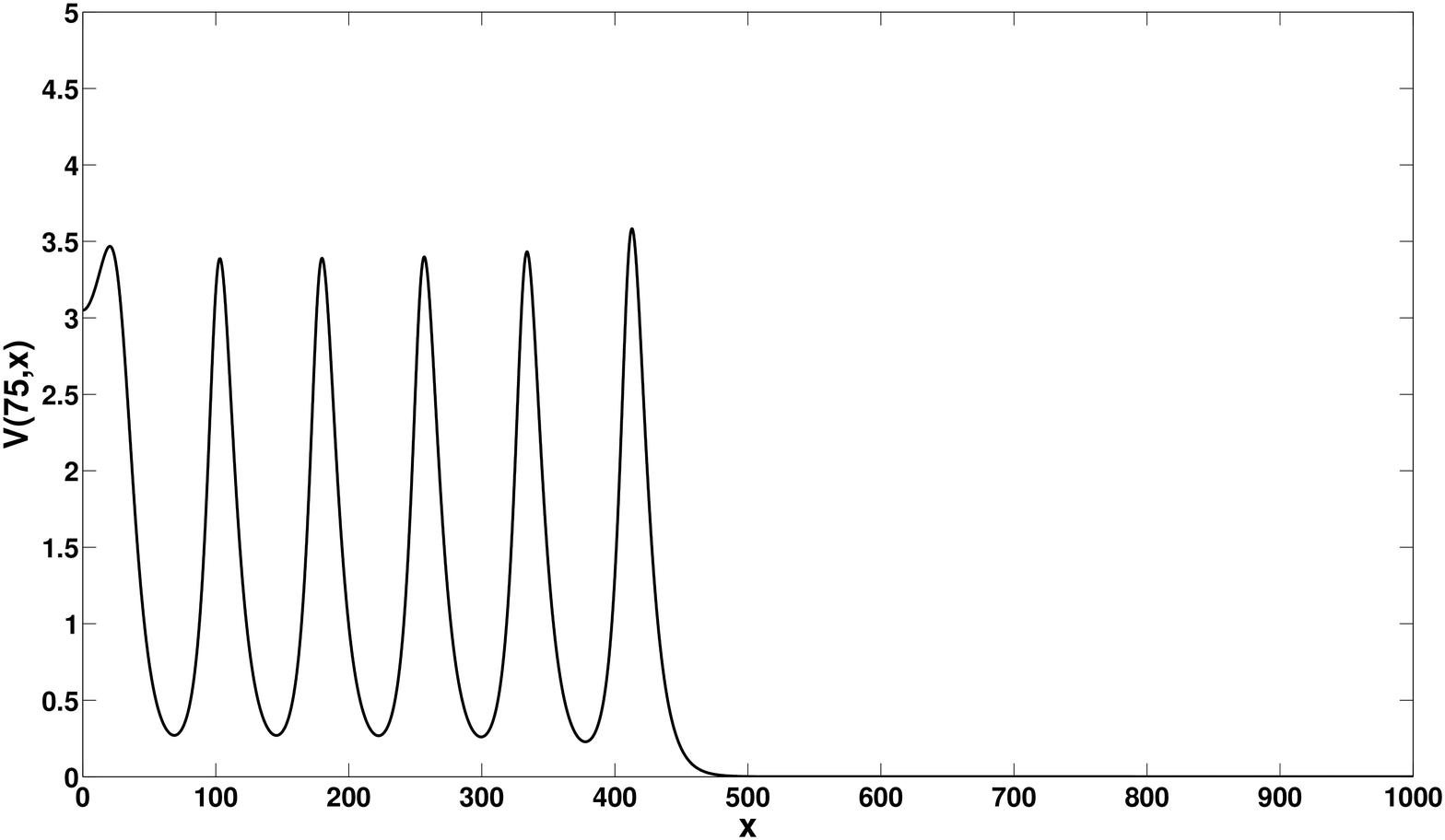}
		\includegraphics[width=2.8in,height=2in]{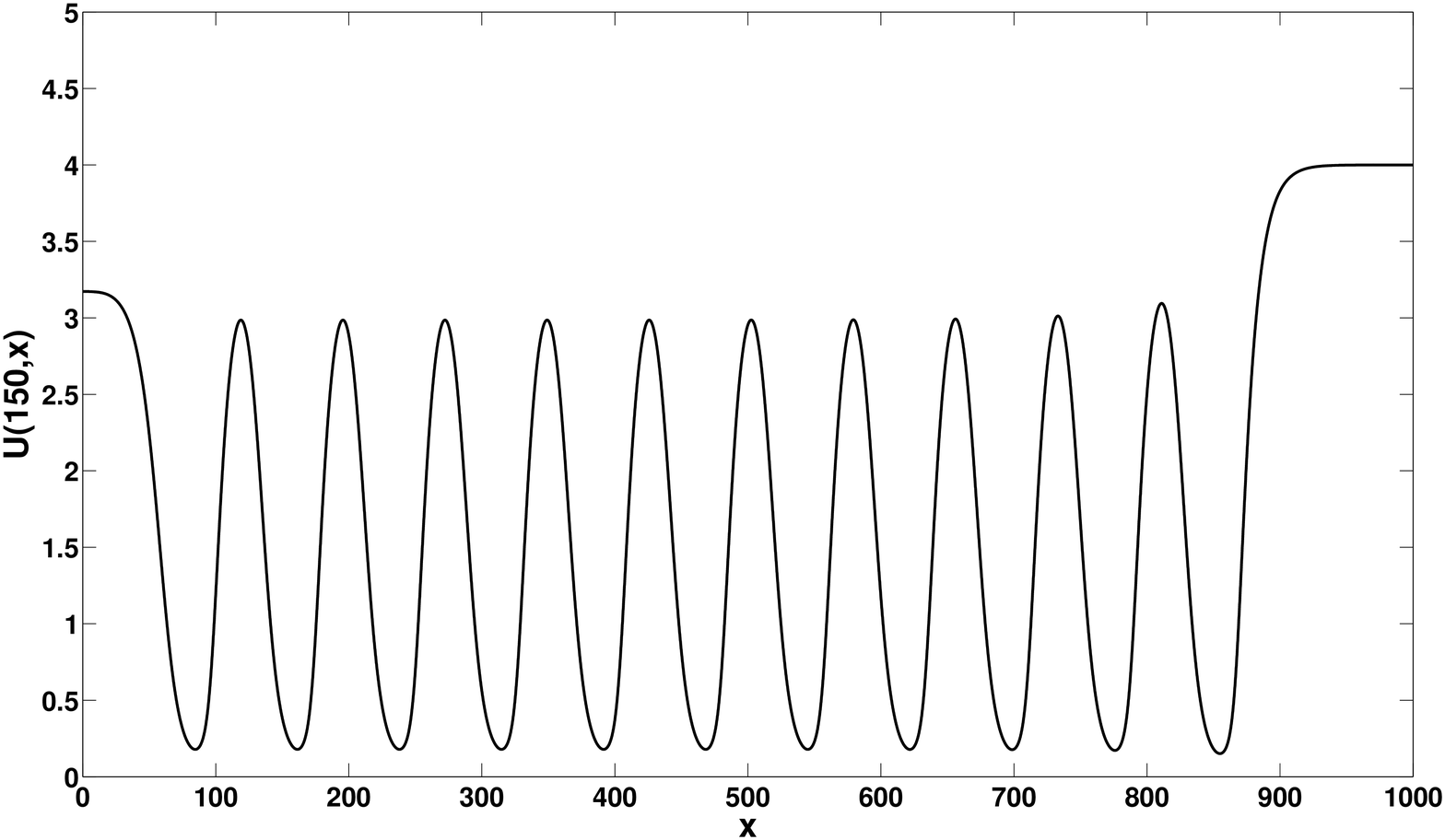}
		\includegraphics[width=2.8in,height=2in]{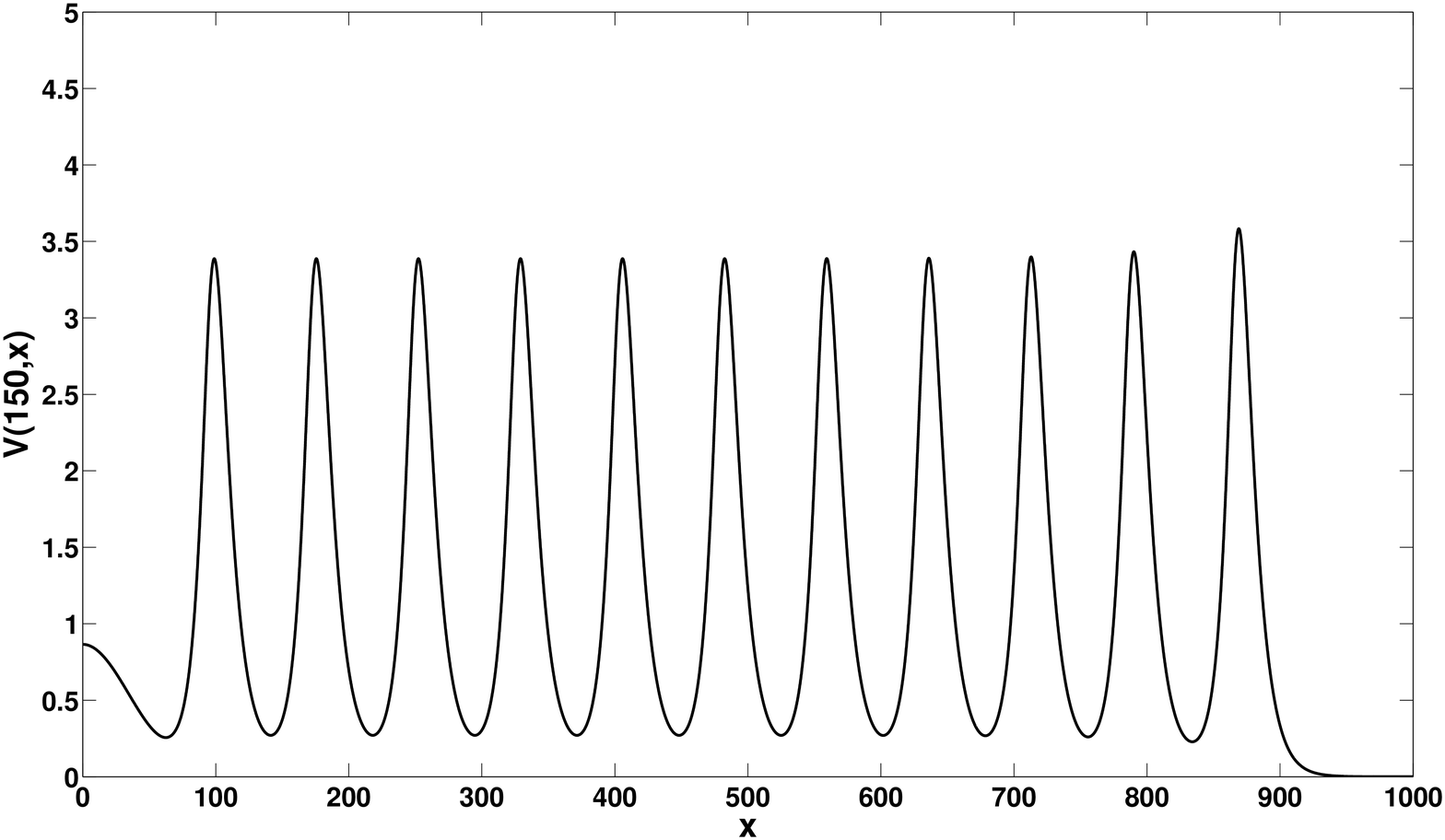}
		\caption{\textit{In  this figure we plot $U(t,x)$ (left handside) and $V(t,x)$ (right handside) whenever the parameter $\delta=0.1$ and $t=75$ (above) and $t=150$ (below). The initial distribution $U(0,x)=\gamma$ and $V(0,x)=0.1*\exp\left(-0.1 x \right)$.  We observe a traveling wave joining $(\gamma,0)$ and a periodic waves train with both predator and prey oscillating periodically.}}\label{fig2}
\end{figure}	

In Figure \ref{fig3}, we observe some more complex behaviours whenever we start from a $V(0,x)$ with $\delta=1$. The complexity in such a problem was already observed by Sherratt, Smith and Rademacher \cite{Sherratt-Smith-Rademacher} in the multi-dimensional case.

\begin{figure}[H]
		\centering
		\includegraphics[width=2.8in,height=2in]{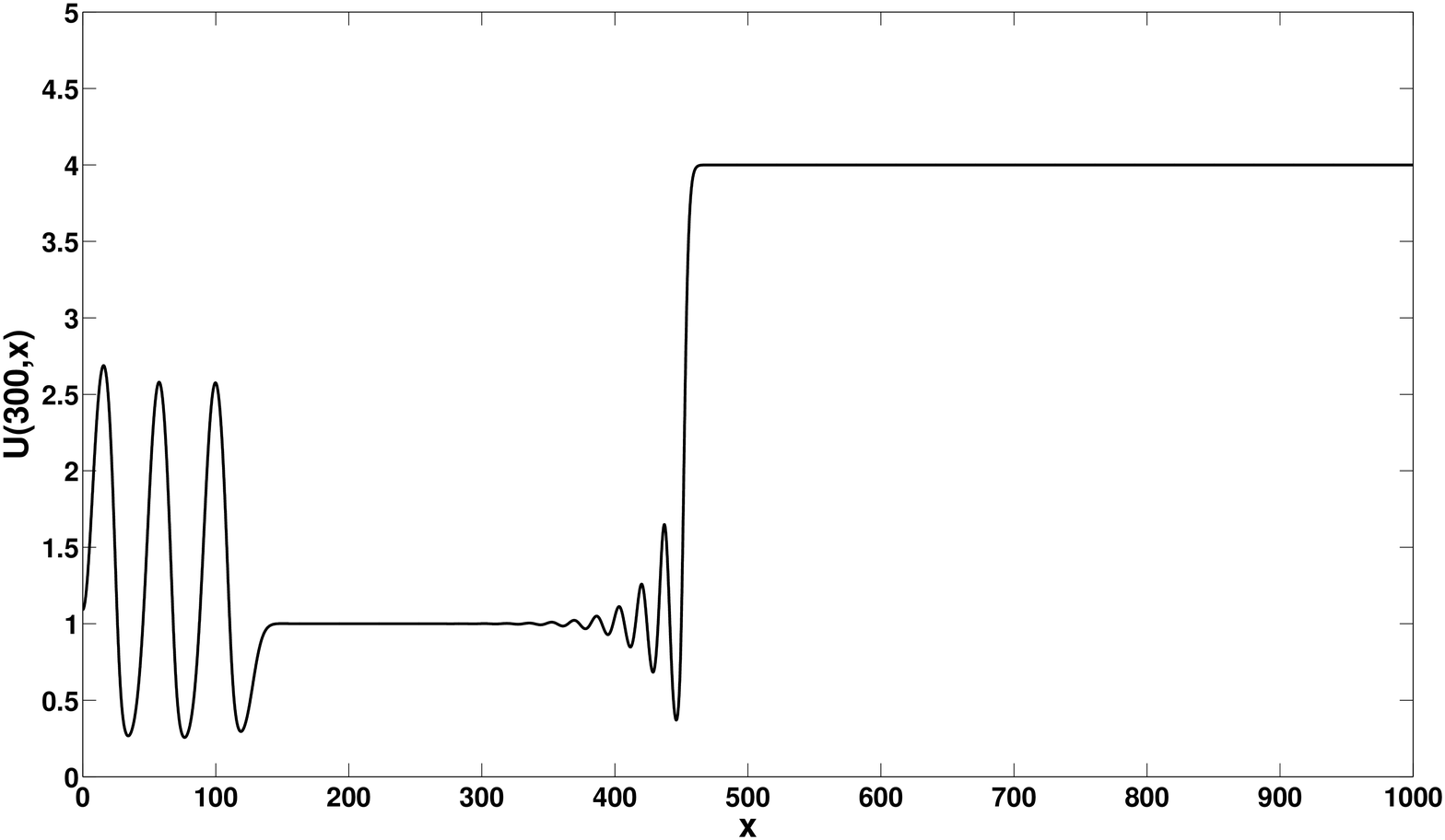}
		\includegraphics[width=2.8in,height=2in]{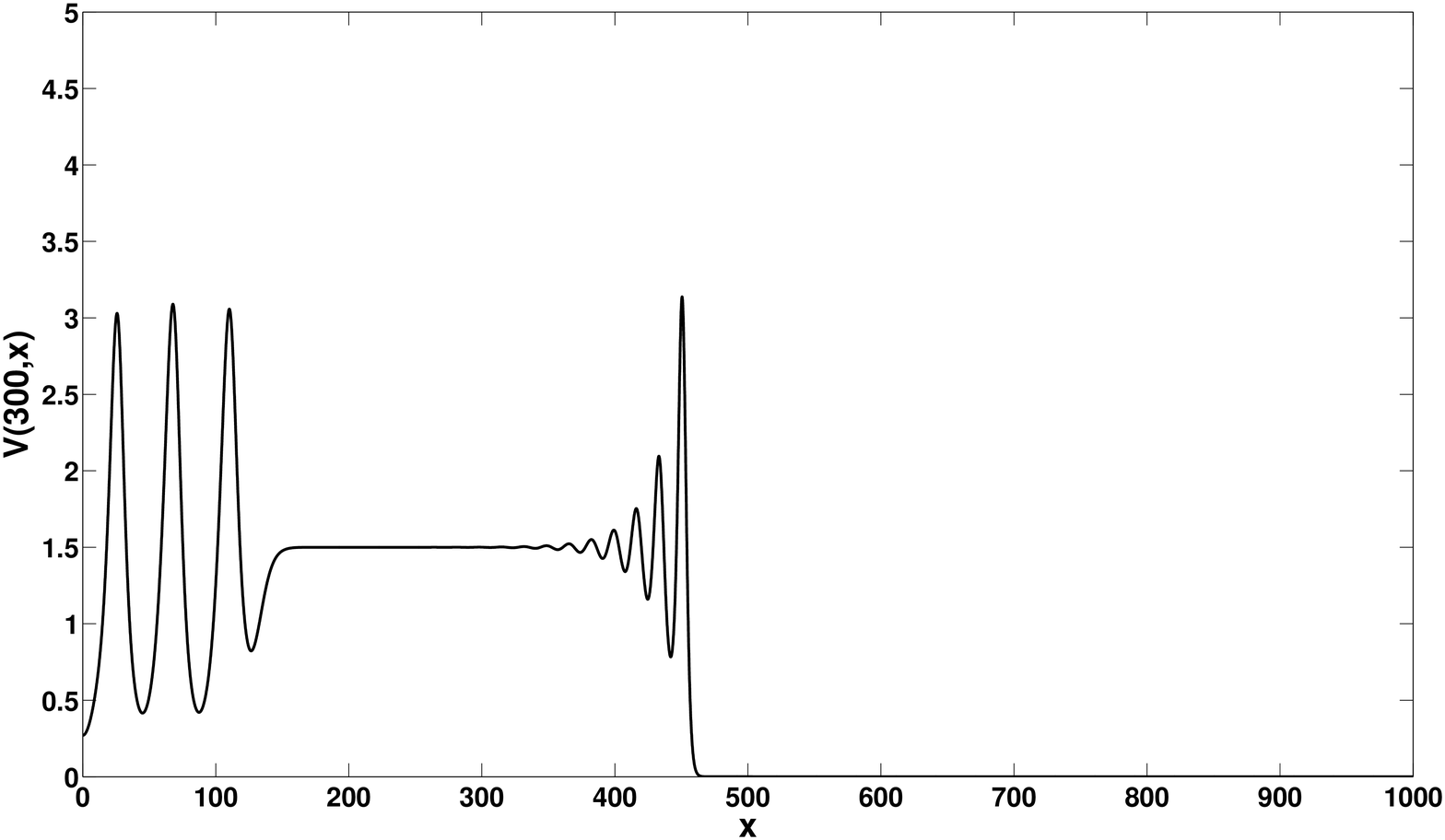}
		\includegraphics[width=2.8in,height=2in]{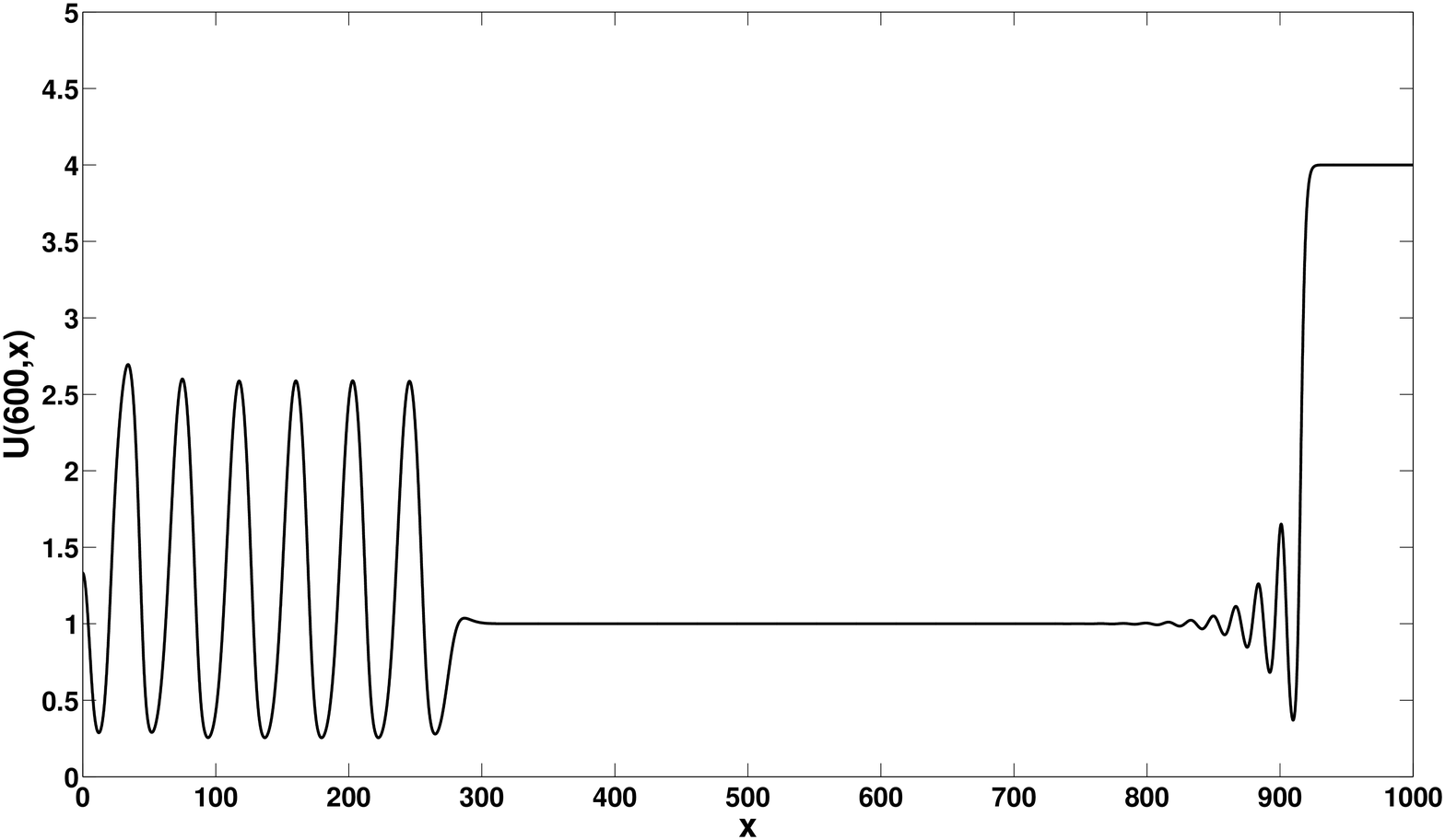}
		\includegraphics[width=2.8in,height=2in]{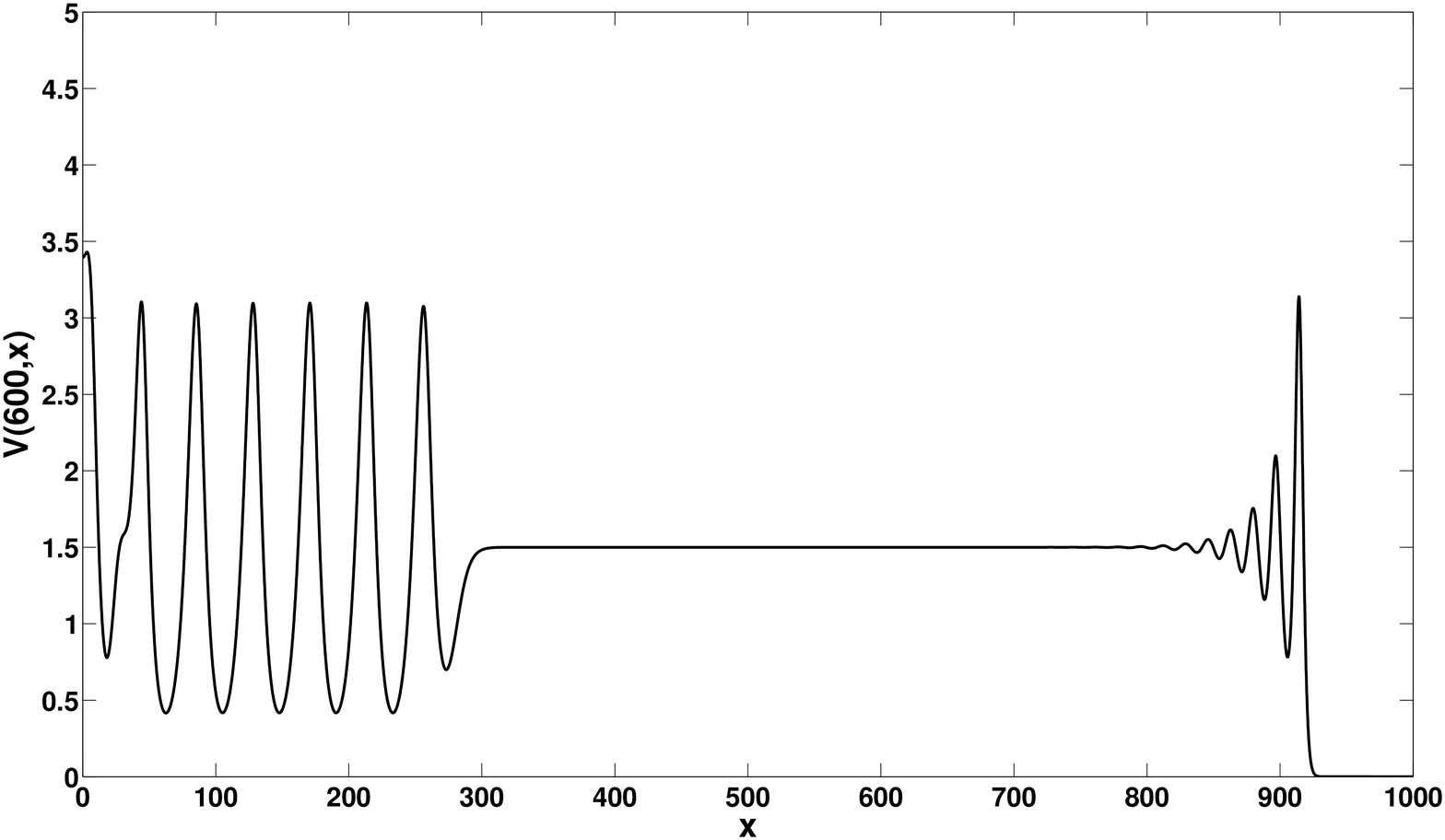}
		\caption{\textit{In this figure we plot $U(t,x)$ (left handside) and $V(t,x)$ (right handside) whenever the parameter $\delta=1$ and $t=300$ (above) and $t=600$ (below). The initial distribution $U(0,x)=\gamma$ and $V(0,x)=0.1*\exp\left(- x \right)$.  We observe a traveling wave joining $(\gamma,0)$ and the positive equilibrium superposed with a periodic traveling pulse.}}\label{fig3}
\end{figure}

%

\end{document}